\definecolor{dark-red}{rgb}{0.5,0.15,0.15}
\definecolor{dark-blue}{rgb}{0.15,0.15,0.6}
\definecolor{dark-green}{rgb}{0.05,0.8,0.05}
\newtheorem{thm}{Theorem}[section]
\newtheorem{theorem}[thm]{Theorem}
\newtheorem{corollary}[thm]{Corollary}
\newtheorem{proposition}[thm]{Proposition}
\newtheorem{lemma}[thm]{Lemma}
\newtheorem*{theorem*}{Theorem}
\newtheorem*{proposition*}{Proposition}
\newtheorem*{remark*}{Remark}
\newtheorem*{conjecture*}{Conjecture}
\theoremstyle{definition}
\newtheorem{definition}[thm]{Definition}
\newtheorem{example}[thm]{Example}
\newtheorem{construction}[thm]{Construction}
\theoremstyle{remark}
\newtheorem{remark}[thm]{Remark}
\let\c@equation\c@thm
\numberwithin{equation}{section}
\newcommand{\cpctRecollement}[3]{
\xymatrix@C=2em{{#1} \ar[r]|-{#2} & {#3}}
}
\newcommand{\recollement}[5]{
\xymatrix@C=4em{{#1} \ar@{<-}[r]|-{#2} & #3 \ar@{<-}[r]|-{#4} \ar@{<-}@<1.5ex>[l]^-{{#2}_!} \ar@{<-}@<-1.5ex>[l]_-{{#2}^*} & #5, \ar@{<-}@<1.5ex>[l]^-{{#4}!} \ar@{<-}@<-1.5ex>[l]_-{{#4}^*}
}}
\let\lim\relax
\DeclareMathOperator{\lim}{lim}
\newcommand{\F}{\mathbb{F}}
\newcommand{\N}{\mathbb{N}}
\newcommand{\Q}{\mathbb{Q}}
\newcommand{\R}{\mathbb{R}}
\newcommand{\Z}{\mathbb{Z}}
\DeclareMathOperator{\cI}{\mathcal{I}}
\DeclareMathOperator{\cJ}{\mathcal{J}}
\newcommand{\fj}{\mathfrak{j}}
\newcommand{\fm}{\mathfrak{m}}
\newcommand{\fn}{\mathfrak{n}}
\newcommand{\fp}{\mathfrak{p}}
\newcommand{\fq}{\mathfrak{q}}
\DeclareMathOperator{\Spc}{Spc}
\DeclareMathOperator{\Hom}{Hom}
\DeclareMathOperator{\Tor}{Tor}
\DeclareMathOperator{\Spec}{Spec}
\DeclareMathOperator{\supp}{supp}
\DeclareMathOperator{\CRings}{CRings}
\newcommand{\sfT}{\mathsf{T}}
\newcommand{\sfD}{\mathsf{D}}
\newcommand{\Ab}{\mathrm{Ab}}
\newsavebox\prismsym
\savebox\prismsym{\begin{tikzpicture}\draw[thick] (0,-0.1) -- (0.2,1);
\draw[thick, dotted] (-0.3,0.3) -- (0.7,0.3);
\draw[ultra thick] (0,-0.1) -- (-0.3,0.3);
\draw[ultra thick] (0,-0.1) -- (0.7,0.3);
\draw[ultra thick] (0.2,1) -- (0.7,0.3);
\draw[ultra thick] (0.2,1) -- (-0.3,0.3);\end{tikzpicture}}
\definecolor{darkBlue}{rgb}{0.0, 0.18, 0.65}
\newcommand{\idem}{\mathrm{idem}}
\newcommand{\Next}{\mathrm{next}}
\newcommand{\home}{\Hom_{\mathrm{epi}}}
\newcommand{\flate}{\Hom_{\mathrm{flat}}}
\newcommand{\Sm}{\mathsf{Sm}}
\newcommand{\Th}{\mathsf{Th}}
\newcommand{\sfI}{\mathsf{I}}
\newcommand{\sfJ}{\mathsf{J}}
\newcommand{\fraki}{\mathfrak{i}}
\newcommand{\inter}{\mathrm{Inter}}
\newcommand{\interc}{\mathrm{Inter}_{\mathrm{Ch}}}
\newcommand{\ontop}[2]{\stackrel{\textrm{\tiny #1}}{#2}} 
\newcommand{\dert}{\otimes^\mathbf{L}}
\tikzset{mymatr/.style={every outer matrix/.append style={draw=black, inner xsep=4pt , inner ysep=6pt, rounded corners, very thick}}}
\newcolumntype{P}[1]{>{\centering\arraybackslash}p{#1}}
\newsavebox{\tikzcdbox}
\title{Classifying smashing ideals in derived categories of valuation domains}
 \author{Scott Balchin}
 \address{Mathematical Sciences Research Centre, Queen's University Belfast, UK}
  \email{s.balchin@qub.ac.uk }
\author{Florian Tecklenburg}
 \address{Mathematisches Institut, Universit\"{a}t Bonn, Germany}
  \email{s6flteck@uni-bonn.de }
\date{\today}
\begin{document}
\begin{abstract}
    Building on results of Bazzoni--{\v{S}}{t}\hspace{-0.125em}{'}\hspace{-0.125em}ov\'{\i}\v{c}ek, we give a complete classification of the frame of smashing ideals for the derived category of a finite dimensional valuation domain. In particular, we give an explicit construction of an infinite family of commutative rings such that the telescope conjecture fails and which generalise an example of Keller.  As a consequence, we deduce that the Krull dimension of the Balmer spectrum and the Krull dimension of the smashing spectrum can differ arbitrarily for rigidly-compactly generated tensor-triangulated categories.
\end{abstract}

\maketitle

\section{Introduction}

Let $\sfT$ be a rigidly-compactly generated tensor-triangulated category of interest, say the derived category of a commutative ring $\sfD(R)$, the stable module category of $kG$ modules $\mathsf{StMod}(kG)$, or the stable homotopy category $\mathsf{Sp}$. Then it is a natural question to ask how much information is lost under passage to the collection of compact objects $\sfT^\omega$. One way to measure the amount of information which is lost is to identify which smashing ideals are generated by compact objects. If no information is lost, that is, if all smashing ideals are compactly generated, then one says that the \emph{telescope conjecture} holds for $\sfT$.

There are many cases where we know that the telescope conjecture holds, for example we know it holds for the derived category of a commutative Noetherian ring, for the derived category of an absolutely flat ring, for the stable module category, and the category of rational $G$-equivariant spectra where $G$ is either compact Lie or profinite \cite{neemanchromtower,stevenson_absolutelyflatrings,hrbek2024telescopeconjecturehomologicalresidue, bik11,balchin2023prismatic,balchin2024profinite}. However, the collection of examples where the telescope conjecture is known to fail is surprisingly sparse. Keller produced the first example of a commutative ring $A$ such that $\sfD(A)$ fails the telescope conjecture \cite{keller94}, and it was recently proved by Burkland--Hahn--Levy--Schlank that the telescope conjecture fails for $\mathsf{Sp}$ \cite{burklund2023ktheoretic}. Worse still, there are even fewer cases where we have an explicit description of the non-compactly generated smashing ideals when the telescope conjecture fails.  We have a description of these ideals in the case of Keller's ring, and in the non rigidly-compactly generated case, Aoki has recently provided an example of a tensor-triangulated category $\sfT$ such that the collection of non-compactly generated smashing ideals is highly exotic \cite{aoki2024smashing}. The goal of this paper is to move towards remedying this lack of explicit examples in the simple case of a derived category of a commutative ring.

More concretely, one can associate to $\sfT^\omega$ the collection of thick ideals $\Th(\sfT^\omega)$. This forms a frame which is moreover spatial \cite{kockpitsch}, and thus by Stone duality there is an associated topological space whose frame of open sets retrieves $\Th(\sfT^\omega)$. We denote this space by $\Spc(\sfT^\omega)^\vee$. This space is the Hochster dual of the Balmer spectrum of $\sfT^\omega$ \cite{balmer_spectrum, balmer_3spectra}, and has been computed in a multitude of examples. In particular, the points of this space correspond to the prime thick ideals. Moving back to the ambient large category $\sfT$, one can instead consider the collection of smashing ideals $\Sm(\sfT)$ -- which once again forms a frame \cite{bks20}. However, it is still an open question whether this frame is spatial in general \cite{balchin2023big}. In the case where this frame is indeed spatial (for example when it is finite, as it will be for us), we once again can apply Stone duality to obtain a topological space $\Spc^\mathrm{s}(\sfT)$, the \emph{smashing spectrum} of $\sfT$ whose frame of open sets is exactly $\Sm(\sfT)$. There is a frame homomorphism $\mathrm{infl} \colon \Th(\sfT) \to \Sm(\sfT)$ which inflates a thick ideal to a smashing ideal.  The statement of the telescope conjecture can be formulated by asking that this homomorphism is an isomorphism. When $\Sm(\sfT)$ is spatial, this is equivalent to asking that the induced map $\Spec(\mathrm{infl})$ is a homeomorphism of spaces. We note that $\Spec(\mathrm{infl})$ can be identified with the map that takes a prime smashing ideal $\mathsf{P}$ to the thick prime ideal $\mathsf{P} \cap \sfT^\omega$ \cite[Theorem 5.1.2]{balchin2023big}.  

In this paper we build on the work of Bazzoni--{\v{S}}{t}\hspace{-0.125em}{'}\hspace{-0.125em}ov\'{\i}\v{c}ek \cite{gldm1} to generalise the aforementioned example of Keller to an infinite family of commutative rings $A^{\star n}$, for which $\sfD(A^{\star n})$ does not satisfy the telescope conjecture, and yet we can still give a complete characterisation of the frame $\Sm(\sfD(A^{\star n}))$. In our setting, the latter is always finite and thus spatial. These examples allow us to observe new ways in which the telescope conjecture can fail. A summary of the  behaviour of the rings $A^{\star n}$ is the following, the proofs of which appear in \cref{sect:generalized}:

\begin{theorem*}
    Let $n \geq 1$. Then there is a valuation domain of Krull dimension $n$, $A^{\star n}$, such that:
    \begin{enumerate}
        \item $|\Th(\sfD(A^{\star n})^\omega)| = n + 2$.
        \item $\Spc(\sfD(A^{\star n})^\omega)^\vee \cong [n+1]$ equipped with the Alexandroff topology.
        \item $|\Sm(\sfD(A^{\star n}))| = \mathrm{Fib}_{2n+3}$ where $\mathrm{Fib}_i$ is the $i^\mathrm{th}$ Fibonacci number.
        \item 
        $
        \Spc^\mathrm{s}(\sfD(A^{\star n})) =
\begin{tikzcd}[style={every outer matrix/.append style={draw=black, inner xsep=6pt , inner ysep=9pt, rounded corners, very thick}},row sep=0.4cm,column sep=0.3cm]
c_1 && c_2 && c_3 &  \cdots & c_n && c_{n+1}\\
& o_1 \arrow[ru, rightsquigarrow] \arrow[lu, rightsquigarrow] && \arrow[ru, rightsquigarrow] \arrow[lu, rightsquigarrow]  o_2 &&{\quad \cdots \quad } \arrow[ru, rightsquigarrow] \arrow[lu, rightsquigarrow]&& \arrow[ru, rightsquigarrow] \arrow[lu, rightsquigarrow]  o_n \end{tikzcd}
        $
        
        where the $c_i$ are closed points, the $o_j$ are open points, and an arrow denotes specialization closure.
    \end{enumerate}
\end{theorem*}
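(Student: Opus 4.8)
The plan is to deduce all four assertions from the classification of smashing ideals of $\sfD(R)$ for $R$ a valuation domain due to Bazzoni--\v{S}tov\'{\i}\v{c}ek \cite{gldm1}, once the rings $A^{\star n}$ have been constructed explicitly and their prime arithmetic is understood; this is the business of \cref{sect:generalized}. Parts (1) and (2) are formal and need only that $A^{\star n}$ is a valuation domain of Krull dimension $n$. Its spectrum is then a chain $0 = \fp_0 \subsetneq \fp_1 \subsetneq \cdots \subsetneq \fp_n = \fm$ of $n+1$ primes, and each $\fp_i$ is the radical of a principal ideal --- for $i \geq 1$, of any element of $\fp_i \setminus \fp_{i-1}$ --- so each closed set $V(\fp_i)$ has quasi-compact complement. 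Since the closed sets $\emptyset \subsetneq V(\fp_n) \subsetneq \cdots \subsetneq V(\fp_0) = \Spec(A^{\star n})$ form a finite chain, every union of them is again closed, so the Thomason subsets of $\Spec(A^{\star n})$ are exactly these $n+2$ sets; Thomason's classification of thick $\otimes$-ideals of $\Perf(A^{\star n}) = \sfD(A^{\star n})^\omega$ then gives (1), and passing to the Hochster dual gives $\Spc(\sfD(A^{\star n})^\omega)^\vee \cong [n+1]$ (the chain on $n+1$ elements) with the Alexandroff topology, which is (2). Thus the Balmer side is unremarkable, and the entire content of the telescope failure lives in (3) and (4).

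For these I would first pin down the construction. The notation suggests that $A^{\star n}$ is an $n$-fold iterate of a binary ``$\star$''-operation with $A = A^{\star 1}$ Keller's rank-one, non-discrete valuation domain, arranged so that at the level of value groups $A^{\star n}$ has value group the $n$-fold lexicographic power of that of $A$. The structural facts to extract are: $\Spec(A^{\star n})$ is the chain above; each localisation $(A^{\star n})_{\fp_i}$ and quotient $A^{\star n}/\fp_i$ is again of the form $A^{\star j}$ for the appropriate $j$, so that the relevant invariants can be computed by induction on $n$; and, crucially, the non-discreteness of the value group of $A$ is inherited at each of the $n$ ``gaps'' of the chain, which is precisely what forces a non-compactly generated smashing ideal supported at that gap. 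The base case $n=1$ is (a reformulation of) Keller's original example, for which $|\Sm(\sfD(A))| = 5 = \mathrm{Fib}_5$ and $\Spc^{\mathrm{s}}(\sfD(A))$ is the ``wedge'' $c_1 \leftsquigarrow o_1 \rightsquigarrow c_2$.

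Next I would invoke \cite{gldm1}: for a valuation domain $R$ the frame $\Sm(\sfD(R))$ is described by explicit combinatorial data attached to $\Spec(R)$ together with its non-discrete primes, and when $R$ is finite dimensional this data is finite, so $\Sm(\sfD(A^{\star n}))$ is a finite frame, hence a finite distributive lattice, in particular spatial. Feeding in the arithmetic of $A^{\star n}$ from the previous step and running the induction on $n$, I expect the poset of join-irreducibles of $\Sm(\sfD(A^{\star n}))$ to come out as the fence $\mathsf{F}_n$ on $2n+1$ elements, with maximal elements $c_1, \ldots, c_{n+1}$ --- the inflations of the $n+1$ thick prime ideals --- and minimal elements $o_1, \ldots, o_n$ --- one new smashing prime per gap --- subject to $o_j < c_j$ and $o_j < c_{j+1}$ and no further relations. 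By Birkhoff's theorem (the finite case of the Stone duality recalled in the introduction) this identifies $\Spc^{\mathrm{s}}(\sfD(A^{\star n}))$ with $\mathsf{F}_n$ and its Alexandroff topology, giving (4); and then (3) follows, since $|\Sm(\sfD(A^{\star n}))|$ equals the number of down-sets of $\mathsf{F}_n$, and the zigzag poset on $2n+1$ elements has the classical Fibonacci count $\mathrm{Fib}_{(2n+1)+2} = \mathrm{Fib}_{2n+3}$ (by the evident recursion relating $\mathsf{F}_n$ to $\mathsf{F}_{n-1}$). As a consistency check, $\mathrm{infl}$ then realises the $(n+2)$-element chain $\Th(\sfD(A^{\star n})^\omega)$ as a non-surjective sublattice of the $\mathrm{Fib}_{2n+3}$-element lattice $\Sm(\sfD(A^{\star n}))$, so the telescope conjecture fails for $\sfD(A^{\star n})$.

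The main obstacle is the middle step. Building ``$\star$'' so that it iterates cleanly is routine, but computing the Bazzoni--\v{S}tov\'{\i}\v{c}ek data for $A^{\star n}$ is delicate: one must show that exactly one new smashing prime appears at each gap of $\Spec(A^{\star n})$, that each such prime is comparable only to the two adjacent inflated thick primes (so that the resulting poset is the fence and not something else), and that nothing unexpected happens at the two ends of the chain or under the localisation and quotient reductions driving the induction. By contrast, the passage from the poset $\mathsf{F}_n$ to the Fibonacci count and to the diagram in (4) is straightforward.
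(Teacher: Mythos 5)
Your proposal is correct in outline and takes a genuinely different route in two places. Parts (1) and (2) are handled as in the paper. For (3) and (4), the paper proceeds in the opposite order from you: it first identifies $\interc(A^{\star n})$ explicitly (\cref{lem:intervals_keller=n}) and counts it directly, via the binomial identity $\sum_{k=0}^{n+1}\binom{n+k+1}{2k}=\mathrm{Fib}_{2n+3}$ (\cref{cor:counting}), and only then determines the meet-primes and the topology by classifying minimal factorizations (the subset and gluing rules, \cref{prop:minimal_fact}, \cref{thm:smashing_spectrum_n}). You instead propose to establish the fence poset $\mathsf{F}_n$ of join-irreducibles first and deduce the cardinality afterwards by Birkhoff's theorem; this is a valid and arguably cleaner derivation of (3) from (4), though it requires (4) to be fully proved before (3) becomes available, whereas the paper's count is self-contained. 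You also propose to run an induction on $n$ using the reductions $(A^{\star n})_{\fp_i}$ and $A^{\star n}/\fp_i$; the paper does no such induction but argues directly with chains in $\interc(A^{\star n})$. The inductive strategy is plausible --- the value group of a localisation or quotient at $\fp_i$ is again a lexicographic power of $\Z[1/\ell]$, so those rings are of the form $A^{\star j}$ --- but it would need some bookkeeping to glue the local pictures back together, which the paper's direct argument avoids.

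One conceptual imprecision worth flagging, since it could derail the ``delicate'' middle step you identify: you describe the closed points $c_1,\ldots,c_{n+1}$ as ``the inflations of the $n+1$ thick prime ideals.'' That is not what they are. In chain notation the closed points correspond to the singleton chains $\{[\fp_j,\fp_j]\}$ (i.e.\ to the residue fields $k(\fp_j)$), while the inflations of the thick primes correspond to the chains $\{[(0),\fp_j]\}$ (i.e.\ to the localisations $(A^{\star n})_{\fp_j}$). The latter are spectrum points only at the extremes: $\{[(0),(0)]\}$ is a closed point, $\{[(0),\fp_1]\}$ is an open point, and for $j\geq 2$ the chain $\{[(0),\fp_j]\}$ is a non-trivial meet in the frame and so is not a point of $\Spc^{\mathrm{s}}$ at all. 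The comparison map $\Spc^{\mathrm{s}}\to\Spc(\cdot)^\vee$ sends both $\{[\fp_{j-1},\fp_j]\}$ and $\{[\fp_j,\fp_j]\}$ to the same thick prime, and this two-to-one collapse is exactly the telescope failure. If you carry the mis-identification into the inductive step, the relations in the candidate fence poset will not come out right, so this is worth correcting before attempting the computation.
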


We highlight that by these results, we are able to generate examples where there are arbitrarily more smashing ideals than thick ideals, and more interestingly still, examples where the Balmer spectrum has arbitrarily high Krull dimension while the smashing spectrum is always one-dimensional.

Further still, we can extend the above theorem to fully classify those frames $\Sm(\sfD(R))$ where $R$ is a finite dimensional valuation domain. The rings $A^{\star n}$ play a special role in this classification, in particular we will prove the following: 

\begin{theorem*}[\cref{thm:mixed_subframe}]
Let $R$ be an $n$-dimensional valuation domain. Then $\Sm(\sfD(R))$ is an explicit subframe of $\Sm(\sfD(A^{\star n}))$.
\end{theorem*}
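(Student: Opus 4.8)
The strategy is to combine the explicit combinatorial classification of $\Sm(\sfD(R))$ for a valuation domain $R$ — obtained above on the basis of Bazzoni--\v{S}\v{t}ov\'{\i}\v{c}ek \cite{gldm1} — with the description of $\Sm(\sfD(A^{\star n}))$ and $\Spc^{\mathrm{s}}(\sfD(A^{\star n}))$ established in \cref{sect:generalized}, and to realise the former as the frame of opens pulled back along a canonical collapse of smashing spectra. First I would isolate the combinatorial data: since $R$ has Krull dimension $n$, its spectrum is a finite chain $\fp_0 \subsetneq \fp_1 \subsetneq \cdots \subsetneq \fp_n$, and for each $i \in \{1,\dots,n\}$ the $i$-th jump $R_{\fp_i}/\fp_{i-1}R_{\fp_i}$ is a rank-one valuation domain; let $I_R \subseteq \{1,\dots,n\}$ be the set of indices $i$ for which this jump has idempotent maximal ideal, equivalently for which the telescope conjecture fails locally at step $i$. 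By the classification, $\Sm(\sfD(R))$ depends only on $(n, I_R)$ and is isomorphic to an explicit frame $\mathsf F(n, I_R)$ whose elements are pairs $(X, X')$ of specialization-closed subsets of the chain $\{0 < 1 < \cdots < n\}$ with $X \subseteq X'$ and with the discrepancy $X' \setminus X$ constrained to lie over $I_R$. By construction of $A^{\star n}$, every jump of $A^{\star n}$ has idempotent maximal ideal, so $I_{A^{\star n}} = \{1,\dots,n\}$ and $\Sm(\sfD(A^{\star n})) \cong \mathsf F(n,\{1,\dots,n\})$ is the largest frame of this shape.

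Since all these frames are finite, hence spatial, it suffices to produce a surjective continuous map $q \colon \Spc^{\mathrm{s}}(\sfD(A^{\star n})) \twoheadrightarrow \Spc^{\mathrm{s}}(\sfD(R))$: then $q^{-1}$ is automatically an injective frame homomorphism — it preserves arbitrary joins and finite meets because it is a preimage map, and is injective because $q$ is surjective — so its image is a subframe of $\Sm(\sfD(A^{\star n}))$ isomorphic to $\Sm(\sfD(R))$. Using the zigzag description of $\Spc^{\mathrm{s}}(\sfD(A^{\star n}))$ (closed points $c_1,\dots,c_{n+1}$, open points $o_1,\dots,o_n$, specializations $c_j \leftsquigarrow o_j \rightsquigarrow c_{j+1}$) and the analogous description of $\Spc^{\mathrm{s}}(\sfD(R))$ coming from $\mathsf F(n, I_R)$, I would take $q$ to be the identity over the indices in $I_R$ and to contract, for each maximal run of consecutive indices $j \notin I_R$, the corresponding sub-fence — the $c_j$'s over that run together with the intervening open points — onto a single closed point. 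Continuity and surjectivity are immediate from the Alexandroff structure of these finite spaces, and one checks that $q$ coincides with the map of smashing spectra underlying the localization functors that witness the local-to-global assembly of $\Sm(\sfD(R))$.

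It then remains to make the image explicit: $q^{-1}\bigl(\Sm(\sfD(R))\bigr)$ is the subframe of $\Sm(\sfD(A^{\star n}))$ consisting of those opens $U$ of the zigzag satisfying, at each $j \notin I_R$, the closure condition that $o_j \in U$ whenever $c_j \in U$ or $c_{j+1} \in U$ — reflecting that no genuine splitting can occur at a non-idempotent jump — and comparison with the pair-description of $\mathsf F(n, I_R)$ shows that $q^{-1}$ is an isomorphism onto it. The main obstacle lies here and in the identification of $q$ used above: one must verify that the frame operations on $\Sm(\sfD(R))$, in particular joins — which are computed by generating a smashing subcategory and can a priori jump above the naive union — are transported correctly by $q^{-1}$, and that a non-idempotent jump really does force the two members of each pair to coincide there. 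Both come down to careful bookkeeping inside the classification of \cite{gldm1}, now keeping track of both members of each pair at once.
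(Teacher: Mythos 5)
Your overall strategy — realise the frame embedding as $q^{-1}$ for a surjective spectral map $q\colon \Spc^{\mathrm{s}}(\sfD(A^{\star n})) \twoheadrightarrow \Spc^{\mathrm{s}}(\sfD(R))$ — is genuinely different from the paper's argument, which never passes through Stone duality: the paper constructs a ring extension $\varphi_{\idem}\colon A_{\idem}\hookrightarrow A^{\star n}$ via functoriality of the valuation-domain construction on value groups (\cref{lemma:functorialityVDconstruction}), checks it is bijective and order-preserving on Zariski spectra, and then shows directly that the induced inclusion $\interc(A_{\idem})\hookrightarrow\interc(A^{\star n})$ preserves the meet and join constructions of \cref{lem:constructing_new_chains}. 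That approach needs no prior knowledge of what $\Spc^{\mathrm{s}}(\sfD(R))$ looks like, whereas yours would need \cref{thm:smashing_spectrum_mixed} already in hand to even state $q$ correctly.

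There are two concrete gaps. First, the frame $\mathsf F(n,I_R)$ of pairs $(X,X')$ with $X'\setminus X$ lying over $I_R$ does not match $\Sm(\sfD(R))\cong\interc(R)\cup\{\varnothing\}$: for $n=2$ with only $\fp_0=(0)$ and $\fp_2=\fm$ idempotent, $\interc(R)$ has $6$ elements (four singletons $\{[(0),(0)]\},\{[(0),\fp_1]\},\{[(0),\fm]\},\{[\fm,\fm]\}$ plus two pairs), so $|\Sm(\sfD(R))|=7$, whereas the pair-description gives only $5$ elements. Second, and more seriously, collapsing an entire sub-fence $\{c_{j},o_{j},\dots,c_{j'}\}$ onto a single closed point produces a space that is still a height-one zigzag, but by \cref{thm:smashing_spectrum_mixed} the target $\Spc^{\mathrm{s}}(\sfD(R))$ has Krull dimension $d+1$, where $d$ is the length of the longest run of consecutive non-idempotent primes; in \cref{ex:mixed} (with $n=5$ and only $\fp_0,\fp_4$ idempotent) it contains a specialisation chain of length five. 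The correct quotient map must \emph{create} these chains, and what actually works is the finer identification that glues each open point $o_j=\{[\fp_{j-1},\fp_j]\}$ to its single closed neighbour $c_j=\{[\fp_j,\fp_j]\}$ precisely when $\fp_j$ is non-idempotent (echoing the comparison map of \cref{lem:intervals_keller=n}'s corollary but applied only at the non-idempotent indices); one can check this reproduces the chains of \cref{thm:smashing_spectrum_mixed}. Both repairs require the detailed combinatorial bookkeeping of \cref{prop:minimal_fact_mixed} and \cref{thm:smashing_spectrum_mixed}, which is exactly what your sketch defers; once those are in place, the paper's direct frame-theoretic argument via $\varphi_{\idem}$ is shorter and cleaner.
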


In addition to this theorem, we moreover provide a description of the corresponding spectral space via Stone duality in \cref{thm:smashing_spectrum_mixed}.

In \cref{sect:VD} we introduce the relevant theory regarding valuation domains. In particular, we recall how to construct a valuation domain from a totally ordered group. This then leads to \cref{prop:constructionOnRingLevel}, where we introduce our main tool that allows an explicit description of the rings $A^{\star n}$. Continuing  in \cref{sect:stovicek}, we  recall some key results of \cite{gldm1}, which give tight control over the frame of smashing ideals in the derived category of a valuation domain. In \cref{sect:generalized} we come to our promised construction of $A^{\star n}$ and prove the aforementioned results. Finally, in \cref{sect:mixing} we move to the case of an arbitrary finite dimensional valuation domain.

We refer the reader to \cite{balmer_spectrum, balchin2023big, kockpitsch} for the relevant notions from tensor-triangular geometry that we will use throughout, and to \cite{PPframes} for the theory of frames and Stone duality. We will freely use the fact that for a commutative ring $R$, the Zariski spectrum of $R$ is homeomorphic to the Balmer spectrum of $\sfD(R)^\omega$ \cite{neemanchromtower, thomasonclassification}.
 
\subsection*{Acknowledgements}

The first author would like to thank the Max Planck Institute for Mathematics for its hospitality where some of this work was undertaken. The authors also thank Jordan Williamson for helpful comments on a preliminary draft of this paper. Parts of this work were established in the Master's thesis of the second author at the University of Bonn.

\section{Valuation domains from totally ordered groups}\label{sect:VD}

\subsection{The construction and link between the structures}

Before constructing a valuation domain with prescribed value group, we recall the results that we will need from \cite{fs01}.

\begin{definition}\label{def:valuations}
An abelian group $(G,+)$ is {\it totally ordered} if the underlying set $G$ is totally ordered with order $\le$, and the group operation is monotone with respect to this order. Denote by $G^+\coloneqq\{g\in G\mid g\ge0\}$ the {\it positivity domain} of $G$.  

A \textit{morphism of totally ordered groups} is an order-preserving group homomorphism. We will denote by $\Ab_\le$ the category of totally ordered abelian groups. 
\end{definition}

\begin{definition}
A commutative domain $R$ is a {\it valuation domain} if its poset of ideals is totally ordered. 

For a totally ordered group $(G,+,\le)$ and a field $Q$, a map $\nu\colon Q\to G\cup\{\infty\}$ is said to be a {\it valuation} if the following holds:
\begin{enumerate}
\item $\nu(x)=\infty$ for $x\in Q$ if and only if $x=0$.
\item $\nu(xy)=\nu(x)+\nu(y)$ for all $x,y\in Q$.
\item $\nu(x+y)\ge\mathrm{min}(\nu(x),\nu(y))$ for all $x,y\in Q$.
\end{enumerate}

Denote by $R_\nu\coloneqq\{x\in Q\mid \nu(x)\ge0\}$ the {\it valuation ring of} $\nu$. It defines a valuation domain with unique maximal ideal $\fm_\nu\coloneqq\{x\in Q\mid \nu(x)>0\}$. We call $G$ the {\it value group} of $R_\nu$. 

Two valuations $\nu,\mu$ over the same field with value groups $G,H$ are {\it equivalent} if there exists an isomorphism of totally ordered groups $f\colon G\to H$, such that $\mu=f\circ \nu$ (where $\infty$ is fixed by $f$).
\end{definition}

The following classical result gives a full characterization of valuation domains.

\begin{proposition}[{\cite[\S II, Theorem 3.1]{fs01}}]\label{prop:existenceofvaluation}
    Every valuation domain $R$ is canonically isomorphic to the valuation ring $R_\nu$, for some valuation $\nu$ of its quotient field $Q=R_{(0)}$. 
\end{proposition}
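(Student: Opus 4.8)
The plan is to manufacture the value group $G$ and the valuation $\nu$ directly out of $R$ and its fraction field $Q=R_{(0)}$, and then to check that $R_\nu$ recovers $R$ exactly as subrings of $Q$, so that the asserted canonical isomorphism is simply the inclusion $R\hookrightarrow R_\nu$, which is then an equality.

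The first step is to record the pointwise reformulation of the hypothesis: a commutative domain $R$ with fraction field $Q$ has totally ordered poset of ideals if and only if for every $x\in Q^\times$ at least one of $x,x^{-1}$ lies in $R$. For the forward direction, write $x=a/b$ with $a,b\in R\setminus\{0\}$; the principal ideals $(a),(b)$ are comparable, and $(a)\subseteq(b)$ forces $a=bc$ with $c\in R$, hence $x=c\in R$, while $(b)\subseteq(a)$ symmetrically gives $x^{-1}\in R$. For the converse, given ideals $I\not\subseteq J$ pick $a\in I\setminus J$ (note $a\neq0$); for any $b\in J$, if $b\neq0$ then $a/b\notin R$ (otherwise $a=(a/b)b\in J$), so $b/a\in R$ and $b=(b/a)a\in I$, whence $J\subseteq I$. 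This is the only place the valuation-domain hypothesis is used.

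Next I would define $G\coloneqq Q^\times/R^\times$, written additively with class map $x\mapsto[x]$, and declare $[x]\le[y]$ iff $y/x\in R$. Then I would verify, in order: well-definedness (independence of representatives, since $R^\times$ is precisely $\{u: u,u^{-1}\in R\}$); that $\le$ is a partial order (reflexivity from $1\in R$, transitivity from multiplicative closure of $R$, antisymmetry because $y/x,x/y\in R$ forces $y/x\in R^\times$); totality, which is immediate from the reformulation of the first step; and translation-invariance $[x]\le[y]\Rightarrow[xz]\le[yz]$, since $(yz)/(xz)=y/x$. Hence $G\in\Ab_\le$ with positivity domain $G^+$ the image of $R\setminus\{0\}$. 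Now set $\nu\colon Q\to G\cup\{\infty\}$ by $\nu(0)=\infty$ and $\nu(x)=[x]$ for $x\neq0$; axiom (1) holds by definition, axiom (2) is multiplicativity of $x\mapsto[x]$ with the usual conventions at $\infty$, and for axiom (3) one may assume $x,y$ and $x+y$ are all nonzero and, after relabelling, $\nu(x)\le\nu(y)$, i.e. $y/x\in R$; then $(x+y)/x=1+y/x\in R$, so $\nu(x+y)\ge\nu(x)=\min(\nu(x),\nu(y))$. Finally $R_\nu=\{x\in Q:\nu(x)\ge0\}=\{0\}\cup\{x\in Q^\times: x\in R\}=R$, so the inclusion $R\hookrightarrow R_\nu$ is an equality and $G$ is the value group, and one checks likewise that $\fm_\nu$ is the maximal ideal of $R$.

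I expect the only genuinely delicate point to be the equivalence in the first step — converting "poset of ideals totally ordered" into the pointwise statement about $Q^\times$ — together with the bookkeeping of the zero-and-$\infty$ edge cases in axiom (3); everything else is the routine observation that $Q^\times/R^\times$ carries an evident total order making $R$ its own ring of non-negative elements.
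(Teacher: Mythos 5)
Your proof is correct, and it is the standard construction one finds in the cited reference [FS01, \S II]; the paper itself gives no proof and simply quotes the result. The key ingredients are exactly as you lay them out: the pointwise characterisation of valuation domains ($x\in R$ or $x^{-1}\in R$ for every $x\in Q^\times$), the definition of the value group as $G=Q^\times/R^\times$ ordered by divisibility in $R$, and the observation that the quotient map becomes the desired valuation with $R_\nu=R$ on the nose. The only place the hypothesis is used is, as you flag, to get totality of the order; the remaining verifications (well-definedness modulo $R^\times$, order axioms, translation-invariance, valuation axioms, $R_\nu=R$) are all formal. The zero-and-$\infty$ bookkeeping in axiom (3) is handled correctly by the usual conventions. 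In short: a correct and essentially canonical proof, matching the source the paper cites.
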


Conversely, it is possible to construct valuation domains with a prescribed value group. We provide the proof of this result as the construction will be central to our results.

\begin{proposition}[{\cite[\S1]{kru32}}]\label{prop:prescribed}
Let $k$ be a field and $(G,+,\le)$ be a totally ordered group. Then there exists a valuation domain $R_\nu$ with valuation $\nu$, residue field $R_\nu/\fm_\nu\cong k$, and a value group which is order-isomorphic to $G$.
\end{proposition}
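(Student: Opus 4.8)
The plan is to construct the valuation ring explicitly as a ring of ``generalized power series'' (a Hahn-type construction). First I would form the group ring $k[G]$, or rather work inside the field $Q$ of fractions of a suitable subring: concretely, let $Q$ be the Malcev--Neumann series field $k((G))$ consisting of formal sums $f = \sum_{g \in G} a_g t^g$ whose support $\{g \in G \mid a_g \neq 0\}$ is a \emph{well-ordered} subset of $G$. One checks that $k((G))$ is a field: addition and multiplication are defined termwise/by convolution (the well-ordered support condition guarantees each coefficient of a product is a finite sum), and the key point is that a nonzero series is invertible, which one proves by factoring out the lowest term $a_{g_0} t^{g_0}$ so that the remainder is $1 - u$ with $u$ having support in $G^{+} \setminus \{0\}$, then inverting via the Neumann lemma that $\sum_{n \geq 0} u^n$ has well-ordered support and converges coefficientwise. (If $G$ is merely totally ordered but one only needs the statement, it suffices to take the subfield generated by $k$ and $\{t^g\}$, but the Hahn field is the cleanest ambient object.)

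Next I would define $\nu \colon Q \to G \cup \{\infty\}$ by $\nu(0) = \infty$ and $\nu(f) = \min(\supp f)$ for $f \neq 0$ — this minimum exists precisely because supports are well-ordered. Then I verify the three valuation axioms: (1) is immediate from the definition; (2) $\nu(fg) = \nu(f) + \nu(g)$ follows because the lowest-order term of a product is the product of the lowest-order terms (and $k$ being a field, hence a domain, ensures that term is nonzero); (3) $\nu(f+g) \geq \min(\nu(f), \nu(g))$ is clear since $\supp(f+g) \subseteq \supp f \cup \supp g$. Having established that $\nu$ is a valuation with value group exactly $G$ (surjectivity onto $G$: $\nu(t^g) = g$), I invoke the fact recorded in the paragraph after \cref{prop:existenceofvaluation} that $R_\nu = \{f \in Q \mid \nu(f) \geq 0\}$ is a valuation domain with maximal ideal $\fm_\nu = \{f \mid \nu(f) > 0\}$.

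Finally I would identify the residue field: the reduction map $R_\nu \to R_\nu/\fm_\nu$ sends $f = \sum a_g t^g \in R_\nu$ to the class of its constant term $a_0$, which gives a ring homomorphism $k \hookrightarrow R_\nu \twoheadrightarrow R_\nu/\fm_\nu$; it is surjective because every element of $R_\nu$ differs from its constant term by an element of $\fm_\nu$, and injective because $k$ is a field and the composite is nonzero. Hence $R_\nu/\fm_\nu \cong k$, completing the proof. The main obstacle — the only genuinely nontrivial point — is establishing that $k((G))$ is a field, i.e.\ that the set of well-ordered subsets of a totally ordered group is closed under the operations needed (finite unions, the subset relation on supports of products, and the support of $\sum u^n$), which is exactly Neumann's lemma; everything else is bookkeeping with lowest-order terms. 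If one wishes to avoid the Hahn-field machinery entirely, an alternative is to quote the existence of a field with a surjective valuation of prescribed group directly from \cite{fs01} or \cite{kru32} and only carry out the residue-field computation, but since the construction is advertised as ``central to our results'' I would present the explicit series model.
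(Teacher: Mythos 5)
Your proof is correct but takes a genuinely different construction from the paper's. You build the Hahn (Malcev--Neumann) field $k((G))$ of formal series with well-ordered support, on which the valuation $\nu(f) = \min(\supp f)$ is defined directly; the hard work is absorbed into Neumann's lemma showing that $k((G))$ is in fact a field. The paper instead takes the group ring $S = k[t^G]$ of \emph{formal polynomials} (finite support), sets $\nu(f) = \min_{a_i \neq 0} g_i$ on $S$, passes to the fraction field $Q = S_{(0)}$, and extends $\nu$ by $\nu(f/g) = \nu(f) - \nu(g)$; the work is then a routine verification that the inequality $\nu(x+y) \ge \min(\nu(x),\nu(y))$ survives the passage to fractions, avoiding any appeal to well-ordered supports or Neumann's lemma.

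Both arguments are valid and yield the same statement. The trade-off is that your Hahn-field route is more self-contained once Neumann's lemma is granted (the valuation is total on a single explicit field, and the residue field is literally the constant coefficient), but it imports nontrivial transfinite machinery. The paper's route is more elementary (no well-ordered supports, just finite sums and a bookkeeping computation for fractions), which is why the paper can present it in a few lines. One further remark: you suggest presenting the "explicit series model" because the construction is said to be "central to our results," but what the paper actually relies on downstream (e.g.\ in \cref{prop:constructionOnRingLevel} and \cref{lemma:functorialityVDconstruction}) is precisely the \emph{polynomial} model $k[t^G]$ and its fraction field --- the functoriality $Q(f)$, the decomposition of $Q(G \oplus H)$ through $Q_{Q(H)}(G)$, and so on are all stated in terms of finite sums $\sum a_i t^{g_i}$. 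So while the Hahn field gives a correct proof of the Proposition as stated, substituting it here would force rewrites of the later functoriality arguments; the polynomial model is the right one to carry forward.
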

\begin{proof}
Consider $S=k[t^G]$, the ring consisting of all formal polynomials $f(t)=\sum_{i=0}^na_it^{g_i}$, where $a_i\in k$, $g_i\in G$, and the usual ring structure for polynomials. In particular, $t^0$ is identified with $1\in k$ and for any $g,h\in G$ we have $t^g\cdot t^h=t^{g+h}$. Then $S$ is an integral domain, and we claim its field of fractions $Q=S_{(0)}$ can be endowed with a valuation $\nu\colon Q\to G\cup\{\infty\}$ as follows: For $f\in S$ as expressed above set
\[
\nu(f/1)\coloneqq\min_{a_i\neq0}g_i,
\]
and for a fraction $f/g\in Q$ set $\nu(f/g)\coloneqq\nu(f/1)-\nu(g/1)$. It is immediate that this map attains infinity if and only if the fraction is zero. By definition of $\nu$ we obtain for $f,g\in S$ the equality $\nu(f/1\cdot g/1)=\nu(f/1)+\nu(g/1)$. Combining this equality with the definition of $\nu$ on  $Q$, \Cref{def:valuations}(2) is immediate. Moreover, since adding polynomials simply adds their coefficients, one checks that for every $f,g\in S$ that $\nu(f/1+g/1)\ge\min(\nu(f/1),\nu(g/1))$. Using this inequality, we deduce for $f/g,u/v\in Q$:
\begin{align*}
\nu\left(\frac{f}{g}+\frac{u}{v}\right)&=\nu\left(\frac{fv+ug}{gv}\right)=\nu\left(\frac{fv}{1}+\frac{ug}{1}\right)-\nu\left(\frac{gv}{1}\right)\ge\min\left(\nu\left(\frac{fv}{1}\right),\nu\left(\frac{ug}{1}\right)\right)-\nu\left(\frac{gv}{1}\right)\\
&=\min\left(\nu\left(\frac{f}{1}\right)+\nu\left(\frac{v}{1}\right),\nu\left(\frac{u}{1}\right)+\nu\left(\frac{g}{1}\right)\right)-\nu\left(\frac{g}{1}\right)-\nu\left(\frac{v}{1}\right)\\
&=\min\left(\nu\left(\frac{f}{1}\right)-\nu\left(\frac{g}{1}\right),\nu\left(\frac{u}{1}\right)-\nu\left(\frac{v}{1}\right)\right)=\min\left(\nu\left(\frac{f}{g}\right),\nu\left(\frac{u}{v}\right)\right).
\end{align*}
Thus $\nu\colon Q\to G\cup\{\infty\}$ defines a valuation. Hence, $R_\nu=\{x\in Q\mid \nu(x)\ge0\}$ is a valuation domain with maximal ideal $\fm_\nu=\{x\in Q\mid \nu(x)>0\}$. The residue field $R_\nu/\fm_\nu$ is then isomorphic to $k$, and the value group of $R_\nu$ is $G$ as desired.
\end{proof}
\begin{definition}
    We will denote the constructed valuation domain of \Cref{prop:prescribed} by $R_k(G)$, its associated field of fractions by $Q_k(G)$, and the corresponding valuation by $\nu_{Q(G)}$. If there is no danger of confusion about the residue field, we shall omit it, i.e., write $R(G)$ and $Q(G)$.
\end{definition}

The next result tells us that the construction of $R(G)$ is functorial in the appropriate sense.

\begin{lemma}\label{lemma:functorialityVDconstruction}
Let $f\colon G\to H$ be a morphism of totally ordered abelian groups, and fix some field $k$. Then the construction of \Cref{prop:prescribed} is functorial and natural in the following sense: The map 
\begin{align*}
Q(f)\colon Q(G)&\to Q(H)\\[2pt]
\frac{\sum_{i=0}^na_it^{g_i}}{\sum_{i=0}^mb_it^{h_i}}&\mapsto\frac{\sum_{i=0}^na_it^{f(g_i)}}{\sum_{i=0}^mb_it^{f(h_i)}}
\end{align*}
is a morphism of fields and restricts to a well-defined morphism of rings between $R(G)$ and $R(H)$. Hence, there is a functor $R\colon\Ab_\le\to\CRings$. Moreover, let $\nu$ (resp., $\mu$) be the valuation constructed through \Cref{prop:prescribed} from $G$ (resp., $H$). Then the following diagram is commutative:
\[
\begin{tikzcd}
Q(G) \arrow[r, "\nu"] \arrow[d, "Q(f)"'] & G\cup\{\infty\} \arrow[d, "f\cup\{\infty\}"] \\
Q(H) \arrow[r, "\mu"']                   & H\cup\{\infty\}     \rlap{ .}              
\end{tikzcd}
\]
\end{lemma}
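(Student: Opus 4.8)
The plan is to verify the three assertions in turn, each of which reduces to a direct computation with the explicit formula for $Q(f)$. First I would check that $Q(f)$ is well-defined as a map of sets: a fraction $\frac{\sum a_i t^{g_i}}{\sum b_j t^{h_j}}$ equals another such fraction in $Q(G)$ if and only if the corresponding cross-multiplied polynomials agree in $S_G = k[t^G]$, so I need the assignment $\sum a_i t^{g_i} \mapsto \sum a_i t^{f(g_i)}$ to be a well-defined ring homomorphism $S_G \to S_H$. Since $f$ is a group homomorphism, $t^{g} \mapsto t^{f(g)}$ respects $t^g t^{g'} = t^{g+g'}$ and sends $t^0 = 1$ to $t^{f(0)} = t^0 = 1$; additivity on coefficients is clear. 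The only subtlety is that $f$ need not be injective, so distinct monomials $t^{g}, t^{g'}$ with $f(g) = f(g')$ get identified and their coefficients added — but this is precisely what a ring homomorphism out of a monoid algebra does, so no contradiction arises, and $S_G \to S_H$ is a well-defined ring map. It is moreover injective-or-not-irrelevant; what matters is that it carries nonzero elements to nonzero elements is \emph{not} needed here, since we are mapping into $Q(H)$ and can just extend the ring map $S_G \to S_H \hookrightarrow Q(H)$ to the fraction field, which exists because $S_G$ is a domain and $S_H \hookrightarrow Q(H)$; this produces the field homomorphism $Q(f)\colon Q(G) \to Q(H)$ given by the displayed formula, and field homomorphisms are automatically injective.

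Next I would check that $Q(f)$ restricts to a ring map $R(G) \to R(H)$, which is the assertion that $Q(f)$ sends elements of nonnegative valuation to elements of nonnegative valuation. This follows once we establish the commutativity of the square relating $\nu$, $\mu$, $Q(f)$, and $f \cup \{\infty\}$, so I would prove that square first. On a monomial fraction, $\nu\big(\frac{\sum a_i t^{g_i}}{\sum b_j t^{h_j}}\big) = \min_{a_i \neq 0} g_i - \min_{b_j \neq 0} h_j$, and applying $f$ gives $f(\min_i g_i) - f(\min_j h_j)$. On the other side, $\mu\big(Q(f)(\text{same})\big) = \min_{a_i \neq 0} f(g_i) - \min_{b_j \neq 0} f(h_j)$, where now the minima are taken over the (possibly coincident) exponents $f(g_i)$, and a coefficient in the $S_H$-expansion is a sum $\sum_{i : f(g_i) = \gamma} a_i$ which could in principle vanish even when the individual $a_i$ do not. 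Here I use that $f$ is \emph{order-preserving}: $\min_i f(g_i) = f(\min_i g_i)$ as elements of $H$, because $f$ takes the smallest $g_i$ to the smallest $f(g_i)$. The potential cancellation among coefficients at the minimal exponent would only \emph{increase} the valuation of $Q(f)$ of the numerator, never decrease it; combined with the same remark for the denominator one sees the valuation can only go up — but in fact for the purpose of the commuting square one should be slightly careful. I expect the cleanest route is to note that $Q(f)$ is a field homomorphism, hence $\mu \circ Q(f)$ and $f \circ \nu$ are both valuations on $Q(G)$ (the composite of a valuation with an order-preserving group hom, restricted appropriately), and two valuations agreeing on the generators $t^g$ (where both give $f(g)$) agree everywhere; alternatively, argue monomial-by-monomial handling the cancellation explicitly. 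In either case, once the square commutes, $\nu(x) \geq 0 \implies \mu(Q(f)(x)) = f(\nu(x)) \geq 0$ since $f$ is order-preserving and fixes no sign constraint beyond $f(G^+) \subseteq H^+$, so $Q(f)$ restricts to $R(G) \to R(H)$.

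Finally, functoriality: for $G \xrightarrow{f} H \xrightarrow{g} K$ one has $Q(g) \circ Q(f) = Q(g \circ f)$ because both are determined on monomials by $t^{\gamma} \mapsto t^{g(f(\gamma))}$, and $Q(\id_G) = \id_{Q(G)}$ likewise; this gives the functor $R \colon \Ab_\le \to \CRings$. The main obstacle, as flagged above, is the well-definedness of the coefficient assignment when $f$ is not injective together with the precise handling of the minimum in the valuation formula under such an $f$ — everything else is a routine unwinding of definitions. I would resolve the non-injective case by the monoid-algebra universal property for the ring-map part and by the ``two valuations agreeing on generators'' principle for the commuting-square part, which sidesteps any delicate cancellation bookkeeping.
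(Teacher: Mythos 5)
You correctly flag non-injectivity of $f$ as the one genuine subtlety, but both of your proposed resolutions are incorrect, and the subtlety cannot actually be resolved: the displayed formula for $Q(f)$ only defines a field homomorphism when $f$ is injective (equivalently, strictly order-preserving). Your fraction-field extension argument fails because extending a ring map $A\to B$ with $A$ a domain to a map out of the field of fractions of $A$ requires all nonzero elements of $A$ to be sent to units of $B$, not merely that $A$ be a domain and $B$ embed in a field. Here that condition can genuinely fail: take $G=\Z\oplus\Z$ with the lexicographic order, $H=\Z$, and $f(a,b)=a$, an order-preserving (but not injective) group homomorphism; the nonzero element $u=t^{(0,1)}-t^{(0,2)}\in k[t^G]$ is sent to $t^0-t^0=0$, so the formula for $Q(f)(1/u)$ reads $1/0$, and no field homomorphism $Q(G)\to Q(H)$ carrying $t^g\mapsto t^{f(g)}$ exists (field homomorphisms are injective, but $u$ is killed). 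Your fallback principle --- that two valuations on $Q(G)$ agreeing on all $t^g$ and on $k^\times$ must agree everywhere --- is also false: a valuation is not determined by its values on a multiplicative generating set, since $\nu(x+y)$ can strictly exceed $\min(\nu(x),\nu(y))$, and it is exactly this excess (the cancellation you are trying to sidestep) that is at stake here.

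The paper's proof is terse but amounts to your monomial computation in the injective case, which is the only case used (the one application, in \cref{thm:mixed_subframe}, takes $f$ to be an inclusion $G_\idem\hookrightarrow G$). When $f$ is injective the monomials $t^{f(g_i)}$ remain distinct, so no cancellation occurs: $k[t^G]\to k[t^H]$ is injective and extends to the fraction fields, and order-preservation gives $\min_{a_i\neq0}f(g_i)=f(\min_{a_i\neq0}g_i)$, hence $\mu(Q(f)(\sum_i a_it^{g_i}))=f(\nu(\sum_i a_it^{g_i}))$, from which the commutativity of the square and the restriction $R(G)\to R(H)$ both follow. The correct repair to your argument is to make the injectivity hypothesis explicit rather than to try to handle general $f$ through universal properties or agreement-on-generators.
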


\begin{proof}
    Verifying that $Q(f)$ is a homomorphism of fields follows from the fact that $f$ is a homomorphism of groups. To show that its restriction to $R(G)$ maps to $R(H)$ requires that $f$ is a morphism of totally ordered abelian groups, and is deduced immediately from the definition of the valuations on $Q(G)$ and $Q(H)$.

    Finally, let us verify that the depicted diagram in the statement is commutative. Without loss of generality, assume $u/v\in Q(G)$ satisfies $\nu(u/v)=\nu(u)-\nu(v)=g_1-h_1$. As $f$ is order preserving, we know by definition of $\mu$ that $\mu(Q(f)(u/v))=f(g_1)-f(h_1)$. Hence, the diagram commutes for those elements $u/v\in Q(G)$ which have finite valuation in $G$, and the remaining case $u/v=0$ is immediate.
\end{proof}

There is an intimate link between the ideals of a valuation domain and its value group through the valuation. To make this structural connection precise, we need an analogue of ideals in terms of the value group. We remind the reader that $G^+\coloneqq\{g\in G\mid g\ge0\}$.

\begin{definition}
Let $(G, + , \leq)$ be a totally ordered group. A {\it filter} in $G^+$ is a non-empty subset $F\subset G^+$, that is upwards closed, i.e., $a\in F$ and $a\le b\in G^+$ implies $b\in F$. 

Moreover, a filter $F$ is 
\begin{itemize}
\item {\it principle} if $F=\{b\in G^+\mid b\ge a\}$ for some $a\in F$, i.e., $F$ has a unique minimal element;
\item {\it idempotent} if $F+F=F$, i.e., the group operation of $G$ restricts to $F$, is well-defined, and surjective.
\end{itemize}
Finally, a proper filter $F$ in $G^+$ is {\it prime} if for all $a,b\in G^+\setminus F$ we have $a+b\in G^+\setminus F$.
\end{definition}

\begin{proposition}\label{prop:idealsAndFilters}
Let $R=R_\nu$ be a valuation domain with valuation $\nu\colon R_{(0)}\to G\cup\{\infty\}$. For an ideal $I\subseteq R$ and a filter $F\subset G^+$, consider the following assignments:
\[
I\mapsto \nu(I)=\{\nu (r/1)\mid r\in I\},\enspace F\mapsto I(F)\coloneqq\{r\in R\mid \nu(r/1)\in F\}.
\]
Then they define mutually inverse order-preserving maps between the non-zero ideals in $R$ and the set of filters in $G^+$ with respect to inclusion. Moreover, this bijection restricts to non-zero prime (resp., principle, idempotent) ideals, which correspond to prime (resp., principle, idempotent) filters in $G^+$.
\end{proposition}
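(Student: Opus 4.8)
The plan is to reduce everything to two elementary facts about $\nu$ and then run essentially the same computation for each part of the statement. Throughout I use the harmless conventions that $\nu(I)$ denotes the set of \emph{finite} values $\{\nu(r/1) : 0\neq r\in I\}$ and that $I(F)$ is tacitly enlarged by $0$, so that both assignments land where claimed. \textbf{Fact A}: the restriction of $\nu$ to $R\setminus\{0\}$ surjects onto $G^+$, since $G=\nu(Q^\times)$ by definition of the value group and any $x\in Q$ with $\nu(x)\ge 0$ already lies in $R_\nu=R$. \textbf{Fact B}: for nonzero $x,y\in R$ one has $x\in(y)$ if and only if $\nu(x/1)\ge\nu(y/1)$, and $x,y$ are associates if and only if $\nu(x/1)=\nu(y/1)$; both follow from $\nu$ being a monoid homomorphism with $R^\times=\{r\in R:\nu(r/1)=0\}$.

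With these in hand, well-definedness and order-preservation are routine. If $I\neq 0$ is an ideal then $\nu(I)$ is non-empty and upwards closed: given $\nu(r/1)\le b\in G^+$, pick $x\in R$ with $\nu(x/1)=b$ by Fact A; then $\nu((x/r)/1)\ge 0$, so $x=(x/r)r\in I$ and $b\in\nu(I)$. If $F$ is a filter then $I(F)$ is a nonzero ideal: closure under sums comes from the ultrametric inequality together with upwards closure of $F$, closure under multiplication by $a\in R$ from $\nu((ar)/1)=\nu(a/1)+\nu(r/1)\ge\nu(r/1)$, and non-triviality again from Fact A. Both maps are evidently monotone, and they are mutually inverse: $\nu(I(F))=F$ holds with the inclusion from left to right by definition and the reverse by Fact A, while $I(\nu(I))=I$ holds with the easy inclusion by definition and the other one by Fact B (if $\nu(r/1)=\nu(s/1)$ with $0\neq s\in I$, then $r/s\in R^\times$, so $r\in I$).

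It remains to transport the three adjectives across this bijection. \emph{Principal}: $\nu((r))=\{\nu(r/1)+g:g\in G^+\}$ is precisely the principal filter with least element $\nu(r/1)$, and conversely if $F$ has least element $a$ and $\nu(x/1)=a$, then Fact B gives $I(F)=(x)$. \emph{Prime}: properness corresponds because $\nu(I)=G^+$ iff $0\in\nu(I)$ iff $1\in I$; and if $I$ is prime with $a+b\in\nu(I)$ for $a,b\in G^+$, choose $x,y\in R$ with $\nu(x/1)=a$ and $\nu(y/1)=b$, so that $xy\in I(\nu(I))=I$ forces $x\in I$ or $y\in I$, i.e.\ $a\in\nu(I)$ or $b\in\nu(I)$; the converse is the same argument reversed, with the case $xy=0$ handled since $R$ is a domain. \emph{Idempotent}: the only genuinely new input is the identity $\nu(I\cdot J)=\nu(I)+\nu(J)$ for nonzero ideals, where $\nu(I)+\nu(J):=\{a+b:a\in\nu(I),\,b\in\nu(J)\}$; one checks first that this sum-set is automatically upwards closed (if $c\ge a+b$ with $c\in G^+$ then $c-b\ge a$, so $c-b\in\nu(I)$, whence $c=(c-b)+b\in\nu(I)+\nu(J)$), then proves the inclusion $\supseteq$ via products $rs$ and the inclusion $\subseteq$ by bounding $\nu(\sum_i r_is_i)$ below by $\min_i(\nu(r_i/1)+\nu(s_i/1))$ and using upwards closure. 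Granting this, $I\mapsto\nu(I)$ intertwines ideal multiplication with the operation $+$ on filters, so $I^2=I$ if and only if $\nu(I)+\nu(I)=\nu(I)$.

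I expect the product formula $\nu(I\cdot J)=\nu(I)+\nu(J)$ used for the idempotent case --- together with the attendant check that a sum of two filters is again a filter --- to be the only step requiring real care; everything else is bookkeeping, the main subtlety being to keep the roles of $0$ and $\infty$ consistent so that ideals and filters match up on the nose.
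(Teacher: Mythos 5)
Your proof is correct and self-contained, but it takes a slightly different route than the paper on a couple of points. For the main bijection (well-definedness, order-preservation, mutual inverse) the paper simply cites \cite[\S II, Proposition 3.4]{fs01}, whereas you prove it from scratch via your Facts A and B — a reasonable trade if one wants the argument to be self-contained, and Fact A does depend on the (standard, and here implicit) convention that the value group is the full image $\nu(Q^\times)$. The more substantive difference is in the idempotent case: the paper argues directly that $I=I(F)$ idempotent forces $F\subseteq F+F$ by bounding $\nu\bigl(\sum a_i r_i\bigr)$ from below and separately that $F=F+F$ forces $I\subseteq I^2$ via the fact that $\nu(r)=\nu(st)$ implies $(r)=(st)$; you instead isolate the identity $\nu(I\cdot J)=\nu(I)+\nu(J)$ (together with the check that $\nu(I)+\nu(J)$ is again a filter), from which the idempotent correspondence drops out as a formal consequence. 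Your identity packages the same two estimates the paper uses into one structural statement, which is both cleaner and a bit more general than what is strictly needed. (Incidentally, the paper's displayed equality $\min_i(\nu(a_i)+\nu(r_i))=\min_i\nu(a_i)+\min_i\nu(r_i)$ is in general only a $\ge$, but that direction is all the argument requires; your version avoids this slip.) The arguments for the prime and principal cases match the paper's in substance, just spelled out more explicitly.
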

\begin{proof}
By \cite[\S II, Proposition 3.4]{fs01} the assignments are well-defined, order-preserving, and mutually inverse.
The correspondence of prime filters and prime ideals follows from the definitions and \Cref{def:valuations}(2). Moreover, an ideal $(r/1)\subseteq R$ is assigned to the filter $\{b\in G^+\mid b\ge \nu(r/1)\}$.

Let us discuss how the bijection restricts to the idempotent objects. For a filter $F\subseteq G^+$ let $I=I(F)$, and assume that $I$ is idempotent. Since $\nu(I)=F$, for every $g\in F$ there is some $r/1\in I$ such that $\nu(r/1)=g$. As $I$ is idempotent, we can write $r=\sum_{i=0}^na_ir_i$ for some $a_i,r_i\in I$. By definition of $\nu$ we have
\[
\nu(r/1)\ge\min_{0\le i\le n}\nu(a_ir_i/1)=\min_{0\le i\le n}\nu(a_i/1)+\nu(r_i/1)=\min_{0\le i\le n}\nu(a_i/1)+\min_{0\le i\le n}\nu(r_i/1),
\]
where the last equality uses that all valuations are positive. Since the right-hand side is an element of $\nu(I)+\nu(I)=F+F$, which is upwards closed, we deduce that $g\in F+F$. The converse inclusion $F+F\subseteq F$ is immediate.

Finally, let $F$ be an idempotent filter and set $I=I(F)$. For any $r\in I$ we have $\nu(r/1)\in F$, thus there are $f=\nu(s/1)$, $g=\nu(t/1)$ with $s,t\in I$ satisfying $\nu(r/1)=f+g=\nu(st/1)$. By classical results of valuation rings (e.g., \cite[\S II.3 Exercise 3]{fs01}), this holds if and only if $(r)=(st)\subset R$. Since $st\in I^2$, we have $(r)=(st)\subset I^2$, which proves that $r\in I^2$. Hence, $I\subseteq I^2$, and the converse inclusion is immediate.
\end{proof}

Combining \Cref{prop:prescribed} and \Cref{prop:idealsAndFilters}, we notice that the ideal structure of valuation domains is determined by their  associated value group $G$. Since we can construct valuation domains with prescribed value group, one can reduce all  phenomena which occur on the level of ideals of valuation domains to totally ordered groups. This will be a key observation that we use in \cref{sect:generalized}.

\indent
\subsection{Operations on groups}
Assembling new (totally ordered) groups from given ones is a common theme to construct new examples, and with view towards \Cref{prop:prescribed}, one may ask how such operations act on the resulting valuation domain.

\begin{example}
    Given totally ordered groups $(G,+,\le_G)$, $(H,+,\le_H)$, one can consider their direct sum $G\oplus H$ endowed with the {\it lexicographical order}. This ordering is defined as
    \[
    G\oplus H\ni(g,h)>0 \; \Longleftrightarrow\; \begin{cases}h>_H0, &\mbox{ if } g=0,\\g>_G0,& \mbox{ else}.\end{cases}
    \]
    In other words, an element is strictly positive if and only if its first non-zero entry is strictly positive. More generally, we have $(g,h)<(g^\prime, h^\prime)$ if and only if $(g^\prime-g, h^\prime-h)>0$. Since $G$ and $H$ are totally ordered, one can verify that this  order on the direct sum is again total.
\end{example}

\begin{remark}
We warn that this operation is not commutative. Consider the two totally ordered groups $G_1=\R\oplus \Z$ and $G_2=\Z\oplus \R$ equipped with the lexicographical ordering. We claim that these are not order-isomorphic groups. 

To this end, we will prove that $G_1^+\setminus\{0\}$ is a non-idempotent filter, whilst $G_2^+\setminus\{0\}$ is idempotent. Evidently they are filters, and every element $(n,x)\in G_2^+\setminus\{0\}$ can be written as the sum of two such. On the other hand, the element $(0,1)\in G_1^+\setminus\{0\}$ cannot be written as the sum of two elements, as otherwise $(m,y)+(k,z)=(0,1)$ implies that either $m,k$ are both $0$, or $m=-k$. The former case leads to a contradiction, because it implies either $y=0$ or $z=0$. If $m=-k\neq0$, the first non-zero entry of one tuple cannot be positive and thus does not lie in $G_1^+$.
 \end{remark}

We will now analyze how this binary product interacts with the construction of \Cref{prop:prescribed}.  In particular, given two totally ordered abelian groups $G$ and $H$, it is natural to ask if there is a way of constructing $R(G\oplus H)$ from $R(G)$ and $R(H)$. The following provides such a result.%

\begin{proposition}\label{prop:constructionOnRingLevel}
    Let $G$, $H$ be totally ordered abelian groups, and $k$ be a field. Let $R_{Q(H)}(G)$ denote the valuation domain with residue field $Q(H)$ and value group $G$. Then the ring homomorphism
    \begin{align*}
      \psi\colon k[t^{G\oplus H}] &\to Q_{Q(H)}(G)\\
      \sum_i c_it^{(g_i,h_i)}&\mapsto \frac{\sum_i \frac{c_it^{h_i}}{1}t^{g_i}}{1}
    \end{align*}
    induces a unique isomorphism of fields $\overline{\psi}\colon Q(G\oplus H)\to Q_{Q(H)}(G)$. Furthermore, there exists a valuation $\nu\colon Q_{Q(H)}(G)\to G\oplus H\cup\{\infty\}$ satisfying $\nu_{Q(G\oplus H)}=\nu\circ \overline{\psi}$. In particular, $R(G\oplus H)\cong R_\nu$. 
\end{proposition}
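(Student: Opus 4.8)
The plan is to construct the map $\psi$, show it descends to an isomorphism of fraction fields, transport the valuation $\nu_{Q(G)}$ on $Q_{Q(H)}(G)$ back along $\overline{\psi}$, and identify the result with $\nu_{Q(G\oplus H)}$. The key observation is that the ring $k[t^{G\oplus H}]$ and the ``iterated'' construction $\bigl(Q(H)\bigr)[t^G]$ agree on generators: a monomial $t^{(g,h)}$ should correspond to $t^h \cdot t^g$ where the inner $t^h$ lives in the residue field $Q(H)$ and the outer $t^g$ is the polynomial variable over $Q(H)$. So the first step is to verify that $\psi$, as written, is a well-defined ring homomorphism $k[t^{G\oplus H}] \to Q_{Q(H)}(G)$: additivity is clear since $\psi$ is defined coefficient-wise on monomials, and multiplicativity reduces to checking $\psi(t^{(g,h)} t^{(g',h')}) = \psi(t^{(g,h)})\psi(t^{(g',h')})$, i.e.\ $t^{h+h'} t^{g+g'} = (t^h t^g)(t^{h'} t^{g'})$ in $Q_{Q(H)}(G)$, which holds because both $t^\bullet$ are exponential maps.

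Next I would show $\psi$ is injective (it is a nonzero ring map out of a domain, so it suffices to note the image is not zero, or more carefully that distinct monomials in $G\oplus H$ map to $Q(H)$-linearly independent elements of $Q_{Q(H)}(G)[t^G]$ — indeed $t^{(g,h)}$ and $t^{(g',h')}$ with $g \ne g'$ map to different powers of the outer variable, and with $g = g'$, $h \ne h'$ map to distinct scalar multiples $t^h, t^{h'} \in Q(H)$ of the same power, which are $Q(H)$-linearly independent). Hence $\psi$ extends uniquely to a field embedding $\overline{\psi}\colon Q(G\oplus H) \hookrightarrow Q_{Q(H)}(G)$. Surjectivity is the step requiring a little care: the target field $Q_{Q(H)}(G)$ is generated over $k$ by the outer monomials $t^g$ ($g \in G$) and by $Q(H)$, which in turn is generated over $k$ by the inner monomials $t^h$ ($h \in H$); since $t^g = \psi(t^{(g,0)})$ and $t^h = \psi(t^{(0,h)})$ (up to the identification of the inner $t^h$ with $\tfrac{c t^h}{1}\tfrac{}{}$ at $c=1$), the image of $\overline\psi$ contains a generating set, so $\overline{\psi}$ is an isomorphism.

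Finally, transport structure: define $\nu \coloneqq \nu_{Q(G\oplus H)} \circ \overline{\psi}^{-1}\colon Q_{Q(H)}(G) \to (G\oplus H)\cup\{\infty\}$. Then by construction $\nu_{Q(G\oplus H)} = \nu \circ \overline{\psi}$, and $\nu$ is a valuation because $\nu_{Q(G\oplus H)}$ is one and $\overline{\psi}$ is a field isomorphism (the three axioms of \Cref{def:valuations} are preserved under precomposition with a ring isomorphism). The ``in particular'' statement $R(G\oplus H) \cong R_\nu$ then follows because $\overline{\psi}$ restricts to an isomorphism on valuation rings: $x \in R(G\oplus H) = R_{\nu_{Q(G\oplus H)}}$ iff $\nu_{Q(G\oplus H)}(x) \ge 0$ iff $\nu(\overline{\psi}(x)) \ge 0$ iff $\overline{\psi}(x) \in R_\nu$. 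I expect the main obstacle to be the bookkeeping in the surjectivity argument — specifically, making precise that every element of $Q_{Q(H)}(G)$ is a fraction of $Q(H)$-polynomials in $t^G$ and that each such is hit, which amounts to unwinding two layers of the $k[t^{(-)}]$ construction and the identification of the residue field of $R_{Q(H)}(G)$ with $Q(H)$; the valuation-transport and the ring-iso verifications are then formal.
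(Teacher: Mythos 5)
Your overall strategy is sound and offers a genuine alternative to the paper's argument, but your injectivity argument has a concrete flaw. The parenthetical remark that a nonzero ring map out of a domain is automatically injective is false (e.g.\ $\Z \to \Z/2$); that shortcut works only for maps out of a \emph{field}, and $k[t^{G\oplus H}]$ is a domain but not a field. Your ``more careful'' version also misstates the linear-independence claim: for $g = g'$, $h \ne h'$, the images $t^h t^g$ and $t^{h'} t^g$ are certainly not $Q(H)$-linearly independent --- each is a $Q(H)$-scalar multiple of $t^g$, and $t^{h'}\cdot(t^h t^g) - t^h \cdot(t^{h'} t^g) = 0$ is a nontrivial $Q(H)$-linear relation. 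What is true, and what you actually need, is $k$-linear independence: the monomials $\{t^h : h \in H\}$ form a $k$-basis of $k[t^H] \subset Q(H)$, and $\{t^g : g \in G\}$ are $Q(H)$-linearly independent in $Q(H)[t^G]$, so $\{t^h t^g\}$ is a $k$-linearly independent family. That gives $\ker\psi = 0$ and lets you invoke the universal property of the fraction field to obtain $\overline{\psi}$.

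At the structural level your route diverges from the paper's in two places. For surjectivity of $\overline{\psi}$, you note that the image contains the generators $t^g = \psi(t^{(g,0)})$ and $t^h = \psi(t^{(0,h)})$ of $Q_{Q(H)}(G)$ as a field over $k$; the paper instead constructs an explicit candidate inverse $\overline{\eta}$ (induced from a ring map $\eta\colon Q(H)[t^G] \to Q(G\oplus H)$), uses that a nonzero homomorphism of fields is injective, and checks $\overline{\eta}\circ\overline{\psi} = \mathrm{id}$ to conclude both maps are isomorphisms. Both arguments are correct; the explicit-inverse version has the merit of handing you a usable formula for $\overline{\psi}^{-1}$. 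For the valuation, your transport-of-structure argument ($\nu \coloneqq \nu_{Q(G\oplus H)} \circ \overline{\psi}^{-1}$) is clean and immediately delivers $\nu_{Q(G\oplus H)} = \nu\circ\overline{\psi}$, whereas the paper writes down $\nu$ directly on $Q(H)[t^G]$ (minimal $g$-exponent paired with the $Q(H)$-valuation of its coefficient), verifies the valuation axioms by a case analysis, and then checks the compatibility. Your version is substantially shorter and proves exactly what the statement asks; the paper's longer route has the side benefit of producing an explicit formula for $\nu$, though that formula is not visibly reused elsewhere in the paper.
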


\begin{proof}
    It is clear that $\psi$ defines a ring homomorphism which preserves non-zero elements. Therefore, by appealing to the universal property of localizations of rings, we obtain $\overline{\psi}$. Let us similarly construct its inverse. Consider the well-defined map
    \begin{align*}
      \eta\colon Q(H)[t^G] &\to Q(G\oplus H)\\
      \sum_i\frac{\sum_j c_{ij}t^{h_{ij}}}{\sum_j d_{ij}t^{f_{ij}}}t^{g_i}&\mapsto \sum_i\frac{\sum_j c_{ij}t^{(g_i,h_{ij})}}{\sum_j d_{ij}t^{(0,f_{ij})}}.
    \end{align*}
    Similar to $\psi$, one can prove that $\eta$ is a morphism of rings which preserves non-zero elements; thus it induces a morphism of fields $\overline{\eta}\colon Q_{Q(H)}(G)\to Q(G\oplus H)$. As a morphism of fields, $\overline{\eta}$ is injective. For surjectivity we compute
    \begin{align}\label{eq:constructionOnRingLevel}
    \overline{\eta}\circ \overline{\psi}\left(\frac{\sum_i c_it^{(g_i,h_i)}}{\sum_i d_it^{(e_i,f_i)}}\right)=
    \overline{\eta}\left(\frac{\frac{\sum_i \frac{c_it^{h_i}}{1}t^{g_i}}{1}}{\frac{\sum_i \frac{d_it^{f_i}}{1}t^{e_i}}{1}}\right)=
    \frac{\frac{\sum_i c_it^{(g_i,h_i)}}{1}}{\frac{\sum_i d_it^{(e_i,f_i)}}{1}}=
    \frac{\sum_i c_it^{(g_i,h_i)}}{\sum_i d_it^{(e_i,f_i)}},    
    \end{align}
    i.e., $\overline{\eta}\circ \overline{\psi}=\mathrm{id}_{Q(G\oplus H)}$. Therefore, $\overline{\eta}$ is an isomorphism. Since inverses are unique, $\overline{\psi}$ must be an isomorphism too.

    Similar as in the proof of \Cref{prop:prescribed}, we will construct $\nu\colon Q_{Q(H)}(G)\to G\oplus H\cup\{\infty\}$ by exhibiting a map on $Q(H)[t^G]$ first, and then extend it to a valuation on its field of fractions, i.e., $Q_{Q(H)}(G)$. To this end, define
    \begin{align*}
        \nu\colon Q(H)[t^G]&\to G\oplus H\cup\{\infty\}\\
        \sum q_it^{g_i}&\mapsto
        \begin{cases}
            \left(g_j\coloneqq\min\limits_{q_i\neq0}g_i,\nu_{Q(H)}(q_j)\right),&\mbox{ if there is some }q_i\neq0\\
            \infty, &\mbox{ else}.
        \end{cases}
    \end{align*}
    Let us prove that $\nu$ satisfies \Cref{def:valuations}(1)-(3). Evidently, $\nu(x)=\infty$ if and only if $x=0$. Now let $x,y\neq0$, and assume without loss of generality $\nu(x)=(g_1,\nu_{Q(H)}(q_1)), \nu(y)=(\tilde g_1,\nu_{Q(H)}(\tilde q_1))$. Computing the valuation of their product yields
    \begin{align*}
        \nu(xy)&=(g_1+\tilde g_1, \nu_{Q(H)}(q_1\tilde q_1))=(g_1+\tilde g_1,\nu_{Q(H)}(q_1) + \nu_{Q(H)}(\tilde q_1))\\
        &= (g_1,\nu_{Q(H)}(q_1)) + (\tilde g_1,\nu_{Q(H)}(\tilde q_1)) = \nu(x) + \nu(y),
    \end{align*}
    as desired. Assume without loss of generality $\nu(x+y)<\infty$, and $\nu(x)\le\nu(y)$ in $G\oplus H$, i.e., one of the following cases applies:
    \begin{enumerate}
        \item\label{en:case1} $g_1< \tilde g_1$. The defining property of $g_1$ tells us that the coefficient of $t^{g_1}$ in $y$ has to be zero, i.e., this power of $t$ is not in $y$. Thus,
        \[
        \nu(x+y)=(g_1,\nu_{Q(H)}(q_1+0))=\nu(x)\ge\min(\nu(x),\nu(y)).
        \]
        \item\label{en:case2} $g_1=\tilde g_1$ and $\nu_{Q(H)}(q_1)<\nu_{Q(H)}(\tilde q_1)$. There are two possibilities in this case: Either $q_1+\tilde q_1$ is zero, or non-zero. In the former case, we have
        \[
        \nu(x+y)=(g_1,\nu_{Q(H)}(q_1+\tilde q_1))\ge (g_1,\min(\nu_{Q(H)}(q_1),\nu_{Q(H)}(\tilde q_1)))=\nu(x),
        \]
        using our assumption. In the latter case, there are $g_1=\tilde g_1<g\in G, q\in Q(H)$ such that $\nu(x+y)=(g,\nu_{Q(H)}(q))$. The existence of $g$ is due to $\nu(x+y)<\infty$. Hence, $\nu(x+y)>\nu(y)\ge\nu(x)$ by definition of the order on $G\oplus H$.
        \item $g_1=\tilde g_1$ and $\nu_{Q(H)}(q_1)=\nu_{Q(H)}(\tilde q_1)$.
        Using the assumption we compute
        \[
        \nu(x+y)=(g_1,\nu_{Q(H)}(q_1+\tilde q_1))\ge (g_1,\min(\nu_{Q(H)}(q_1),\nu_{Q(H)}(\tilde q_1)))=\nu(x).
        \]
    \end{enumerate}
    As $\nu$ satisfies \Cref{def:valuations}(1)-(3), we may extend it by defining $\nu(x/y)=\nu(x)-\nu(y)$, for every $x/y\in Q_{Q(H)}(G)$. As in the proof of \Cref{prop:prescribed}, one may verify that this $\nu$ extends to a valuation with value group $G\oplus H$.

    We are left to prove $\nu_{Q(G\oplus H)}=\nu\circ \overline{\psi}$. Let
    \[
    x/y=\frac{\sum_i c_it^{(g_i,h_i)}}{\sum_i d_it^{(e_i,f_i)}}\in Q(G\oplus H),
    \]
    and assume without loss of generality that $\nu_{Q(G\oplus H)}(x/y)=(g_1,h_1)-(e_1,f_1)$. As computed in \eqref{eq:constructionOnRingLevel}, we have 
    \[
    \overline{\psi}(x/y)=\frac{\sum_i \frac{c_it^{h_i}}{1}t^{g_i}}{\sum_i \frac{d_it^{f_i}}{1}t^{e_i}}=\frac{\sum_i p_i t^{g_i}}{\sum_i q_i t^{e_i}}.
    \]
    Since there might be $i,j$ such that $g_i=g_j$ but $h_i\neq h_j$ (similarly with $e_i,f_i$), one has to rewrite the fraction $ \overline{\psi}(x/y)$. Doing so yields the fractions $u\in Q(H)$ (resp., $v\in Q(H)$) which consist of the sum of all $p_i$ (resp., $q_i$), such that $g_i=g_1$ (resp., $e_i=e_1$). These fractions satisfy
    \[
    \nu(\overline{\psi}(x/y))=(g_1,\nu_{Q(H)}(u))-(e_1,\nu_{Q(H)}(v))=(g_1,h_1)-(e_1,f_1),
    \]
    which equals $\nu_{Q(G\oplus H)}(x/y)$. Therefore, $\nu_{Q(G\oplus H)}=\nu\circ \overline{\psi}$.
\end{proof}

We will end this section by introducing the lexicographic coproduct in more generality, which will become relevant in Section \ref{sect:generalized}.
\begin{definition}\label{def:lexicographicSupport}
    Let $I$ denote a {\it well-ordered} set, i.e., every subset of $I$ has a minimal element. Given a collection of totally ordered abelian groups $(G_i)_{i\in I}$, define the {\it support} of an element $w\in \bigoplus\limits_{i\in I}G_i$ by 
    \[
    \supp(w)=\inf\{i\in I\mid w_i\neq0\}.
    \]
    In particular, the support gives us the following description of the lexicographical order of $G=\bigoplus\limits_{i\in I}G_i$: For any two elements, $v,w\in G$, we have
    \[
v<w\iff \begin{cases}v_{\supp(v)}<w_{\supp(w)}, &\mathrm{if}\,\supp(v)=\supp(w)\\v_{\supp(v)}<0, &\mathrm{if}\, \supp(v)<\supp(w)\\w_{\supp(w)}>0, &\mathrm{if}\, \supp(v)>\supp(w).\end{cases}
\]
Moreover, note that $v,w\in G^+$ satisfy $\supp(v+w)=\min(\supp(v),\supp(w))$.
\end{definition}
\section{The construction of Bazzoni--{\v{S}}t'ov\'{\i}\v{c}ek}\label{sect:stovicek}

In this section we briefly present some results of \cite{gldm1} and put them into context with \Cref{sect:VD}. We start by defining the notion of (flat) homological epimorphisms.

\begin{definition}
    A ring homomorphism $f\in\CRings(R,S)$ is a {\it homological epimorphism} if the multiplication map $S\otimes_RS\to S$ is an isomorphism (i.e., $f$ is an epimorphism of rings) and $\Tor_i^R(S,S)=0$ for all $i\ge1$. In the particular case that $S$ is a flat $R$-module, the latter property is always satisfied, and we will call such $f$ a {\it flat epimorphism}.

    Moreover, we call two homological epimorphisms $f\in\CRings(R,S)$, $g\in\CRings(R,S^\prime)$ {\it equivalent} if there exists an isomorphism of rings $\varphi\colon S\to S^\prime$, such that $g=\varphi\circ f$. We will denote the collection of these equivalence classes by $\home(R)$. Analogously for flat epimorphisms, we denote the collection of their equivalence classes by $\flate(R)\subseteq\home(R)$.
\end{definition}

The main result of \cite{gldm1} is that there is a strong connection between smashing ideals and homological epimorphisms of rings of weak global dimension one (of which valuation domains are a particular example). In the specific case of valuation domains, these are further related to a combinatorial structure that we now introduce. We remind the reader that $\Spec(R)$ is a totally ordered poset via inclusion for $R$ a valuation domain.

\begin{definition}[{\cite[Definition 5.7]{gldm1}}]\label{def:inter}
Let $R$ be a valuation domain. An {\it admissible interval} $[\fraki,\fp]$ is an interval in $\Spec(R)$ such that $\fraki^2=\fraki\subseteq\fp$, i.e., it is the collection of prime ideals between $\fraki=\fraki^2$ and $\fp$. Denote the collection of admissible intervals by $\inter(R)$. It defines a poset by defining
\[
[\fraki,\fp]<[\fj,\fq] \mbox{ if and only if }\fp\subsetneq\fj.
\]
\end{definition}
\begin{construction}[{\cite[Construction 5.22]{gldm1}}]\label{constr:interc}
Let $R$ be a valuation domain, $(\cI,\le)$ a non-empty chain of $(\inter(R),\le)$, i.e., a subposet such that its order is induced by $\inter(R)$, and $\cI$ is totally ordered. Furthermore, assume the following properties are satisfied by $\cI$:
\begin{enumerate}[label=(C\arabic*)]
\item If $\mathcal{S}=\{[\fj_\ell,\fq_\ell]\mid \ell\in\Lambda\}$ is a non-empty subset of $\cI$ with no minimal element, then $\cI$ contains an element of the form $[\fj, \bigcap_{\ell\in\Lambda} \fq_\ell]$.
\item Dually, if $\mathcal{S}=\{[\fj_\ell,\fq_\ell]\mid \ell\in\Lambda\}$ is a non-empty subset of $\cI$ with no maximal element, then $\cI$ contains an element of the form $[\bigcup_{\ell\in\Lambda}\fj_\ell, \fq]$.
\item Given any pair $[\fj_0,\fq_0] < [\fj_1,\fq_1]$ in $\cI$, then there are elements $[\fj,\fq], [\fj^\prime,\fq^\prime]$ in $\cI$ such that
\[
[\fj_0,\fq_0]\le[\fj,\fq]<[\fj^\prime,\fq^\prime]\le [\fj_1,\fq_1],
\]
and there is no other interval in $\cI$ in between $[\fj,\fq]$ and $[\fj^\prime,\fq^\prime]$.
\end{enumerate}
We will now fix some notation, which will be important for the construction. Denote by
\[\interc(R)\coloneqq\{\cI\subset\inter(R)\mid \cI\neq\varnothing \mbox{ is a chain satisfying (C1)-(C3)}\},\]
and let $[\fraki,\fp]=\bigwedge\cI$, $[\fraki^\prime,\fn]=\bigvee\cI$; i.e., the unique minimal and maximal element of $\cI$ respectively. Note that every prime ideal of the intervals in $\cI$ lies in $[\fraki,\fn]$, since $\cI$ is totally ordered. Consider a finite disjoint union $\cI=\cI_0\cup\hdots\cup\cI_n$ of chains in $\inter(R)$ which satisfies the following conditions:
\begin{enumerate}
\item For each $0\le \ell\le n$, $\cI_\ell$ is a subchain of $\cI$ and $[\fraki_\ell,\fp_\ell]=\bigwedge\cI_\ell, [\fraki^\prime_\ell,\fn_\ell]=\bigvee\cI_\ell$ exist in $\cI_\ell$.
\item If $j< \ell$, then $[\fj,\fq]<[\fj^\prime,\fq^\prime]$ for each $[\fj,\fq]\in\cI_j$, $[\fj^\prime,\fq^\prime]\in\cI_\ell$.
\end{enumerate}
Such disjoint unions exist by iterated application of (C3). Indeed, by applying it to the minimal and maximal element of some $\cI_\ell$ such that these are distinct, yields a disjoint union of $\cI_\ell$; which in turn gives the desired presentation of $\cI$ inductively. Using this, the idea will be to consider the direct limit over $n$ in the following construction.

Let $R_{\cI} \coloneqq\prod_{[\fj,\fq]\in\cI}R_\fq/\fj$ and consider the canonical map $R_\fn\to R_{\cI}$, which maps into every factor. Then the kernel is precisely the intersection of those prime ideals $\fj$, with $[\fj,\fq]\in\cI$; this intersection is $\fraki$. Hence, this map induces the embedding $g_{\cI}\colon R_\fn/\fraki\to R_{\cI}$. Similarly, given a disjoint union $\cI=\cI_0\cup\hdots\cup\cI_n$ with notation as above, define 
\[
g_{(\cI_0,\hdots,\cI_n)}\colon \prod\limits_{\ell=0}^nR_{\fn_\ell}/\fraki_\ell\to R_{\cI}
\]
to be the composition of the product of the maps $g_{\cI_\ell}\colon R_{\fn_\ell}/\fraki_\ell\to R_{\cI_\ell}$, with the canonical isomorphism $\prod_{\ell=0}^n R_{\cI_\ell}\cong R_{\cI}$. Hence, $g_{(\cI_0,\hdots,\cI_n)}$ is again an embedding. Now the point is that the image of these embeddings forms a direct system of subrings of $R_{\cI}$ over $n$. Denote by $R_{(\cI)}$ the direct limit of these images, i.e., the union of them. Let $f_{(\cI)}\colon R\to R_{(\cI)}$ be induced by
\[
\begin{tikzcd}[row sep=.5cm,column sep=.5cm]
R\arrow[r] &R_\fn/\fraki\arrow[r] &\prod\limits_{\ell=0}^nR_{\fn_\ell}/\fraki_\ell\arrow[rrr,"g_{(\cI_0,\hdots,\cI_n)}"] & & & \mathrm{Im}(g_{(\cI_0,\hdots,\cI_n)})\subset R_{\cI},                 
\end{tikzcd}
\]
that is, $f_{({\cI})}$ is the composite $R\to R_\fn/\fraki \ontop{$g_{\cI}$}{\to}R_{\cI}$, restricted to its image which lies in $R_{(\cI)}$.
\end{construction}

\begin{theorem}[{\cite[Theorem 3.10, Theorem 6.8, Theorem 5.23, Proposition 5.5]{gldm1}}]\label{thm:bijectionsofframes}
    Let $R$ be a valuation domain. Then there are bijections
    \[
    \Sm(\sfD(R)) \cong \home(R) \cong \interc(R)\cup\{\varnothing\}.
    \]
    Under the first bijection, a class $[f\colon R\to S]\in\home(R)$ is sent to the smashing ideal $\mathrm{ker}(-\otimes^\mathbf{L}_RS)$. Given a homological epimorphism $f\colon R\to S$, let $\cI(f)$ contain all intervals $[\fj,\fq]$ such that there exists a maximal ideal $\fn\subset S$ with $\fq=f^{-1}(\fn)$ and $\fj=\mathrm{ker}(R\to S\to S_\fn)$. The other bijection assigns $[f\colon R\to S]\in\home(R)$ to $\cI=\varnothing$ if $S=0$, and $\cI(f)$ otherwise. The inverse is given by sending a non-empty chain $\cI\in\interc(R)$ to the class of $f_{({\cI})}\colon R\to R_{(\cI)}$, as in \Cref{constr:interc}.

    Moreover, these bijections restrict to the following correspondences:
    \[
    \Th(\sfD(R)^\omega) \cong  \flate(R) \cong 
\{\text{Chains }\cI\text{ of }\inter(R) \cup \{\varnothing\}\text{ such that }\cI=\{[0,\fp]\}\text{ or }\cI=\varnothing\}.
    \]
\end{theorem}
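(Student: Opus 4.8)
The plan is to reduce everything to the cited results of \cite{gldm1} and the dictionary between ideals and filters established in \cref{sect:VD}. The first bijection $\Sm(\sfD(R)) \cong \home(R)$ is precisely \cite[Theorem 3.10]{gldm1}: for a ring $R$ of weak global dimension at most one, every smashing subcategory of $\sfD(R)$ arises as $\ker(-\dert_R S)$ for a homological epimorphism $f\colon R\to S$, and this assignment descends to a bijection on equivalence classes. Since a valuation domain has weak global dimension at most one (its finitely generated ideals are principal, hence flat), this applies verbatim, and the description of the map in the statement is just the definition of the correspondence in loc.\ cit. The second bijection $\home(R) \cong \interc(R) \cup \{\varnothing\}$ is the content of \cite[Theorem 5.23, Theorem 6.8]{gldm1}: \cref{constr:interc} produces, from a chain $\cI$ satisfying (C1)--(C3), a homological epimorphism $f_{(\cI)}\colon R \to R_{(\cI)}$, and \cite[Theorem 6.8]{gldm1} shows that $\cI \mapsto [f_{(\cI)}]$ and $f \mapsto \cI(f)$ are mutually inverse, with the zero ring $S = 0$ matching the empty chain. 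So the first two assertions are essentially a matter of citing and assembling these results, after checking the hypotheses of \cite{gldm1} are met (weak global dimension one, and that $\Spec R$ being a chain makes $\inter(R)$ and the admissible-interval combinatorics behave as in loc.\ cit.).

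For the ``moreover'' part, the key new ingredient is to identify, inside the chain of bijections, which homological epimorphisms are flat and to translate this through \cref{constr:interc}. By \cite[Proposition 5.5]{gldm1}, a homological epimorphism $f\colon R\to S$ from a valuation domain is flat precisely when $S$ is of the form $R_\fp$ for a prime $\fp$ (equivalently, a localization of $R$ at a multiplicative set), together with the trivial case $S = 0$; the point is that the intermediate rings $R_\fn/\fraki$ appearing in \cref{constr:interc} are flat over $R$ only when $\fraki = 0$, i.e.\ when the quotient is not actually taken, and the direct-limit/product construction collapses to a single localization only when the chain $\cI$ has a single interval with bottom prime $0$. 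Thus under $f \mapsto \cI(f)$, the flat epimorphisms correspond to chains of the shape $\{[0,\fp]\}$ (localization at the prime $\fp$, noting $0 = 0^2$ is always an idempotent prime so such intervals are admissible) or $\varnothing$ (the zero ring). Combining this with the Thomason--Neeman classification — which identifies $\Th(\sfD(R)^\omega)$ with specialization-closed subsets of $\Spec R = \Spec(\sfD(R)^\omega)$, and hence for a valuation domain with the chain $\{\fq \in \Spec R \mid \fq \subseteq \fp\}$ for varying $\fp$ together with $\Spec R$ itself and $\varnothing$ — gives the claimed restricted bijection, with the interval $[0,\fp]$ recording exactly the thick ideal of complexes supported on $\{\fq \subseteq \fp\}$.

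The main obstacle is the ``moreover'' identification: one must carefully match the notion of ``thick ideal of $\sfD(R)^\omega$'' (a subcategory of the compacts) with ``smashing ideal generated by compacts'' and then with ``flat epimorphism.'' Concretely, the telescope-type statement that the smashing subcategory $\ker(-\dert_R S)$ is compactly generated if and only if $f\colon R\to S$ is a flat epimorphism is the real engine here, and it is what \cite[Theorem 6.8, Proposition 5.5]{gldm1} provide; I would cite these rather than reprove them, but I would spell out the translation into the interval language, since that is where the specific normal form $\cI = \{[0,\fp]\}$ emerges and where \cref{constr:interc} must be unwound in the simplest case (no limits needed, $R_{(\cI)} = R_\fp$). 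Everything else — that $0$ is idempotent, that $\{[0,\fp]\}$ trivially satisfies (C1)--(C3), that $\varnothing$ corresponds to $S=0$ in both the flat and the general picture — is routine and can be dispatched in a sentence each.
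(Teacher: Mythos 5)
The paper does not supply a proof of this theorem at all: it is stated as a citation theorem, with the attribution to Bazzoni--\v{S}\v{t}ov\'{\i}\v{c}ek's Theorem 3.10, Theorem 6.8, Theorem 5.23, and Proposition 5.5 serving as the proof. Your proposal takes exactly that route --- reduce the first bijection to Theorem 3.10, the second to Theorems 5.23 and 6.8, and the ``moreover'' to Proposition 5.5 together with the Thomason--Neeman classification --- and, helpfully, spells out the unwinding that the paper leaves implicit (why flatness forces $\fraki = 0$, why a single interval, why $R_{(\{[0,\fp]\})} = R_\fp$, and why $\varnothing$ matches $S=0$). That translation is accurate; in particular the normal form $\cI = \{[0,\fp]\}$ is right because $R_\fq/\fraki$ is torsion-free over the domain $R$ only when $\fraki=0$, and a product of two distinct localizations fails to be an epimorphism of rings. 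So the proposal is correct and is a faithful (and more explicit) expansion of the citation the paper relies on.
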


\begin{remark}
    The first bijection of \cref{thm:bijectionsofframes} is generalised to all commutative rings by Hrbek in \cite{hrbek2024telescopeconjecturehomologicalresidue}. It seems, however, that the second bijection is specific to the case of valuation domains, given their totally ordered prime ideals.
\end{remark}

\begin{remark}\label{rem:frameordering}
    The set $\Sm(\sfD(R))$ is ordered by inclusion, giving it the structure of a frame. Using this order, the bijections of \Cref{thm:bijectionsofframes} impose an ordering on $\home(R)$ and $\interc(R)\cup\{\varnothing\}$. For our purposes, we demand that the bijection between $\Sm(\sfD(R))$ and $\home(R)$ is order-preserving, while the bijection between $\home(R)$ and $\interc(R) \cup \{\varnothing \}$ is order-reversing. The reasoning for this is somewhat superficial, in that we want the empty chain to be the minimal element of the frame. However, given how this affects the meet and join in $\interc(R) \cup \{\varnothing \}$, this seems to be the natural choice (which will be discussed in \Cref{prop:meet_and_join}). Moreover, we note that $[f]\le[g]$ holds for two classes of homological epimorphisms if and only if there exists a factorization $hf=g$, where $h$ is a suitable ring homomorphism.

    By abuse of notation, we may denote the points of $\Spc^\mathrm{s}(\sfD(R))$ (i.e., the topological space associated to $\Sm(\sfD(R))$ via Stone duality) by their corresponding elements from the frame $\home(R)$ or $\interc(R) \cup \{ \varnothing \}$ under the bijections of \cref{thm:bijectionsofframes}.
\end{remark}
From now on, we will always assume that our valuation domain $R$ is finite dimensional.

Let us provide some description about the meet and join transferred from $\Sm(\sfD(R))$ to $\home(R)$ and $\interc(R)\cup\{\varnothing\}$ under the bijection of \cref{thm:bijectionsofframes}. The following lemma is immediate.
\begin{lemma}\label{lem:constructing_new_chains}
    Let $\cI,\cJ\in\interc(R)\cup\{\varnothing\}$. Then one can construct new chains as follows:
    \begin{itemize}
        \item Merging all potentially overlapping intervals in $\cI\cup\cJ$ yields a chain denoted $\cI\lor\cJ$.
        \item The pairwise intersections of all intervals in $\cI$ and $\cJ$ yields a chain denoted $\cI\wedge\cJ$.
    \end{itemize}
\end{lemma}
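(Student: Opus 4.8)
The statement claims that given two chains $\cI,\cJ \in \interc(R) \cup \{\varnothing\}$, the two described constructions produce new elements of $\interc(R) \cup \{\varnothing\}$; since the lemma is declared "immediate," my job is to spell out precisely what "merging overlapping intervals" and "pairwise intersections" mean, and then verify that the outputs are again chains satisfying (C1)–(C3). The plan is to treat the meet and join separately, and in each case first give a clean definition, then check chain-ness, then check each of the three conditions.

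For the join $\cI \vee \cJ$: I would first observe that the union $\cI \cup \cJ$ sits inside $\inter(R)$, a poset whose elements are intervals $[\fraki,\fp]$ with $\fraki=\fraki^2 \subseteq \fp$, ordered by $[\fraki,\fp] < [\fj,\fq] \iff \fp \subsetneq \fj$. Since $\Spec(R)$ is totally ordered, any two intervals are either comparable in this order or "overlap" (share a prime). I would define $\cI \vee \cJ$ by declaring two intervals equivalent if they are linked by a finite chain of pairwise-overlapping intervals, and then replacing each equivalence class by the interval $[\bigcap \fraki_\ell, \bigcup \fp_\ell]$ (intersection of left endpoints, union of right endpoints, over the class); one must check the left endpoint is still idempotent — this follows because an idempotent prime ideal is exactly one of the form $\bigcap$ of a descending family of primes (or: idempotent filters are closed under the relevant intersections by \Cref{prop:idealsAndFilters}), and similarly an arbitrary union of primes is prime. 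The resulting collection is then totally ordered by construction (distinct merged intervals are non-overlapping, hence comparable). For (C1) and (C2): a subset of $\cI \vee \cJ$ with no minimal (resp. maximal) element refines to a subset of $\cI \cup \cJ$ with no minimal (resp. maximal) element, whose relevant endpoint already appears — up to the merging — because $\cI$ and $\cJ$ individually satisfy (C1)/(C2) and the merging only enlarges intervals downward/upward. For (C3): between two merged intervals $[\fj_0,\fq_0] < [\fj_1,\fq_1]$ one has a witnessing pair in $\cI$ or in $\cJ$ by (C3) for the original chains, and merging preserves the "nothing strictly in between" property since merging only happens among overlapping intervals, which are never strictly between two disjoint ones.

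For the meet $\cI \wedge \cJ$: define it as $\{\, [\fraki,\fp] \cap [\fj,\fq] \mid [\fraki,\fp]\in\cI,\ [\fj,\fq]\in\cJ,\ \text{the intersection is nonempty}\,\}$, where the intersection of two intervals in the totally ordered $\Spec(R)$ is $[\max(\fraki,\fj), \min(\fp,\fq)]$ — again the left endpoint $\max(\fraki,\fj) \in \{\fraki,\fj\}$ is idempotent, so this lies in $\inter(R)$. Chain-ness follows because $\Spec(R)$ is totally ordered, so any two such intersections are comparable. The three conditions are then inherited: for (C1), a subset of $\cI \wedge \cJ$ with no minimal element projects (via "which $\fq$-endpoint is the min") to a cofinal-below family in $\cI$ or in $\cJ$, and applying (C1) there produces the required $\bigcap$ of right endpoints, whose max-left-endpoint is handled symmetrically via (C2) on the other chain; (C2) is dual; (C3) follows by combining the (C3) witnesses of $\cI$ and $\cJ$ and intersecting.

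The main obstacle is bookkeeping rather than conceptual: making the definitions of "merge" and "pairwise intersection" precise enough that (C1)–(C3) can be verified without an unbounded case analysis, and in particular checking that idempotency of the left endpoints and primeness of the new endpoints survive infinite unions and intersections. Once one records that in a valuation domain the idempotent primes are closed under the directed intersections and unions that arise here (a consequence of \Cref{prop:idealsAndFilters} applied to idempotent filters), the verification of each condition is a short endpoint-chasing argument, which is why the lemma can reasonably be called immediate.
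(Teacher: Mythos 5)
Your proposal overlooks the standing hypothesis that the paper puts in force just before this lemma: ``From now on, we will always assume that our valuation domain $R$ is finite dimensional.'' Under that assumption $\Spec(R)$ is a finite chain, so $\inter(R)$ is finite and every $\cI\in\interc(R)$ is a finite chain. Conditions (C1) and (C2) are then vacuous (every non-empty finite subset has a minimum and a maximum), and (C3) is automatic (between two comparable elements of a finite totally ordered set one finds a consecutive pair). The only thing left to check is that the two constructions return totally ordered subsets of $\inter(R)$, and that really is immediate: after merging overlapping intervals the surviving intervals are pairwise disjoint, hence comparable in $\inter(R)$; and if two non-empty pairwise intersections $[\fraki,\fp]\cap[\fj,\fq]$ and $[\fraki',\fp']\cap[\fj',\fq']$ share a prime, then $[\fraki,\fp]$ and $[\fraki',\fp']$ overlap and so coincide in the chain $\cI$, and likewise in $\cJ$, so distinct intersections are disjoint and comparable. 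The left endpoints stay idempotent simply because the minimum or maximum of finitely many comparable idempotent primes is one of them. That one-paragraph check is what the paper means by ``immediate.''

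Your proposal instead tries to prove the lemma without the finiteness hypothesis, which is more ambitious but, as written, has gaps. The assertion that ``an idempotent prime ideal is exactly one of the form $\bigcap$ of a descending family of primes'' is not a correct characterization (the zero ideal is idempotent without being a nontrivial such intersection); only one direction is true, namely that the intersection of a strictly descending chain of primes in a valuation domain is idempotent. The verification of (C1)--(C3) for $\cI\vee\cJ$ and $\cI\wedge\cJ$ in the infinite case is also genuinely delicate: the merging can collapse infinite subchains of $\cI\cup\cJ$ into a single interval, and the claim that a subset of $\cI\vee\cJ$ with no minimum ``refines to'' a corresponding subset of $\cI\cup\cJ$ whose (C1)-endpoint survives the merging would need a careful argument that your outline does not supply. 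None of this matters under the paper's standing assumption, so you should invoke it; without it the lemma would need the extra work you gesture at but do not complete.
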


\begin{example}\label{ex:meets_joins}
    Suppose there is a valuation domain $R$ of Krull dimension 2 such that all prime ideals $(0)\subset \fp\subset \fm$ are idempotent (we explicitly construct such a ring in \Cref{ex:keller=2}). Then the following gives some examples of the constructions of \Cref{lem:constructing_new_chains}.
    \begin{itemize}
        \item $\{[\fp,\fm]\}\lor\{[(0),\fp]\}=\{[0,\fm]\}$ and $\{[\fp,\fm]\}\wedge\{[(0),\fp]\}=\{[\fp,\fp]\}$.
        \item $\{[\fp,\fm]\}\lor \{[(0),(0)],[\fp,\fp]\}=\{[(0),(0)],[\fp,\fm]\}$ and $\{[\fp,\fm]\}\wedge \{[(0),(0)],[\fp,\fp]\}=\{[\fp,\fp]\}$.
    \end{itemize}
\end{example}
In order to prove that the constructed chains of \Cref{lem:constructing_new_chains} define the join and meet as suggestively written, we require the following lemma. Its proof is straightforward, but it is indispensable for determining factorizations between homological epimorphisms in terms of the corresponding chains.
\begin{lemma}\label{lem:contracting}
    Let $R$ be a valuation domain and let $[\fraki_1,\fp_1],[\fraki_2,\fp_2]$ be admissible intervals in $R$. Then the natural map $R_{\fp_1}/\fraki_1\to R_{\fp_2}/\fraki_2$ exists if and only if $\fraki_1\subseteq\fraki_2\subseteq\fp_2\subseteq\fp_1$.
\end{lemma}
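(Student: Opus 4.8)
The plan is to reduce the existence of the natural map to the universal properties of localization and quotient. Both $R_{\fp_1}/\fraki_1$ and $R_{\fp_2}/\fraki_2$ are canonically $R$-algebras (first localize, then quotient), and any ring homomorphism between them compatible with the structure maps from $R$ is unique if it exists; so ``the natural map exists'' means precisely that the structure map $\pi\colon R\to R_{\fp_2}/\fraki_2$ factors through $R\to R_{\fp_1}/\fraki_1$.

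First I would record two elementary facts about $\pi$. Since $R$ is a domain and $\fraki_2\subseteq\fp_2$ is prime, the kernel of $R\to R_{\fp_2}$ is $0$ and $\fraki_2 R_{\fp_2}\cap R=\fraki_2$, whence $\ker(\pi)=\fraki_2$. Secondly, an element $r\in R$ maps to a unit of $R_{\fp_2}/\fraki_2$ if and only if $r\notin\fp_2$: if $r\notin\fp_2$ then $r/1$ is already a unit in $R_{\fp_2}$, while if $r\in\fp_2$ then $r/1$ lies in the maximal ideal $\fp_2 R_{\fp_2}$, so its class lies in the maximal ideal $\fp_2 R_{\fp_2}/\fraki_2$ of $R_{\fp_2}/\fraki_2$ and is not a unit.

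Next, by the universal property of $R\to R_{\fp_1}$ followed by the quotient by $\fraki_1 R_{\fp_1}$, the map $\pi$ factors through $R\to R_{\fp_1}/\fraki_1$ exactly when (i) every $s\in R\setminus\fp_1$ maps to a unit under $\pi$, and (ii) $\fraki_1\subseteq\ker(\pi)$. By the two facts above, (i) is equivalent to $R\setminus\fp_1\subseteq R\setminus\fp_2$, i.e.\ to $\fp_2\subseteq\fp_1$, and (ii) is equivalent to $\fraki_1\subseteq\fraki_2$. Together with $\fraki_2\subseteq\fp_2$, which holds because $[\fraki_2,\fp_2]$ is an admissible interval, conditions (i) and (ii) amount precisely to the chain $\fraki_1\subseteq\fraki_2\subseteq\fp_2\subseteq\fp_1$; and conversely this chain yields (i) and (ii), so the map exists. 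This gives both implications.

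I do not anticipate a genuine obstacle: the only mild points to keep straight are that ``natural'' must be read as ``over $R$'' (which forces uniqueness and lets the two universal properties do all the work), and that passing to $R_{\fp_2}/\fraki_2$ rather than to $R_{\fp_2}$ changes which elements of $\fp_2$ become zero but not which ones become units, so the unit characterisation used in step (i) is unaffected by the quotient.
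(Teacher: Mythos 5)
Your proof is correct. The paper does not actually supply an argument here, remarking only that the proof is ``straightforward''; your reduction to the two universal properties (localization at $\fp_1$, then quotient by $\fraki_1 R_{\fp_1}$), together with the observations that $\ker\bigl(R\to R_{\fp_2}/\fraki_2\bigr)=\fraki_2$ and that $r\in R$ becomes a unit in $R_{\fp_2}/\fraki_2$ precisely when $r\notin\fp_2$, is exactly the straightforward argument the authors have in mind, and it uses nothing beyond $R$ being a domain and $\fraki_2\subseteq\fp_2$ being prime. One could note in passing that the idempotence of $\fraki_1,\fraki_2$ in the definition of admissible interval is not needed for this lemma, so you have in fact proved a marginally more general statement.
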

\begin{proposition}\label{prop:meet_and_join}
    Let $[f],[g]\in\home(R)$. Then the join $[f]\lor[g]$ is represented by the homological epimorphism corresponding to $\cI(f)\wedge\cI(g)$. Dually, the meet $[f]\wedge[g]$ is represented by the homological epimorphism corresponding to $\cI(f)\lor\cI(g)$. In particular, the meet and join in $\interc(R)\cup\{\varnothing\}$ are given by the constructions in \Cref{lem:constructing_new_chains}, as suggestively denoted.
\end{proposition}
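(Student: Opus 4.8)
The plan is to translate statements about the frame $\Sm(\sfD(R))$ into statements about factorizations of homological epimorphisms, using the order described in \Cref{rem:frameordering} (so that $[f] \le [g]$ iff $g$ factors through $f$), and then translate those into the combinatorics of admissible intervals via \Cref{thm:bijectionsofframes}. Concretely, I would establish the join formula first and obtain the meet formula dually. For the join, let $\cI = \cI(f)$ and $\cJ = \cI(g)$. I want to show that the chain $\cI \wedge \cJ$ (pairwise intersections of intervals, as in \Cref{lem:constructing_new_chains}) represents the least upper bound of $[f]$ and $[g]$. Because of the order-reversal between $\home(R)$ and $\interc(R) \cup \{\varnothing\}$, the join in $\home(R)$ corresponds to the meet in $\interc(R) \cup \{\varnothing\}$, so I need $\cI \wedge \cJ$ to be the greatest lower bound of $\cI$ and $\cJ$ in $\interc(R) \cup \{\varnothing\}$ with respect to the interval order $\le$ of \Cref{def:inter}.

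The first genuine step is to check $\cI \wedge \cJ$ actually lies in $\interc(R)$: it is a chain (inherited from the total order on $\Spec(R)$), and one must verify conditions (C1)–(C3) of \Cref{constr:interc}. For (C1)/(C2) this follows because intersections and unions of prime ideals behave well under the intersection of intervals, and for (C3) one uses that (C3) holds for each of $\cI$, $\cJ$ separately together with a case analysis on how the boundary primes interleave. The second step is the order comparison: given an admissible interval $[\fj, \fq] \in \cI \wedge \cJ$, it arises as $[\fraki_1, \fp_1] \cap [\fraki_2, \fp_2]$ with $[\fraki_1,\fp_1] \in \cI$, $[\fraki_2,\fp_2]\in\cJ$, and by \Cref{lem:contracting} the maps $R_{\fp_i}/\fraki_i \to R_{\fq}/\fj$ exist; assembling these over all intervals gives a ring homomorphism $R_{(\cI)} \to R_{(\cI\wedge\cJ)}$ and likewise $R_{(\cJ)} \to R_{(\cI\wedge\cJ)}$ compatible with the structure maps from $R$, which is exactly the statement $[f] \le [h]$ and $[g] \le [h]$ where $h$ corresponds to $\cI \wedge \cJ$. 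The third step — the minimality — is the substantive one: given any $[\fk, \fr] \in \home(R)$ corresponding to a chain $\cK$ with $[f] \le [\fk,\fr]$ and $[g] \le [\fk,\fr]$, I must show $\cK \le \cI \wedge \cJ$ in the interval order, i.e. every interval of $\cK$ sits (in the $\le$ sense of \Cref{def:inter}) below some interval of $\cI \wedge \cJ$. Here I use that the factorizations $hf = (\text{map to } \cK)$ and $hg = (\text{map to }\cK)$ force, via \Cref{lem:contracting} applied componentwise, each boundary pair $(\fk_m, \fr_m)$ of an interval in $\cK$ to be squeezed between a boundary pair coming from $\cI$ and one coming from $\cJ$ in the appropriate way, so that $[\fk_m, \fr_m]$ refines into $\cI \wedge \cJ$.

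The main obstacle I anticipate is the bookkeeping in this minimality step: admissible intervals in $\cK$ need not correspond one-to-one to intervals of $\cI$ or of $\cJ$ — a single interval of $\cK$ may overlap several of $\cI$ — and one must carefully track how the maximal ideals of the various rings $R_{\cI}$, $R_{\cJ}$, $R_{\cK}$ pull back, using the explicit description of $\cI(f)$ in \Cref{thm:bijectionsofframes} (intervals $[\fj,\fq]$ with $\fq = f^{-1}(\fn)$, $\fj = \ker(R \to S_\fn)$). Once the join formula is proven, the meet formula $[f] \wedge [g] \leftrightarrow \cI(f) \vee \cI(g)$ follows by the same argument with all orders reversed — merging overlapping intervals is exactly the join operation in $\interc(R) \cup \{\varnothing\}$ under the interval order — and the final sentence of the proposition is then immediate from the identification of $\cI \wedge \cJ$ and $\cI \vee \cJ$ with the constructions of \Cref{lem:constructing_new_chains}. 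I would also remark that the edge cases where $f$ or $g$ is the zero map (so $\cI(f) = \varnothing$) are handled directly, since $\varnothing$ is the bottom of the frame and $\varnothing \wedge \cJ = \cJ$, $\varnothing \vee \cJ = \varnothing$ under these conventions, matching the frame axioms.
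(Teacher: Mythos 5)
Your overall strategy matches the paper's: translate the order on $\home(R)$ into factorizations, use \Cref{lem:contracting} to understand when factorizations exist, establish that the proposed element is a bound, and then argue universality. The paper's proof is terse but does exactly this, plus checking that \Cref{lem:constructing_new_chains} gives well-defined elements of $\interc(R)$ (which you rightly flag as a step that needs verification).

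There is, however, a genuine confusion in your characterization of the order on $\interc(R)\cup\{\varnothing\}$ that would sink the minimality step as written. You say $\cK\le\cI\wedge\cJ$ means "every interval of $\cK$ sits (in the $\le$ sense of \Cref{def:inter}) below some interval of $\cI\wedge\cJ$." But the order of \Cref{def:inter}, namely $[\fraki,\fp]<[\fj,\fq]\iff\fp\subsetneq\fj$, is a disjointness/left-of relation on \emph{individual admissible intervals}, used only to define what a chain is; it is not the order on $\interc(R)\cup\{\varnothing\}$. That order is only given implicitly through the order-reversing bijection of \Cref{rem:frameordering}, and unwinding it via \Cref{lem:contracting}, the correct characterization of $\cK\le\cL$ is interval \emph{containment}: for every $[\fj,\fq]\in\cK$ there exists $[\fraki,\fp]\in\cL$ with $\fraki\subseteq\fj\subseteq\fq\subseteq\fp$. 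Your later phrase about "boundary pairs squeezed between" suggests you may have this picture in mind, but you need to state it: if each interval of $\cK$ is contained in some interval of $\cI$ and also in some interval of $\cJ$, then it is contained in their pairwise intersection, hence in some interval of $\cI\wedge\cJ$. Two smaller points: you wrote $\varnothing\wedge\cJ=\cJ$, but $\varnothing$ is the bottom of $\interc(R)\cup\{\varnothing\}$ (it corresponds under the order-reversing bijection to the top of $\home(R)$, namely $R\to0$), so $\varnothing\wedge\cJ=\varnothing$; and "the same argument with all orders reversed" for the meet is a bit optimistic, since $\cI\wedge\cJ$ (pairwise intersections) and $\cI\vee\cJ$ (merge overlapping intervals) are not formally dual — the meet direction needs its own (structurally similar) argument, as in the paper, using that any lower bound's chain must cover each merged block of $\cI\cup\cJ$ by a single one of its (pairwise disjoint) intervals.
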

\begin{proof}
    Let $h$ be the homological epimorphism constructed from $\cI(f)\wedge\cI(g)$ using \Cref{constr:interc}. Using \Cref{lem:contracting}, it is clear that $[h]$ is indeed the join of $[f]$ and $[g]$, since the factorizations $[f],[g]\le[h]$ contract every interval in $\cI(f),\cI(g)$ to the maximal common one, i.e., the intervals in $\cI(f)\wedge\cI(g)$, yielding factorizations which turn $[h]$ into the join of $[f]$ and $[g]$.

    \indent
    Dually, let $h$ be the homological epimorphism constructed from $\cI(f)\lor\cI(g)$. Evidently, we have $[h]\le[f],[g]$ by \Cref{lem:contracting}. Assume there exists some $[\tilde h]\le [f],[g]$. In other words, each interval in $\cI(f),\cI(g)$ is necessarily covered by some interval in $\cI(\tilde h)$, because of \Cref{lem:contracting}. As $\cI(h)$ contains all minimal such coverings by definition, there must be a factorization $[\tilde h]\le [h]$.
\end{proof}

\begin{remark}
    We highlight that under our construction, the meet-prime elements of $\Sm(\sfD(R))$ and $\home(R)$ correspond to the join-prime elements of $\interc(R) \cup \{\varnothing\}$. We will freely use this fact without further warning.
\end{remark}

From \Cref{thm:bijectionsofframes} we immediately deduce the following observation.
\begin{corollary}\label{prop:VD&TC}
Let $R$ be a valuation domain. Then $\sfD(R)$ does not satisfy the Telescope Conjecture if and only if there exists some prime ideal $0\neq\fp\subset R$ such that $\fp^2=\fp$. Each such idempotent prime ideal gives rise to at least one unique smashing ideal which is not inflated from a compactly generated thick ideal.
\end{corollary}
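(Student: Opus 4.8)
The plan is to deduce \Cref{prop:VD&TC} directly from the combinatorial description of smashing and thick ideals furnished by \Cref{thm:bijectionsofframes}. Recall that the telescope conjecture holds for $\sfD(R)$ precisely when the inflation map $\Th(\sfD(R)^\omega)\to\Sm(\sfD(R))$ is a bijection, equivalently when every homological epimorphism out of $R$ is (equivalent to) a flat epimorphism. Via the bijections of \Cref{thm:bijectionsofframes}, this amounts to asking whether every chain $\cI\in\interc(R)\cup\{\varnothing\}$ is of the special form $\{[0,\fp]\}$ or $\varnothing$. So the strategy is: produce a chain not of this form exactly when an idempotent prime $0\neq\fp$ exists, and conversely show that if no such prime exists then the only chains are the flat ones.

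First I would handle the easy direction. Suppose $\sfD(R)$ fails the telescope conjecture. Then there is a chain $\cI\in\interc(R)$ with $\cI\neq\{[0,\fp]\}$ for every prime $\fp$; pick any admissible interval $[\fraki,\fp]\in\cI$. If $\fraki=0$ for every interval in $\cI$, then since the intervals in a chain are linearly ordered by the relation $[\fraki,\fp]<[\fj,\fq]\iff\fp\subsetneq\fj$ and $R$ has finite Krull dimension, $\cI$ being a chain of intervals all with bottom $0$ forces $\cI$ to consist of a single interval $[0,\fp]$ (two distinct such intervals $[0,\fp]<[0,\fq]$ would need $\fp\subsetneq 0$, impossible), contradicting our assumption. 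Hence some interval $[\fraki,\fp]\in\cI$ has $\fraki\neq 0$, and by the definition of admissible interval $\fraki=\fraki^2$; so $\fraki$ is a nonzero idempotent prime. For the quantitative addendum, any such idempotent prime $\fp$ yields the chain $\{[\fp,\fp]\}\in\interc(R)$ (one checks the conditions (C1)--(C3) are vacuous for a one-element chain, using $\fp^2=\fp$), whose associated smashing ideal is not inflated from a thick ideal since $\{[\fp,\fp]\}$ is not of the form $\{[0,\fq]\}$; distinct idempotent primes give distinct chains $\{[\fp,\fp]\}$ and hence distinct smashing ideals.

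Conversely, assume every nonzero prime of $R$ is non-idempotent, i.e., $\fp^2\subsetneq\fp$ for all $0\neq\fp$. Then the only idempotent prime is $0$, so every admissible interval is of the form $[0,\fp]$. Any chain $\cI\subset\inter(R)$ therefore consists of intervals $[0,\fp]$, and as above two distinct such cannot be comparable under $<$, so $\cI$ is either empty or a singleton $\{[0,\fp]\}$; this is exactly the flat-epimorphism case in the bottom row of \Cref{thm:bijectionsofframes}, whence $\interc(R)\cup\{\varnothing\}$ coincides with the chains classifying $\Th(\sfD(R)^\omega)$ and the inflation map is a bijection, i.e., the telescope conjecture holds.

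The only genuinely delicate point is verifying that a singleton $\{[\fp,\fp]\}$ with $\fp$ idempotent really lies in $\interc(R)$ — that conditions (C1)--(C3) hold. Conditions (C1) and (C2) are about subsets of $\cI$ with no minimal (resp.\ maximal) element, which is impossible for a one-element chain, and (C3) is about pairs $[\fj_0,\fq_0]<[\fj_1,\fq_1]$, again vacuous; the substantive content is just that $[\fp,\fp]$ is an admissible interval, which is precisely the condition $\fp^2=\fp$ from \Cref{def:inter}. Everything else is bookkeeping with the dictionary of \Cref{thm:bijectionsofframes} and the elementary observation about comparability of intervals $[0,\fp]$.
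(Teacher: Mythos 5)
Your proof is correct and takes essentially the same route the paper intends: the paper states the corollary as an immediate consequence of \Cref{thm:bijectionsofframes}, and your argument simply spells out the combinatorial bookkeeping — that admissible intervals $[\fraki,\fp]$ with $\fraki\neq 0$ exist iff a nonzero idempotent prime exists, that any two intervals of the form $[0,\fp]$ are incomparable so a chain of such is a singleton, and that a singleton $\{[\fp,\fp]\}$ with $\fp$ idempotent satisfies (C1)--(C3) vacuously and is not of the flat form $\{[0,\fq]\}$. (One small note: the appeal to finite Krull dimension in your first direction is unnecessary — incomparability of $[0,\fp]$ and $[0,\fq]$ already forces such a chain to be a singleton — though it is harmless since the paper fixes that hypothesis just before this corollary.)
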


Recalling \Cref{prop:idealsAndFilters}, \Cref{prop:VD&TC} can be rephrased in terms of the valuation group associated to the valuation ring. That is, for a valuation domain $R$ with value group $G$, $\sfD(R)$ satisfies the telescope conjecture if and only if there is no idempotent prime filter in $G^+$. This insight turns out to be valuable in order to determine whether the telescope conjecture holds for $\sfD(R)$. Let us give a motivating example for this:

\begin{example}[{\cite[Example 5.24]{gldm1}}, {\cite[\S 7.2]{balchin2023big}}, Keller's example]\label{ex:keller}
Let $\ell\ge2$, and consider the totally ordered abelian group $G=G_\ell=(\Z\left[\frac{1}{\ell}\right],\le, +)$, i.e., the localization of $\Z$ at powers of the fixed $\ell\ge2$, with order induced by their fractions in $\Q$. That is, for the multiplicative set $S=\{1,\ell,\ell^2,\hdots\}$ we have $S^{-1}\Z=\Z\left[\frac{1}{\ell}\right]$, and the natural map $S^{-1}\Z\to\Z_{(0)}\cong\Q$ induces a total order on $S^{-1}\Z$.

    Fix some field $k$. Then \Cref{prop:prescribed} yields the valuation domain $A = R_k\left(\Z\left[\frac{1}{\ell}\right]\right)$ which coincides with the ring also denoted by $A$ in \cite[\S2]{keller94}. That is, $A$ is the localization of $k[t,t^{1/\ell},t^{1/\ell^2},\hdots]$ at the maximal ideal $\fm=(t,t^{1/\ell},t^{1/\ell^2},\hdots)$. Let us determine the Zariski spectrum of $A$. 
    Using \Cref{prop:idealsAndFilters}, the non-zero prime ideals of $A$ correspond to prime filters in $G^+$. We claim that every prime filter $F\subset G^+$ corresponds to $\fm$. By uniqueness of the maximal ideal, it suffices to prove $\nu(\fm)\subseteq F$. For the sake of contradiction, suppose there is some $1/\ell^k$ for some $k\ge1$ such that $1/\ell^k\not\in F$, i.e., $1/\ell^k\in G^+\setminus F$. Since $F$ is a prime filter, any multiples of $1/\ell^k$ also lie in $G^+\setminus F$. This, however, contradicts the upwards closure of $F$, as such product is unbounded. Hence $\Spec(A)=\{(0),\fm\}$. Additionally, $\fm^2=\fm$. 
    
    Applying \Cref{constr:interc}, we find
    \[
    \interc(A)=\{\{[(0),(0)]\}, \{[(0),\fm]\},\{[\fm,\fm]\},\{[(0),(0)], [\fm,\fm]\}\}.
    \]
    Let $Q=A_{(0)}$. Combining this description of $\interc(A)$ with \Cref{thm:bijectionsofframes}, we can summarize the resulting equivalence classes of homological epimorphisms with domain $A$, and the smashing frame of $\sfD(A)$, as follows:
    \[    
\begin{tikzcd}[mymatr, row sep=0.41cm,column sep=-1.4cm]
                                         & \sfD(A) \arrow[rd, leftarrow] \arrow[ld, leftarrow] &                                         \\
\mathrm{ker}(-\dert_A Q) \arrow[rd, leftarrow] &                                                 & \mathrm{ker}(-\dert_Ak) \arrow[ld, leftarrow] \\
                                         & \mathrm{ker}(-\dert_A(Q\times k)) \arrow[d, leftarrow]  &                                         \\
                                         & \mathrm{ker}(-\dert_A A)=0   \ar[phantom, "\Sm(\sfD(A))"below=18pt]                       &                                        
\end{tikzcd}
\,\,
\begin{tikzcd}[mymatr,row sep=0.45cm,column sep=-1.05cm]
                       & {[f\colon A\to 0]} \arrow[ld, leftarrow] \arrow[rd, leftarrow] &                                                         \\
                    {[f\colon A\to Q]} \arrow[rd, leftarrow] &                                                            & {[f\colon A\to k]} \arrow[ld, leftarrow]                                                       \\
                                                           & {[f\colon A\to Q\times k]} \arrow[d, leftarrow]              &                                                                                              \\
                                                          & {[f\colon A\to A]}     \ar[phantom, "\home(A)"below=19pt]                                    &                                                                                             
\end{tikzcd}
\,\,
\begin{tikzcd}[mymatr,row sep=0.5cm,column sep=-1.1cm]
                       & {\varnothing} \arrow[ld] \arrow[rd] &                                                         \\
                    {\{[(0),(0)]\}} \arrow[rd] &                                                            & {\{[\fm,\fm]\}} \arrow[ld]                                                       \\
                                                           & {\{[(0),(0)],[\fm,\fm]\}} \arrow[d]              &                                                                                              \\
                                                          & {\{[(0),\fm]\}}    \ar[phantom, "\interc(A)"below=19pt]                                    &                                                                                             
\end{tikzcd}
\]

In particular, \Cref{thm:bijectionsofframes} tells us that the only compactly generated smashing ideals correspond to the representatives $A\to 0$, $A\to Q$, and $A\to A$, which are flat.

As $\Sm(\sfD(A))$ is spatial, we can appeal to Stone duality to obtain the smashing spectrum and compare it to the (Hochster dual of the) Balmer spectrum. In the notation of $\interc(A)$ we have:
\[
\begin{tikzcd}[style={every outer matrix/.append style={draw=black, inner xsep=6pt , inner ysep=9pt, rounded corners, very thick}},row sep=0.4cm,column sep=0.0cm]
\{[(0),(0)]\} && \{[\fm,\fm]\} \\
& \{[(0),\fm]\} \arrow[ru, rightsquigarrow] \arrow[lu, rightsquigarrow]  \ar[phantom, "\Spc^\mathrm{s}(\sfD(A))"below=19pt] \end{tikzcd}
\,\,\,
\begin{tikzcd}[style={every outer matrix/.append style={draw=black, inner xsep=6pt , inner ysep=9pt, rounded corners, very thick}},row sep=0.4cm,column sep=0.4cm]
\{[(0),(0)]\} \\ \{[(0),\fm]\}  \arrow[u, rightsquigarrow] \ar[phantom, "\Spc(\sfD(A)^\omega)^\vee"below=19pt]
\end{tikzcd}
\]
As usual, an arrow denotes specialization closure, so that the bottom points are open while the top points are closed.

The comparison map $\Spc^\mathrm{s}(\sfD(A)) \to \Spc(\sfD(A)^\omega)^\vee$ is characterized as
\[
\{[(0),(0)]\} \mapsto \{[(0),(0)]\}  \qquad
\{[(0),\fm]\} \mapsto \{[(0),\fm]\} \qquad
\{[\fm,\fm]\} \mapsto \{[(0),\fm]\}.
\]

\end{example}

\section{Generalising Keller's example}\label{sect:generalized}

In \cref{ex:keller} we investigated, in depth, the example of Keller's ring $A$ from the point of view of \cref{thm:bijectionsofframes}. In this section, we will use the constructions of \cref{sect:VD} to generalise this example. We begin by explicitly discussing the $n=2$ example.

\subsection{Keller's example generalised, $\mathbf{n=2}$}\label{ex:keller=2}
Let $\ell_1,\ell_2\ge2$ and consider the totally ordered abelian group $G =\Z\left[\frac{1}{\ell_1}\right]\oplus \Z\left[\frac{1}{\ell_2}\right]$, where the group structure of each summand has been discussed in \Cref{ex:keller}, and the ordering is defined by the lexicographical order. For a fixed field $k$, let us denote $R_k(G)$ by $A^{\star 2}$. To determine $\Spec(A^{\star 2})$, we will use \Cref{prop:idealsAndFilters} and classify the (idempotent) prime filters of $G^+$.

Let $F\subsetneq G^+$ be a non-empty subset, and let $w\in G^+\setminus F$ be any element satisfying the following property: For every $v\in G^+\setminus F$, the index of the first non-zero entry of $w$ is at most the index of the first non-zero entry of $v$. In other words, among the elements in $G^+\setminus F$, the first non-zero entry of $w$ has a minimal index. If $F$ is a prime filter, then any multiples of $w$ do not lie in $F$ and upwards closure implies that $w$ has to be of the form $(0,q)$. Thus, we end up with
\[
G^+\setminus F=\begin{cases}
     \{(s,t)\in G^+\mid s=0, t\neq 0\}, &\mbox{ if }w=(0,q),\, q\neq0\\
      \{(s,t)\in G^+\mid s=t=0\}, &\mbox{ if }w=(0,0).
\end{cases}
\]
Hence, the prime filters must be of the form $F=\{(s,t)\in G^+\mid s\neq0, t=0\}$ or $F=G^+\setminus\{0\}$. We claim that both filters are idempotent. As this may be verified in each entry separately, we will show that the former is idempotent. Since $F\subset G^+$, we know that every element in $F$ is of the form $(n/\ell_1^k,0)$, for some $n\ge 1,k\ge0$. Using that $\ell_1\ge 2$, we have for all $n\ge 2$, $k\ge1$:
\begin{equation}\label{eq:thickening}
    \frac{1}{1}=\frac{\ell_1-1}{\ell_1}+\frac{1}{\ell_1},\quad \frac{n}{1}=\frac{n-1}{1}+\frac{1}{1},\quad\frac{1}{\ell_1^k}=\frac{\ell_1-1}{\ell_1^{k+1}}+\frac{1}{\ell_1^{k+1}}, \quad\frac{n}{\ell_1^k}=\frac{n-1}{\ell_1^k}+\frac{1}{\ell_1^k}.
\end{equation}
Using \Cref{prop:idealsAndFilters}, we deduce that $A^{\star 2}$ contains precisely three prime ideals, each of which is idempotent. Denote them by $(0)\subset \fp\subset \fm$. Let us denote $Q=A^{\star 2}_{(0)}$, and $k(\fp)=A^{\star 2}_\fp/\fp$. Using \Cref{thm:bijectionsofframes} and applying \Cref{constr:interc} to the valuation domain $A^{\star 2}$, we can summarize all elements in $\interc(A^{\star 2})$ and $\home(A^{\star 2})$ in \Cref{tab:n=2}.
\begin{center}
\begin{tabular}{||P{.43\textwidth}||P{.52\textwidth}||}
\hline
\begin{tabular}{P{.14\textwidth}|P{.26\textwidth}}
$\cI$ &Repr.\ in $\home(A^{\star 2})$
\end{tabular}
&
\begin{tabular}{P{.25\textwidth}|P{.25\textwidth}}
$\cI$ & Repr.\ in $\home(A^{\star 2})$
\end{tabular}
\\
\hline
\begin{tabular}{P{.14\textwidth}|P{.26\textwidth}}
$\varnothing$ & $A^{\star 2}\to 0$\\
$\{[(0),(0)]\}$ & $A^{\star 2}\to Q$\\
$\{[(0),\fp]\}$ & $A^{\star 2}\to A^{\star 2}_\fp$\\
$\{[(0),\fm]\}$ & $A^{\star 2}\to A^{\star 2}$\\
$\{[\fp,\fp]\}$ & $A^{\star 2}\to k(\fp)$\\
$\{[\fp,\fm]\}$ & $A^{\star 2}\to A^{\star 2}/\fp$\\
$\{[\fm,\fm]\}$ &  $A^{\star 2}\to k$
\end{tabular}
&
\begin{tabular}{P{.25\textwidth}|P{.25\textwidth}}
 $\{[(0),(0)],[\fp,\fp]\}$ &  $A^{\star 2}\to Q\times k(\fp)$\\
 $\{[(0),(0)],[\fm,\fm]\}$ &  $A^{\star 2}\to Q\times k$\\
 $\{[\fp,\fp],[\fm,\fm]\}$ &  $A^{\star 2}\to k(\fp)\times k$\\
 $\{[(0),(0)],[\fp,\fm]\}$ &  $A^{\star 2}\to Q\times A^{\star 2}/\fp$\\
 $\{[(0),\fp],[\fm,\fm]\}$ &  $A^{\star 2}\to A^{\star 2}_\fp\times k$\\
 $\{[(0),(0)],[\fp,\fp],[\fm,\fm]\}$ &  $A^{\star 2}\to Q\times k(\fp)\times k$\\
&
\end{tabular}\\
\hline  
\end{tabular}
\captionof{table}{The empty chain along with the 12 elements of $\interc(A^{\star 2})$, and their representative in $\home(A^{\star 2})$.}\label{tab:n=2}
\end{center}

Let $[f\colon A^{\star 2}\to S]$ denote a class of a homological epimorphism, e.g., represented by one as given in \Cref{tab:n=2}. Instead of writing $\mathrm{ker}(-\dert_{A^{\star 2}} S)$ for the associated smashing ideal by \Cref{thm:bijectionsofframes}, we shall instead denote  it with the codomain of $f$, i.e., $S$. In particular, $A^{\star 2}$ sits at the bottom since $\mathrm{ker}(-\dert_{A^{\star 2}} A^{\star 2})=0$. With this notation in mind, we can describe the frame $\Sm(\sfD(A^{\star 2}))$ in the form of a Hasse diagram, as seen in \Cref{fig:n=2}. By \Cref{thm:bijectionsofframes}, the compactly generated smashing ideals correspond to $0$, $Q$, $A^{\star 2}_\fp$, and $A^{\star 2}$, which are circled.
\pgfkeys{/csteps/inner ysep=8pt}
\pgfkeys{/csteps/inner xsep=8pt}
\begin{figure}[h]
\begin{center}

\begin{tikzcd}[style={every outer matrix/.append style={draw=black, inner xsep=6pt , inner ysep=9pt, rounded corners, very thick}},row sep=0.65cm,column sep=0.2cm]
                             &                                                           & \Circled{0}    &                                                           &                              \\
                             & {\color{purple}k} \arrow[ru]                  & {\color{purple}k(\fp)} \arrow[u]                 & \Circled[inner color=purple]{Q} \arrow[lu]                 &                              \\
                             & k(\fp)\times k \arrow[u] \arrow[ru] & Q\times k \arrow[lu] \arrow[ru]                                     & Q\times k(\fp) \arrow[lu] \arrow[u] &                              \\
{\color{purple}A^{\star 2}/\fp}\arrow[ru]&                                                           & Q\times k(\fp)\times k  \arrow[lu] \arrow[ru] \arrow[u]  &                                                           & \Circled[inner color=purple]{A^{\star 2}_\fp} \arrow[lu] \\
                             & Q\times A^{\star 2}/\fp  \arrow[lu] \arrow[ru]                     &                                                                   & A^{\star 2}_\fp\times k \arrow[lu] \arrow[ru]                     &                              \\
                             &                                                           & \Circled{A^{\star 2}}  \arrow[lu] \arrow[ru]                                                               &                                                           &                             
\end{tikzcd}
\begin{center}\caption{The frame $\Sm(\sfD(A^{\star 2}))$ in terms of the codomain of the corresponding homological epimorphism. Circles elements correspond to compactly generated smashing ideals, and the elements in magenta are the meet prime elements.}\label{fig:n=2}\end{center}
\end{center}
\end{figure}
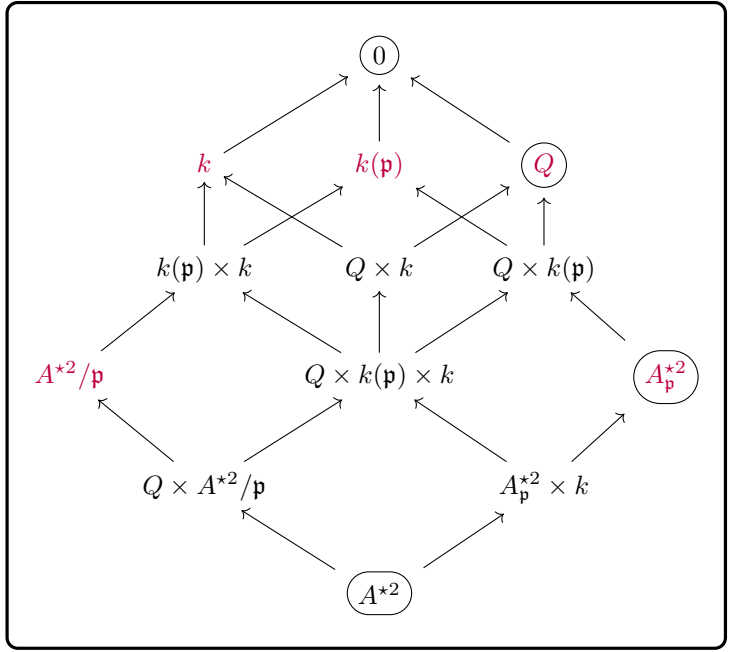

 The meet-primes of this poset can be obtained by inspection, and consist of $A^{\star 2}/\fp$, $k$, $k(\fp)$, $Q$, and $A^{\star 2}_\fp$; which are highlighted in magenta in \cref{fig:n=2}. Hence, the smashing spectrum of $\sfD(A^{\star 2})$ is given as in \cref{fig:n=2smashepi}. Additionally, we recall \Cref{ex:meets_joins}, where we computed some joins and meets. These examples are also visualized in \Cref{fig:n=2}, since $A^{\star 2}$ satisfies the properties of the ring in \Cref{ex:meets_joins}.

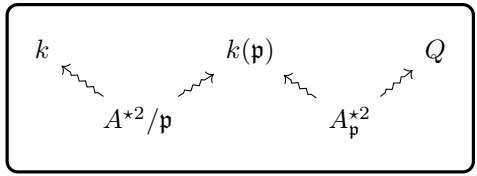
\begin{figure}[h]
\begin{center}
\begin{tikzcd}[style={every outer matrix/.append style={draw=black, inner xsep=6pt , inner ysep=9pt, rounded corners, very thick}},row sep=0.3cm,column sep=0.4cm]
k && k(\fp) && Q \\
& A^{\star 2}/\fp \arrow[ru, rightsquigarrow] \arrow[lu, rightsquigarrow] && \arrow[ru, rightsquigarrow] \arrow[lu, rightsquigarrow]  A^{\star 2}_{\fp} \end{tikzcd}
\begin{center}\caption{The smashing spectrum of $\sfD(A^{\star 2})$. The points at the top are closed, while the points at the bottom are open. An arrow denotes specialization closure.}\label{fig:n=2smashepi}\end{center}
\end{center}
\end{figure} 

At this point, it is worth to display this space with the chain representatives instead (\cref{fig:n=2smash}). When using such, a pattern seems to emerge: Its characteristics in the general case will be discussed in \cref{ex:keller=n}.

\begin{figure}[h]
\begin{center}
\begin{tikzcd}[style={every outer matrix/.append style={draw=black, inner xsep=6pt , inner ysep=9pt, rounded corners, very thick}},row sep=0.3cm,column sep=0.4cm]
\{[\fm,\fm]\} && \{[\fp,\fp]\} && \{[(0),(0)]\} \\
& \{[\fp,\fm]\} \arrow[ru, rightsquigarrow] \arrow[lu, rightsquigarrow] && \arrow[ru, rightsquigarrow] \arrow[lu, rightsquigarrow]  \{[(0),\fp]\} \end{tikzcd}
\begin{center}\caption{The smashing spectrum of $\sfD(A^{\star 2})$ in the notation of $\interc(A^{\star 2})$.}\label{fig:n=2smash}\end{center}
\end{center}
\end{figure}

In terms of chains, the comparison map $\Spc^\mathrm{s}(\sfD(A^{\star 2})) \to \Spc(\sfD(A^{\star 2})^\omega)^\vee$ is determined by $\{[\mathfrak{a},\mathfrak{b}]\} \mapsto \{[(0), \mathfrak{b}]\}$.

\subsection{Keller's example generalised}\label{ex:keller=n}

Now that we have constructed the $n=2$ case, we will investigate the properties of the case for general $n$. For $n \geq 1$, we define $A^{\star n}$ to be the valuation domain with value group $\bigoplus_n \Z\left[\frac{1}{\ell}\right]$, with lexicographic order as described in \Cref{def:lexicographicSupport}, and residue field $k$. We begin by exploring the structure of the prime ideals of $A^{\star n}$.

\begin{proposition}\label{prop:structureofprimes}
Let $n \geq 1$. Then all prime ideals of $A^{\star n}$ are idempotent and assemble into a chain
\[
    (0)=\fp_0\subset\fp_1\subset\hdots\subset \fp_n =\fm.
   \]
\end{proposition}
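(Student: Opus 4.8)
The plan is to reduce everything to the combinatorics of filters in $G^+$ via \Cref{prop:idealsAndFilters}, where $G = \bigoplus_n \Z\left[\tfrac{1}{\ell}\right]$ carries the lexicographic order of \Cref{def:lexicographicSupport}. Since $\Spec(A^{\star n})$ is totally ordered (it is a valuation domain) and the non-zero primes correspond bijectively and order-preservingly to prime filters of $G^+$, it suffices to (i) enumerate the prime filters of $G^+$, showing there are exactly $n$ of them forming a chain $F_1 \supsetneq F_2 \supsetneq \cdots \supsetneq F_n$, and (ii) show each $F_j$ is idempotent. Together with the zero ideal this yields the chain $(0) = \fp_0 \subset \fp_1 \subset \cdots \subset \fp_n = \fm$, all of whose non-zero terms are idempotent; the zero ideal is trivially idempotent as well.

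For step (i), I would argue exactly as in the $n=2$ case treated in \Cref{ex:keller=2}, now using the support function. Given a proper prime filter $F \subsetneq G^+$, pick an element $w \in G^+ \setminus F$ whose support $\supp(w)$ is maximal among elements of $G^+ \setminus F$ (here I use that $I = \{1,\ldots,n\}$ is finite, so this maximum exists). Primeness forces all $G$-multiples $mw$ for $m \geq 1$ to lie in $G^+ \setminus F$; but within the coordinate $\Z\left[\tfrac{1}{\ell}\right]$ indexed by $\supp(w)$, multiples of a positive rational are unbounded above, so upward closure of $F$ would be violated unless the $\supp(w)$-coordinate of $w$ is the \emph{only} obstruction, i.e.\ $F = \{v \in G^+ : v_i = 0 \text{ for all } i \geq \supp(w)\}$ together with appropriate handling of ties. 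More precisely one shows $G^+ \setminus F = \{v \in G^+ : \supp(v) \geq j\}$ for $j = \supp(w)$, equivalently $F = F_j := \{v \in G^+ : \supp(v) < j\} = \{v \in G^+ : v_i \neq 0 \text{ for some } i < j\}$, ranging over $j \in \{1, \ldots, n\}$ (with $F_1 = \varnothing$ excluded as it is not a filter, wait — rather indexed so that the largest is $G^+ \setminus \{0\}$; I would set the indexing to match the proposition's $\fp_1 \subset \cdots \subset \fp_n$). Checking that each such $F_j$ is genuinely an upward-closed prime filter is immediate from $\supp(v+w) = \min(\supp(v),\supp(w))$ for $v,w \in G^+$ and the fact that the lexicographic order makes $\supp$ monotone in the required sense.

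For step (ii), idempotence $F_j + F_j = F_j$: the inclusion $\supseteq$ is trivial from upward closure, and for $\subseteq$ I would decompose $v \in F_j$ by its support. If $\supp(v) = i < j$ with $v_i \in \Z\left[\tfrac{1}{\ell}\right]_{>0}$, then because $\ell \geq 2$ every positive element $n/\ell^k$ of $\Z\left[\tfrac{1}{\ell}\right]$ splits as a sum of two positive elements (this is precisely the content of the identities \eqref{eq:thickening}), so $v_i = a + b$ with $a, b > 0$; lifting this back up, $v = v' + v''$ with $\supp(v') = \supp(v'') = i < j$, hence $v', v'' \in F_j$. Alternatively, and perhaps more cleanly, one can invoke \Cref{prop:idealsAndFilters} directly in the ideal-theoretic language: one shows each $\fp_j$ equals $\fp_j^2$ by the same kind of splitting of monomials $t^g$, or one notes that $A^{\star n}$ is built from $A$ (Keller's ring) and an $(n-1)$-fold iterate via \Cref{prop:constructionOnRingLevel}, and runs an induction — the value group $G$ decomposes as $\Z\left[\tfrac{1}{\ell}\right] \oplus (\bigoplus_{n-1} \Z\left[\tfrac{1}{\ell}\right])$, so $A^{\star n} \cong R_{Q(A^{\star(n-1)})}(\Z\left[\tfrac{1}{\ell}\right])$, and prime filters of the direct sum are controlled by those of the summands.

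I expect step (i) — the enumeration of prime filters, specifically the argument that a prime filter is completely determined by a single support level — to be the main obstacle, since one must carefully handle the interaction between the lexicographic order and upward closure across the $n$ coordinates (in particular ruling out "mixed" filters that constrain some but not all coordinates at a given level). Once the list of prime filters is pinned down as $\{F_1, \ldots, F_n\}$, the idempotence in step (ii) is routine given the explicit identities already recorded in \eqref{eq:thickening}, and the chain structure is automatic from $\Spec$ of a valuation domain being totally ordered.
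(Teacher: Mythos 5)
Your overall plan — reduce to prime filters in $G^+$ via \Cref{prop:idealsAndFilters}, classify them by a single support level, then check idempotence using the splitting identities \eqref{eq:thickening} — is exactly the paper's strategy, and your step~(ii) is essentially the paper's argument. However, step~(i) contains a genuine error: you take $w \in G^+\setminus F$ of \emph{maximal} support, whereas the argument requires $w$ of \emph{minimal} support. The complement $G^+\setminus F$ of any proper prime filter turns out to be ``upward-closed in support'' (if $v\notin F$ has $\supp(v)=s$, then all $u$ with $\supp(u)\ge s$ fail to lie in $F$, because multiples of $v$ are unbounded in the $s$-th coordinate and elements of larger support are smaller in the lexicographic order). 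Consequently, for every proper prime filter other than $G^+\setminus\{0\}$, the element $(0,\dots,0,1)$ lies outside $F$ and the \emph{maximal} support in $G^+\setminus F$ is always $n$; this single number cannot distinguish $F_1,\dots,F_{n-1}$ from one another, so the formula $F=\{v : \supp(v)<j\}$ with $j$ the max support gives the wrong answer (e.g.\ for $n=3$, the filter $F_1=\{v:\supp(v)=1\}$ would incorrectly be identified with $F_2$). Relatedly, your first description $F=\{v\in G^+ : v_i=0 \text{ for all } i\ge\supp(w)\}$ is not even the same set as $\{v:\supp(v)<j\}$ — the former forbids all higher coordinates, while support only sees the first nonzero one.

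The fix is to take $w$ of minimal support $s$ in $G^+\setminus F$ (this exists because $\N$ is well-ordered; finiteness of $I$ is not needed for this step). Then primeness gives $mw\notin F$ for all $m\ge 1$, and since the $s$-th coordinate of $mw$ is unbounded, every $v$ with $\supp(v)=s$ eventually satisfies $v\le mw$ and hence $v\notin F$ by upward closure; every $v$ with $\supp(v)>s$ satisfies $v<w$ directly, so again $v\notin F$; and every $v$ with $\supp(v)<s$ is in $F$ by minimality of $s$. This pins down $F=\{v\in G^+ : \supp(v)<s\}$. Once this is corrected, the rest of your proposal (idempotence via \eqref{eq:thickening}, or alternatively the inductive route through \Cref{prop:constructionOnRingLevel}) goes through; the paper uses the \eqref{eq:thickening}-style argument component-wise rather than the inductive one.
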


\begin{proof}
 To prove the claimed statement, we will classify the prime filters in $G^+$ which is sufficient by \Cref{prop:idealsAndFilters}.  Consider any non-empty subset $F\subsetneq G^+$. Let $w\in G^+\setminus F$ be an element with minimal support, i.e.,  $\supp(w)\le\supp(v)$ for all $v\in G^+\setminus F$. This exists because every subset of $\N$ has a minimum and $F\neq G^+$. If $F$ is upwards closed, this forces
    \[
    \{v\in G^+\mid \supp(w)\le \supp(v)\}\subseteq G^+\setminus F.
    \] 
    In fact, the defining property of $w$ implies that this is an equality. Passing to the complement, we have $F=\{v\in G^+\mid \supp(v)<\supp(w)\}$. This  characterizes the prime filters in $G^+$, as for every index $1\le j\le n$ we find the prime filters $F_j\coloneq\{v\in G^+\mid \supp(v)<j+1\}$. 
    
    As for idempotence, it suffices to show that the prime filters $F_j$ are idempotent for all $1\le j\le n$. To this end, we must show that every element $v\in F_j$ can be written as the sum of two elements in $F_j$. We know each $v\in F_j$ has support $\le j$. That is, its first non-zero entry is positive and has index $\le j$. All higher entries are arbitrary, but are only finitely often non-zero. Hence, we can project $v$ onto its first $j$ entries $(v_1,\hdots,v_j)$, and reduce to the case where we show the following:
    \[
    \mbox{Every }\mkern-6mu(v_1,\hdots,v_j){\in}\left(\bigoplus\limits_{k=1}^j\Z\left[\frac{1}{\ell}\right]\right)_{>0}\mkern-24mu\mbox{ can be written as the sum of two elements in }\mkern-6mu\left(\bigoplus\limits_{k=1}^j\Z\left[\frac{1}{\ell}\right]\right)_{>0}\mkern-18mu.
    \]
    This can be achieved by picking suitable elements component-wise, depending on the sign, as done explicitly in \Cref{ex:keller=2}, see \eqref{eq:thickening}. Hence, all $F_j$ are idempotent as desired.
\end{proof}

From \cref{prop:structureofprimes} we immediately obtain all information about the Balmer spectrum of $\sfD(A^{\star n})$, and hence by Stone duality a description of $\Th(\sfD(A^{\star n})^\omega)$:

\begin{corollary}
Let $n \geq 1$. Then $\Spc(\sfD(A^{\star n})^\omega)^\vee \cong [n+1]$ equipped with the Alexandroff topology and $|\Th(\sfD(A^{\star n})^\omega)| = n+2$.
\end{corollary}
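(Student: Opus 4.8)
The plan is to deduce everything directly from \Cref{prop:structureofprimes}. Since we identify the Balmer spectrum $\Spc(\sfD(A^{\star n})^\omega)$ with the Zariski spectrum $\Spec(A^{\star n})$ via \cite{neemanchromtower, thomasonclassification}, \Cref{prop:structureofprimes} tells us that this space is the chain $(0)=\fp_0\subsetneq\fp_1\subsetneq\cdots\subsetneq\fp_n=\fm$ of $n+1$ prime ideals, totally ordered by inclusion, with the Zariski topology. As a poset with $n+1$ elements, the Hochster dual $\Spc(\sfD(A^{\star n})^\omega)^\vee$ is again a chain on $n+1$ points; concretely one checks that the Hochster dual of a finite spectral space is the poset with the reversed specialization order equipped with the Alexandroff topology, which gives the identification with $[n+1]$ (the chain on $n+1$ elements) as claimed. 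This is essentially bookkeeping about finite spectral spaces and their Hochster duals, so it is not the hard part.

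For the count of thick ideals, I would invoke Stone duality in the form already used in the introduction: $\Th(\sfD(A^{\star n})^\omega)$ is the frame of open subsets of $\Spc(\sfD(A^{\star n})^\omega)^\vee$. Since that space is a finite chain on $n+1$ points with the Alexandroff topology, its open sets are exactly the down-sets (or up-sets, depending on orientation conventions) of the chain, and a chain on $n+1$ elements has exactly $n+2$ down-sets (the empty set together with the $n+1$ principal down-sets). Hence $|\Th(\sfD(A^{\star n})^\omega)| = n+2$. Alternatively, and perhaps more in the spirit of the paper, one can read this off from the last display of \Cref{thm:bijectionsofframes}: the compactly generated thick ideals correspond to chains of the form $\{[0,\fp]\}$ together with the empty chain, and by \Cref{prop:structureofprimes} there are exactly $n+1$ choices of $\fp$ among $\fp_0,\dots,\fp_n$, giving $n+1+1 = n+2$ in total. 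Either route is a short argument.

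The only genuine subtlety is fixing orientation conventions so that the statement ``$\cong[n+1]$ equipped with the Alexandroff topology'' is literally correct — i.e.\ whether $[n+1]$ denotes the chain $\{0<1<\cdots<n\}$ on $n+1$ points and whether its opens are up-sets or down-sets — but since both the space and its opens are determined up to the obvious symmetry, any consistent choice yields the two stated numerical facts. So I do not expect a real obstacle here; the corollary is immediate once \Cref{prop:structureofprimes} and the Stone-duality dictionary of the introduction are in hand.
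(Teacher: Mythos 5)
Your proof is correct and takes essentially the same approach as the paper, which records this corollary without a separate proof block, simply noting that it follows "immediately" from \cref{prop:structureofprimes} together with the identification of the Balmer spectrum with the Zariski spectrum and Stone duality. Your fill-in of the Hochster-dual bookkeeping for finite chains and the count of down-sets is exactly the intended short argument, and your alternative count via the last display of \cref{thm:bijectionsofframes} is also consistent with the paper's framework.
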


We now move towards a full description of the frame $\Sm(\sfD(A^{\star n}))$. Using \cref{constr:interc}, we start with identifying the elements of this frame by describing $\interc(A^{\star n})$. In this finite setting where all prime ideals are idempotent, the rules degenerate into a particularly simple form:

\begin{lemma}\label{lem:intervals_keller=n}
Let $n \geq 1$. Then every $\mathcal{I} \in \interc(A^{\star n})$ is of the form
\[
\mathcal{I} = \{[\fp_{i_1},\fp_{j_1}], \dots , [\fp_{i_k},\fp_{j_k}]\},
\]
where $0 \le k \le n$ satisfies
\[
\fp_{i_t} \subseteq \fp_{j_t}  \text{ for } 1 \leq t \leq k \quad \text{ and } \quad \fp_{j_t} \subsetneq \fp_{i_{t+1}}  \text{ for } 1 \leq t < k. 
\]
\end{lemma}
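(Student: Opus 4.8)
The plan is to translate the defining conditions (C1)--(C3) from \Cref{constr:interc} into the finite, all-idempotent setting of $A^{\star n}$ and check that they collapse to exactly the stated combinatorial description. Recall from \Cref{prop:structureofprimes} that $\Spec(A^{\star n})$ is the finite chain $(0)=\fp_0\subset\fp_1\subset\cdots\subset\fp_n=\fm$ and that \emph{every} prime ideal is idempotent. Consequently, for any indices $i\le j$ the interval $[\fp_i,\fp_j]$ is automatically admissible in the sense of \Cref{def:inter} (the left endpoint $\fraki=\fp_i$ satisfies $\fraki^2=\fraki$, with no further constraint), so $\inter(A^{\star n})$ is precisely $\{[\fp_i,\fp_j]\mid 0\le i\le j\le n\}$, a finite poset ordered by $[\fp_i,\fp_j]<[\fp_{i'},\fp_{j'}]$ iff $\fp_j\subsetneq\fp_{i'}$, i.e.\ $j<i'$.

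\textbf{Key steps.} First I would observe that since $\inter(A^{\star n})$ is finite, any non-empty chain $\cI\subset\inter(A^{\star n})$ is itself finite, say $\cI=\{[\fp_{i_1},\fp_{j_1}]<\cdots<[\fp_{i_k},\fp_{j_k}]\}$ with $0\le k\le n+1$; the chain order immediately forces $\fp_{i_t}\subseteq\fp_{j_t}$ for each $t$ and $\fp_{j_t}\subsetneq\fp_{i_{t+1}}$ for $t<k$, which is exactly the displayed compatibility condition. Second, I would verify that conditions (C1) and (C2) are vacuous here: a finite non-empty chain always has a minimal and a maximal element, so there is never a subset ``with no minimal (resp.\ maximal) element,'' and (C1)/(C2) impose nothing. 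Third, I would check that (C3) is also automatic: given $[\fp_{i_t},\fp_{j_t}]<[\fp_{i_{t+1}},\fp_{j_{t+1}}]$ consecutive in $\cI$, these two intervals themselves witness the required pair with nothing strictly between them in $\cI$ (since they are consecutive in the finite totally ordered set $\cI$). Hence \emph{every} finite chain satisfying the displayed endpoint conditions lies in $\interc(A^{\star n})$, and conversely every element of $\interc(A^{\star n})$ has this form. Finally I would note that the count $0\le k\le n$ (rather than $k\le n+1$) follows because a chain of length $n+1$ would need $n+1$ pairwise ``separated'' intervals $[\fp_{i_t},\fp_{j_t}]$ with $j_t<i_{t+1}$ inside $\{0,\dots,n\}$, which is impossible; the extremal case $k=n$ is realized by $\{[\fp_0,\fp_0],[\fp_1,\fp_1],\dots,[\fp_{n-1},\fp_{n-1}]\}$ together with a suitable last interval, or more simply one checks $k\le n$ directly from $i_1\ge 0$, $j_k\le n$, and $j_t< i_{t+1}\le j_{t+1}$.

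\textbf{Main obstacle.} The only genuinely delicate point is making sure that \emph{no} further constraint sneaks in from admissibility: one must confirm that in $A^{\star n}$ the idempotent-closure requirement $\fraki^2=\fraki$ in \Cref{def:inter} is satisfied by \emph{all} left endpoints $\fp_i$ — which is exactly the content of \Cref{prop:structureofprimes} — and that the ``direct limit over disjoint-union presentations'' appearing in \Cref{constr:interc} does not secretly restrict which chains occur (it does not; that machinery only describes the associated ring $R_{(\cI)}$, not which $\cI$ are allowed). I expect the bulk of the write-up to be bookkeeping: spelling out that the chain order ``$<$'' on $\inter(A^{\star n})$ corresponds to ``$j_t<i_{t+1}$'' and that (C1)--(C3) are automatically met, so the statement is essentially a direct unwinding of the definitions once \Cref{prop:structureofprimes} is in hand.
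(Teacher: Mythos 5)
The paper does not actually include a proof of this lemma; it is asserted immediately after \Cref{prop:structureofprimes} with the remark that in the finite, all-idempotent setting the rules (C1)--(C3) ``degenerate into a particularly simple form.'' Your main argument — that $\inter(A^{\star n})$ is the finite poset $\{[\fp_i,\fp_j]\mid 0\le i\le j\le n\}$ because every prime is idempotent, that a chain in a finite poset automatically satisfies (C1) and (C2) (every non-empty subset has a minimum and maximum) and (C3) (consecutive elements exist), and that the chain order translates exactly to $j_t < i_{t+1}$ — is the correct unwinding of \Cref{constr:interc}, and it is surely what the authors had in mind.

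However, the final paragraph of your proposal contains a genuine error, and it points to a typo in the paper that you should have flagged rather than tried to justify. You initially (and correctly) observe that $0\le k\le n+1$, but then argue that $k=n+1$ is ``impossible.'' It is not: the chain $\{[\fp_0,\fp_0],[\fp_1,\fp_1],\dots,[\fp_n,\fp_n]\}$ has exactly $n+1$ intervals, satisfies $j_t=t-1<t=i_{t+1}$, lies in $\interc(A^{\star n})$, and is the unique chain of maximal length. Your ``more simply'' argument from $j_1<j_2<\dots<j_k$ with $j_t\in\{0,\dots,n\}$ in fact gives $k\le n+1$, not $k\le n$. The bound $k\le n+1$ is also what is used in the proof of \Cref{cor:counting}, which sums $\binom{n+k+1}{2k}$ for $0\le k\le n+1$ (the term $k=n+1$ contributes $\binom{2n+2}{2n+2}=1$ and is needed to get $\mathrm{Fib}_{2n+3}$; for $n=1$, the bound $k\le 1$ would give $1+3=4\ne 5=\mathrm{Fib}_5$). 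So the statement of the lemma should read $0\le k\le n+1$ (or $1\le k\le n+1$ if $\interc$ is taken not to include $\varnothing$), and your closing argument, which fabricates a reason for the erroneous bound, should be discarded.
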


\begin{corollary}\label{cor:counting}
Let $n \geq 1$. Then $|\Sm(\sfD(A^{\star n}))| = \mathrm{Fib}_{2n+3}$ where $\mathrm{Fib}_i$ is the $i^\mathrm{th}$ Fibonacci number.
\end{corollary}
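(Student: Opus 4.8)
The strategy is to turn the combinatorial description of $\interc(A^{\star n})$ from \Cref{lem:intervals_keller=n} into a transfer-matrix / recursive count. By \Cref{thm:bijectionsofframes} we have $|\Sm(\sfD(A^{\star n}))| = |\interc(A^{\star n})| + 1$ (the $+1$ accounting for the empty chain, which corresponds to the smashing ideal of $S = 0$). So it suffices to show that $|\interc(A^{\star n})| + 1 = \mathrm{Fib}_{2n+3}$, i.e. that the number of admissible chains on the $(n+1)$-point chain $\fp_0 \subsetneq \cdots \subsetneq \fp_n$ equals $\mathrm{Fib}_{2n+3} - 1$.

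First I would set up the counting problem cleanly. By \Cref{lem:intervals_keller=n}, a chain $\cI \in \interc(A^{\star n})$ is precisely a finite sequence of ``blocks'' $[\fp_{i_1},\fp_{j_1}], \dots, [\fp_{i_k},\fp_{j_k}]$ with $i_1 \le j_1$, $j_1 < i_2 \le j_2$, $j_2 < i_3$, and so on — in other words, a choice of $0 \le i_1 \le j_1 < i_2 \le j_2 < \cdots < i_k \le j_k \le n$. Such data is the same as choosing a (possibly empty) set of pairwise-disjoint, non-nested closed subintervals of $\{0,1,\dots,n\}$, where we are also allowed ``degenerate'' intervals $[\fp_i,\fp_i]$ and consecutive intervals are separated by the \emph{strict} inequality $j_t < i_{t+1}$. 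Let $a_n$ denote this count. I would then find a recursion for $a_n$ by conditioning on the position of $\fp_n$: either $\fp_n$ is not the right endpoint of any interval in $\cI$ (so $\cI$ is an admissible chain on $\{0,\dots,n-1\}$, contributing $a_{n-1}$), or the last interval is $[\fp_i, \fp_n]$ for some $0 \le i \le n$, in which case the rest of $\cI$ is an admissible chain on $\{0, \dots, i-2\}$ (because of the strict separation $j_{k-1} < i$), contributing $\sum_{i=0}^{n} a_{i-2}$ with the conventions $a_{-1} = a_{-2} = 1$ (the empty chain). This gives $a_n = a_{n-1} + \sum_{i=-2}^{n-2} a_i$, and subtracting the same identity for $n-1$ yields the clean second-order recursion $a_n = 2a_{n-1} + a_{n-2} - a_{n-3}$, or after one more telescoping, more simply $a_n - a_{n-1} = a_{n-1} + a_{n-3}$; in any case a linear recursion with constant coefficients.

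With the recursion and a couple of base cases in hand (from \Cref{ex:keller} one reads off $|\interc(A^{\star 1})| = 4$, so $a_1 = 4$, and $a_0 = 2$ corresponding to the chains $\varnothing$ — wait, $\varnothing \notin \interc$, so $a_0$ counts $\{[\fp_0,\fp_0]\}$ only together with the empty chain handled separately; I would recount carefully so that $a_n := |\interc(A^{\star n})|$ and check $a_0 = 1$, $a_1 = 3$, $a_2 = 12$ against \Cref{tab:n=2}), I would verify that the sequence $a_n + 1$ satisfies the same linear recursion as $\mathrm{Fib}_{2n+3}$ — namely, the subsequence of Fibonacci numbers at indices in a fixed residue class mod $1$ indexed by step $2$ satisfies $F_{m+2} = 3F_m - F_{m-2}$ — and matches on enough initial terms. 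A clean way to package this: prove by induction that $a_n + 1 = F_{2n+3}$ using the Fibonacci identity $F_{2n+3} = 3F_{2n+1} - F_{2n-1}$ together with whatever recursion I extracted for $a_n$, checking that the two recursions are compatible and that the base cases $n = 0, 1$ (and possibly $n=2$) agree. Alternatively one can give a bijective proof: encode an admissible chain by a tiling of a strip, matching the classical combinatorial interpretation of Fibonacci numbers as counting tilings by monominoes and dominoes, which would make the appearance of $\mathrm{Fib}_{2n+3}$ transparent and avoid juggling Fibonacci index identities.

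\textbf{Main obstacle.} The conceptual content is light — everything reduces to \Cref{lem:intervals_keller=n} — so the only real work is bookkeeping: getting the recursion for $a_n$ exactly right (the strict-versus-weak inequalities between consecutive blocks are easy to mis-handle, and the degenerate intervals $[\fp_i,\fp_i]$ must be counted), pinning down the correct boundary conventions ($a_{-1}, a_{-2}$, and whether the empty chain is inside or outside the count), and then matching against the correct shift of the Fibonacci sequence. The cleanest route is probably the tiling bijection: I would aim to biject $\interc(A^{\star n}) \cup \{\varnothing\}$ with tilings of a $1 \times (2n+2)$ strip by dominoes and monominoes (whose count is $\mathrm{Fib}_{2n+3}$), sending each interval $[\fp_i,\fp_j]$ and each gap to a controlled pattern of tiles; verifying this bijection is a routine but slightly fiddly case analysis, and it is where I would expect to spend most of the effort.
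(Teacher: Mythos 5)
Your route is genuinely different from the paper's. The paper stratifies $\interc(A^{\star n}) \cup \{\varnothing\}$ by the number $k$ of intervals, observes that the constraints $0 \le i_1 \le j_1 < i_2 \le \cdots \le j_k \le n$ give $|\interc^k| = \binom{n+k+1}{2k}$ by a direct stars-and-bars count, and closes with the Pascal-diagonal identity $\mathrm{Fib}_{2n+3} = \sum_{k=0}^{n+1}\binom{n+k+1}{2k}$. Your plan instead conditions on the rightmost interval to obtain a recursion (or, equivalently, a tiling bijection). Both are legitimate; the paper's count is shorter once the binomial identity is known, while your route avoids that identity at the cost of handling boundary conventions in the recursion.

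However, the recursion you write down is wrong, and the error is exactly in the place you flagged as delicate. If the last interval is $[\fp_i,\fp_n]$, the strict inequality $j_{k-1}<i$ forces $j_{k-1}\le i-1$, so the remaining intervals live on $\{0,\ldots,i-1\}$ (not $\{0,\ldots,i-2\}$), and that remainder \emph{may be empty}, so it is counted by $b_m := |\interc(A^{\star m})\cup\{\varnothing\}|$ rather than by $a_m := |\interc(A^{\star m})|$. The correct decomposition is
\[
b_n \;=\; b_{n-1} \;+\; \sum_{i=0}^{n} b_{i-1}, \qquad b_{-1}:=1,
\]
the first term being chains avoiding $\fp_n$ and the sum running over the choice of $i$ with last interval $[\fp_i,\fp_n]$. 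Subtracting the same identity at $n-1$ telescopes to $b_n = 3b_{n-1}-b_{n-2}$, which is exactly the recursion $\mathrm{Fib}_{2n+3}=3\,\mathrm{Fib}_{2n+1}-\mathrm{Fib}_{2n-1}$, and the base cases $b_0=2=\mathrm{Fib}_3$, $b_1=5=\mathrm{Fib}_5$ match. Your displayed recursions $a_n = 2a_{n-1}+a_{n-2}-a_{n-3}$ and $a_n-a_{n-1}=a_{n-1}+a_{n-3}$ both fail numerically: from Table~1 one has $(a_0,a_1,a_2,a_3)=(1,4,12,33)$, so $2\cdot12+4-1=27\ne 33$ and $12+1=13\ne 21$. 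The base-case line ``$a_1 = 3$'' also contradicts your own correct reading $a_1=4$ from Example~3.8. With the off-by-one and the $a$/$b$ bookkeeping repaired, your plan goes through, and the tiling bijection you propose is indeed the cleanest way to package it.
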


 \begin{proof}
        We will make use of the equality $\mathrm{Fib}_{2n+3} = \sum_{k=0}^{n+1} \binom{n+k+1}{2k}$. For $0 \leq k \leq n+1$, denote by $\interc^k(A^{\star n})$ the subset of  $\interc(A^{\star n})$, which consists of those chains with length $k$. For simplicity, denote an interval $[\fp_m,\fp_k]$ by $[m,k]$. We will prove that $|\interc^k(A^{\star n})| = \binom{n+k+1}{2k}$, whence the desired result.

        Evidently, when $k=0$ we only have the empty chain $\varnothing$, and we indeed have $\binom{n+1}{0} = 1$. If $k=1$, we have $\interc^1(A^{\star n})$ is described by the collection of pairs $[i,j]$ such that $0 \leq i \leq j \leq n$. This is well known to be counted by the triangular numbers; $\binom{n+2}{2}$.

        Let us also explicitly consider the case of $k=2$. $\interc^2(A^{\star n})$ consists of tuples $\{[i_1,j_1], [i_2, j_2]\}$ such that $0 \leq i_1 \leq j_1 < i_2 \leq j_2 \leq n$.  As we are allowed two equalities (namely $i_1 \leq j_1$ and $i_2 \leq j_2$) we have $n+3$ options to pick from, and we pick 4 objects, hence the $\binom{n+3}{4}$.

        In general, one observes that indeed $|\interc^k(A^{\star n})| = \binom{n+k+1}{2k}$, as we are considering tuples $\{[i_1,j_1], \dots , [i_k,j_k]\}$ with $i_t \leq j_t$ and $j_t < i_{t+1}$.\
    \end{proof}

\begin{table}[h]
\begin{tabular}{r|c|c|c|c|c|c}
$n$ & 1 & 2 & 3 & 4 & 5 & 6 \\ \hline
$|\Sm(\sfD(A^{\star n}))|$ & 5 & 13 & 34 & 89 & 233 & 610
\end{tabular}
\caption{A table of $|\Sm(\sfD(A^{\star n}))|$ for small values of $n$.}
\end{table}

Now that we understand the objects of $\Sm(A^{\star n})$, we will identify the frame structure which allows us to describe the smashing spectrum and the comparison map. We will do this by considering the minimal inclusions of the elements in $\interc(A^{\star n})$, which describes the corresponding  Hasse diagram.

Again, let us denote the intervals of prime ideals $[\fp_m,\fp_k]$ by $[m,k]$. Given two homological epimorphisms $f\colon A^{\star n}\to S$, $g\colon A^{\star n}\to S^\prime$, recall that these yield chains $\cI(f)$, $\cI(g)$ by \Cref{thm:bijectionsofframes}. Thus, without loss of generality, we may assume that $S$ and $S^\prime$ are products as constructed in \Cref{constr:interc}. We claim that the existence of a minimal factorization between $[f]\neq[g]$ is characterized by the following two rules:
    \begin{itemize}
        \item\textbf{Subset rule}: If $\cI(f)\subseteq \cI(g)$, then there is minimal factorization  $[g]\le[f]$ if and only if there is some $[i,i]\not\in\cI(f)$ such that $\cI(g)=\cI(f)\cup\{[i,i]\}$. In particular, if for some $[i,i]\not\in\cI(f)$ the set $\{[i,i]\}\cup\cI(f)$ is a chain, then there is a minimal factorization between $f$ and the homological epimorphism corresponding to the chain $\cI(f)\cup\{[i,i]\}$. 
        \item\textbf{Gluing rule}: If $\cI(f)\not\subseteq \cI(g)$, there is minimal factorization  $[g]\le[f]$ if and only if for some $0\le m\le k_1< k\le n$ there are $[m,k_1],[k_1+1,k]\in\cI(f)$ allowing us to write $\cI(g)=\{[m,k]\}\cup\cI(f)\setminus\{[m,k_1],[k_1+1,k]\}$. In particular, if for some $m\le k_1< k$ there are $[m,k_1],[k_1+1,k]\in\cI(f)$, then there is a minimal factorization between $f$ and the homological epimorphism corresponding to the chain $\{[m,k]\}\cup\cI(f)\setminus\{[m,k_1],[k_1+1,k]\}$.
  \end{itemize}   
   
  \begin{proposition}\label{prop:minimal_fact}
  Let $n \geq 1$ and $[f] \neq [g]$ as above. Then any minimal factorization $[g] \leq [f]$ arises either from the subset rule or the gluing rule.
  \end{proposition}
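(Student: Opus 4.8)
The plan is to work entirely on the combinatorial side, using the order-reversing bijection between $\home(R)$ and $\interc(A^{\star n})\cup\{\varnothing\}$ from \Cref{thm:bijectionsofframes} together with the characterization of factorizations provided by \Cref{lem:contracting}. Recall that $[g]\le[f]$ in $\home(A^{\star n})$ corresponds to $\cI(g)$ being obtained from $\cI(f)$ by a sequence of ``moves'' dictated by \Cref{lem:contracting}: each interval $[\fp_i,\fp_j]\in\cI(f)$ must be \emph{covered} by some interval of $\cI(g)$ in the sense that there is a natural ring map $R_{\fp_j}/\fp_i\to R_{\fq}/\fk$ for the covering interval $[\fk,\fq]\in\cI(g)$, equivalently $\fk\subseteq\fp_i\subseteq\fp_j\subseteq\fq$. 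Since all primes of $A^{\star n}$ are idempotent (\Cref{prop:structureofprimes}), \emph{every} interval $[i,j]$ with $i\le j$ is admissible, so the constraints in \Cref{lem:intervals_keller=n} are the only ones in play. Thus the first step is to translate $[g]\le[f]$ into the purely combinatorial statement: $\cI(g)\le \cI(f)$ in $\interc(A^{\star n})$ (with our chosen ordering) if and only if every interval of $\cI(f)$ is contained in some interval of $\cI(g)$, and distinct intervals of $\cI(f)$ lying in the same interval of $\cI(g)$ must be ``consecutive'' in the sense compatible with \Cref{lem:intervals_keller=n}. This is essentially \Cref{lem:contracting} unwound, so it is routine.

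The second step is to analyze what it means for such a relation $\cI(g)<\cI(f)$ (strict) to be \emph{minimal}, i.e.\ to admit no $\cI(h)$ with $\cI(g)<\cI(h)<\cI(f)$. I would consider the two combinatorial invariants attached to the covering data: the multiset of intervals of $\cI(f)$, and the partition of that multiset into blocks according to which interval of $\cI(g)$ covers them. Passing from $\cI(f)$ to $\cI(g)$ either (a) strictly enlarges some interval's ambient span without merging two intervals, or (b) merges two or more intervals of $\cI(f)$ into one. A short ``interpolation'' argument shows that any move of type (a) that changes the endpoints by more than a single ``unit'' factors through an intermediate chain, and likewise any move merging $\ge 3$ intervals, or merging two intervals that are \emph{not} adjacent (i.e.\ with a gap $\fp_{j_t}\subsetneq \fp_{i_{t+1}}$ that is not of the form $i_{t+1}=j_t+1$) factors through an intermediate: one can merge a sub-collection first, or enlarge in two stages. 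So minimality forces the move to be the smallest possible: either adjoin a single point interval $[i,i]$ (whence $\cI(f)\subsetneq\cI(g)$ and we are in the \textbf{subset rule}, noting that enlarging an existing non-point interval $[m,k]$ to $[m,k']$ with $k'<k$ always factors through intermediate stages and so is never minimal unless it is exactly the adjunction of a point), or glue exactly two \emph{adjacent} intervals $[m,k_1],[k_1+1,k]$ into $[m,k]$ (the \textbf{gluing rule}). I would make the interpolation precise by exhibiting, for a non-minimal move, an explicit intermediate chain in $\interc(A^{\star n})$ — this is where one must check the intermediate object still satisfies the constraints of \Cref{lem:intervals_keller=n}, which is immediate since all endpoints in question already occur among $\fp_0\subset\cdots\subset\fp_n$.

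The third and final step is to verify the converse: that each of the two listed moves genuinely produces a minimal factorization, i.e.\ there is nothing strictly in between. For the subset rule this is clear, since $|\cI(g)|=|\cI(f)|+1$ and any $\cI(h)$ with $\cI(f)\le\cI(h)\le\cI(g)$ — translating through the order — would have cardinality forcing $\cI(h)\in\{\cI(f),\cI(g)\}$; one must only check there is no ``enlargement'' chain sneaking in, but adding a point interval cannot be factored as two non-trivial moves because a point interval is inclusion-minimal among admissible intervals. For the gluing rule, any intermediate $\cI(h)$ would have to cover $[m,k_1]$ and $[k_1+1,k]$ either separately (forcing $\cI(h)=\cI(f)$ up to the untouched part) or jointly (forcing the single interval $[m,k]$, hence $\cI(h)=\cI(g)$), using that $[m,k]$ is the unique interval containing both and that any strictly smaller common container does not exist by \Cref{lem:intervals_keller=n}.

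The main obstacle I anticipate is bookkeeping in the second step: correctly enumerating the ways a general factorization decomposes and producing the interpolating chain in each case while staying inside $\interc(A^{\star n})$. The subtlety is that ``minimal'' is with respect to the frame order on $\Sm(\sfD(A^{\star n}))$, which is order-\emph{reversed} on chains (\Cref{rem:frameordering}), so one must be careful that ``minimal factorization $[g]\le[f]$'' corresponds to a \emph{covering relation} $\cI(f)\lessdot\cI(g)$ in the chain poset, not the other way around; getting this variance right is the crux. Once the dictionary is pinned down, the rest is a finite case check made trivial by idempotence of all primes.
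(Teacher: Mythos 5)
Your plan is, in substance, the same proof the paper gives: both split on whether $\cI(f)\subseteq\cI(g)$, both use \cref{lem:contracting} as the sole factorization criterion, and both show non-minimality by exhibiting an explicit interpolating chain/ring (inserting the point interval $[m,m]$ in the subset case; refining the partition of $[m,k]$ in the gluing case). Your only genuine departure is presentational — phrasing the argument as identifying covering relations in the poset $\interc(A^{\star n})$ rather than chasing the ring maps $p, p', p''$ through products of $A^{\star n}_{\fp_j}/\fp_i$ as the paper does — and you correctly flag the key variance pitfall; just beware that your cardinality heuristic for the subset-rule converse is unsound (the order on $\interc$ is not monotone in $|\cI|$, e.g.\ $\{[0,0],[1,1]\}\le\{[0,1]\}$), though your fallback via inclusion-minimality of point intervals does repair it.
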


\begin{proof}
   Let us start proving the subset rule. If $\cI(f)\subseteq \cI(g)$, there is a canonical morphism $p\colon S^\prime \to S$ yielding a factorization of $f$ and $g$, such that $[g]\le[f]$. Evidently, such factorization cannot be minimal unless $\cI(f)$ differs $\cI(g)$ by one element: Assume there is a disjoint union $\cI(f)\cup\{[m,k]\}=\cI(g)$ for some $0\le m\le k\le n$. We claim that then the factorization witnessed by $p$ is minimal if and only if $m=k$, i.e., $S\times k(\fp_k)=S^\prime$. Assume $m\neq k$. Then the factorization given by $p$ is not minimal, since \Cref{lem:contracting} allows us to factorize $p$ as follows:
    \[
\begin{tikzcd}
S^\prime=S\times A^{\star n}_{\fp_k}/\fp_m \arrow[rr] \arrow[rrrr, "p", bend left=17] &  & S\times k(\fp_m) \arrow[rr] &  & S.
\end{tikzcd}
    \]
    For the other direction assume $m=k$, and in order to prove $p$ being a minimal factorization, assume $[g]\le [h]\le [f]$, i.e., there exists a commutative diagram as follows:
\begin{eqnarray}\label{eqn:cDiagramGeneralizedKellerProof}
\begin{tikzcd}
S\times k(\fp_m) \arrow[r, "p^\prime"'] \arrow[rr, "p", bend left] & T \arrow[r, "p^{\prime\prime}"']                     & S \\
                                                                   & A^{\star n}. \arrow[ru, "f"'] \arrow[lu, "g"] \arrow[u, "h"] &  
\end{tikzcd}
\end{eqnarray}
    If $T=0$, then $S=0$; in which case the factorization is indeed minimal. Otherwise, \Cref{constr:interc} tells us that $S$ and $T$ can be written as the product of rings of the form $A^{\star n}_{\fp_j}/\fp_i$, and that the morphisms are products of natural maps into these factors. More importantly, the commutativity of the diagram in \eqref{eqn:cDiagramGeneralizedKellerProof} forces that $p$ and $p^{\prime\prime}$ map to the same factors. Since $p$ maps to every factor, so must $p^{\prime\prime}$. Additionally,  $p^\prime$ has to map to every factor of $T$, since $h$ does and due to the commutativity of \eqref{eqn:cDiagramGeneralizedKellerProof}. Restricting $p$ to $S$ shows that the restriction of $p^\prime$ (resp., $p^{\prime\prime}$) is injective (resp., surjective). Hence, $T$ must contain $S$ as a subring. Assume $T$ consists of at least another factor, or in other words, $T=S\times P=S\times \prod_\alpha A^{\star n}_{\fp_{j_\alpha}}/\fp_{i_\alpha}$ and $p^\prime=(\mathrm{id}_S,\phi)$, where $\phi\colon S\times k(\fp_m)\to P$. The universal property gives us projection maps $\phi_S,\phi_{k(\fp_m)}$. Without loss of generality, it suffices to assume $\phi_S=0$ (since $p^\prime=(\mathrm{id}_S,\phi_S+\mathrm{id}_P)\circ (\mathrm{id}_S,\phi_{k(\fp_m)})$). Hence, $\phi\colon k(\fp_m)\to P$ gives rise to maps $\phi_\alpha$ which map into each factor of $P$ by the universal property of the product. By \Cref{lem:contracting}, each factor of $P$ has to be of the form $k(\fp_m)$. Since $T$ cannot contain duplicate factors, this implies $P=k(\fp_m)$ as desired, i.e., $[g]=[h]$.

    In order to prove the gluing rule, assume that $\cI(f)\not\subseteq\cI(g)$ and $[g]\le[f]$, i.e., there exists a factorization $p\circ g=f$. Then $\cI(f)\supseteq\cI(g)$ cannot hold, since the subset rule would tell us $[g]=[f]$. So there are chains $[m,k]\in\cI(g),[m_1,k_1]\in\cI(f)$, which do not lie in $\cI(f)\cap\cI(g)$. Letting $\tilde S=\prod_{[\fraki,\fp]\in \cI(f)\cap\cI(g)}A^{\star n}_{\fp}/\fraki$, we can split off $S^\prime$ and $S$ into their common factors to obtain the following commutative diagram for non-trivial products $P,Q$:
    \begin{eqnarray}\label{eqn:cDiagramGeneralizedKellerProof2}
    \begin{tikzcd}
        S^\prime=\tilde S\times P \arrow[rr, "p"] &                                              &  S=\tilde     S\times Q \\
                                          & A^{\star n} \arrow[lu, "g"] \arrow[ru, "f"'] &                   
    \end{tikzcd}
    \end{eqnarray}
    Let us now assume that $p$ is a minimal factorization. Then $p$ restricted to the factor $\tilde S$ can not map into any factor of $Q$, since this would yield a smaller factorization. Hence, $p=(\mathrm{id}_{\tilde S},\phi)$, where $\phi\colon \tilde S\times P\to Q$. Yet again, we can assume without loss of generality that $\phi\colon P\to Q$, as done in \eqref{eqn:cDiagramGeneralizedKellerProof}. We claim that $P$ consists of one factor, and $Q$ of precisely two.

    \indent
    If $P$ contained two or more factors, let us say $P_1,P_2$, we know that they are not factors in $Q$. However, they map to some factor in $Q$, or none. If any, let us say $P_1$, does not map to any factor of $Q$ via $\phi$, $p$ cannot be minimal: $T$ containing all factors of $P$ besides $P_1$ factorizes $p\colon\tilde S\times P\to \tilde S\times T\to \tilde S\times Q$. So $P_1,P_2$ map to $Q$ non-trivially. Hence, they must be mapped to different factors of $Q$ via $\phi$. A similar factorization as done with $T$ shows that $p$ cannot be minimal. So we must have $P=A^{\star n}_{\fp_{k}}/\fp_{m}$. As for $Q$, assume it contains three or more factors, i.e., there are different $[m_1,k_1],[m_2,k_2],[m_3,k_3]\in \cI(f)\setminus \cI(g)$. Without loss of generality, they are already linearly ordered in $\inter(A^{\star n})$. Since $P$ maps to every factor which corresponds to these intervals, \Cref{lem:contracting} implies $m\le m_1\le k_1<m_2\le k_2<m_3\le k_3\le k$.
    By \Cref{lem:contracting}, we obtain maps as depicted in \Cref{fig:generalKellerProofGlueingTwice}, which in turn disproves $p$ being a minimal factorization.
    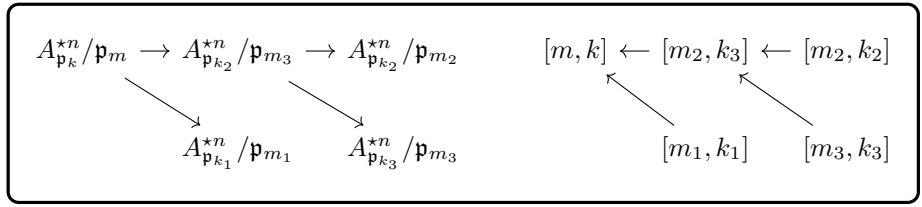
\begin{figure}[h]
        \centering
\begin{tikzcd}[style={every outer matrix/.append style={draw=black, inner xsep=6pt , inner ysep=9pt, rounded corners, very thick}},row sep=0.6cm,column sep=0.4cm]
A^{\star n}_{\fp_{k}}/\fp_{m} \arrow[r] \arrow[rd] & A^{\star n}_{\fp_{k_2}}/\fp_{m_3} \arrow[rd] \arrow[r] & A^{\star n}_{\fp_{k_2}}/\fp_{m_2} &   & {[m,k]} & {[m_2,k_3]} \arrow[l] & {[m_2,k_2]} \arrow[l]\\
                                                   & A^{\star n}_{\fp_{k_1}}/\fp_{m_1}                      & A^{\star n}_{\fp_{k_3}}/\fp_{m_3} &    &                              & {[m_1,k_1]}\arrow[lu]                      & {[m_3,k_3]}\arrow[lu]
\end{tikzcd}
   \caption{Canonical maps yielding further factorizations next to the corresponding intervals, whenever $m\le m_1\le k_1<m_2\le k_2<m_3\le k_3\le k$.}
        \label{fig:generalKellerProofGlueingTwice}
    \end{figure}
    
    Hence, $Q$ can not contain three or more factors. Now assume $Q$ contains a single factor, i.e., $[m_1,k_1]\in\cI(f)$. By \Cref{lem:contracting}, we have $m\le m_1\le k_1\le k$. Since these intervals are not equal, assume without loss of generality $m<m_1$. Then using \Cref{lem:contracting}, one can factor $\phi$ as $P\to k(\fp_m)\times Q\to Q$, which implies $p$ can not be a minimal factorization.

    \indent
    To summarize, we know that in the situation of \eqref{eqn:cDiagramGeneralizedKellerProof2} the following must hold:
    \[
    P=A^{\star n}_{\fp_{k}}/\fp_{m}, Q=A^{\star n}_{\fp_{k_1}}/\fp_{m_1}\times A^{\star n}_{\fp_{k_2}}/\fp_{m_2},\enspace m\le m_1\le k_1<m_2\le k_2\le k.
    \]
     By \Cref{fig:generalKellerProofGlueingTwice}, any number in between $k_1<m_2$ gives rise to a factorization of $p$, showing $p$ can not minimal. So $m_2$ must be the successor of $k_1$. Similar reasoning shows that we must have $m=m_1,k_2=k$, or in other words, the chains are of the form $[m,k]\in\cI(g)\setminus\cI(f)$ and $[m,k_1],[k_1+1,k]\in\cI(f)\setminus\cI(g)$.

     We are left to show that the gluing rule yields a minimal factorization. Consider the homological epimorphism $g\colon A^{\star n}\to S^\prime$, which corresponds to the chain $\{[m,k]\}\cup\cI(f)\setminus\{[m,k_1],[k_1+1,k]\}$. Then we certainly have a factorization by \Cref{lem:contracting}, which is the identity on intervals in $\cI(f)\cap\cI(g)$, and the factor corresponding to $[m,k]$ maps to factors of $[m,k_1],[k_1+1,k]$. Let $\tilde S$ correspond to the common factors of $S$ and $S^\prime$. To show that the factorization is minimal, assume $[g]\le [h]\le [f]$, i.e., the following diagram commutes:
    \begin{eqnarray}\label{eqn:cDiagramGeneralizedKellerProof3}
    \begin{tikzcd}
    S^\prime =\tilde S\times A^{\star n}_{\fp_k}/\fp_m\arrow[r, "p^\prime"'] \arrow[rr, "p", bend left=15] & T \arrow[r, "p^{\prime\prime}"']                     & \tilde S \times A^{\star n}_{\fp_{k_1}}/\fp_{m}\times A^{\star n}_{\fp_{k}}/\fp_{k_1+1}=S \\
                                                                   & A^{\star n}. \arrow[ru, "f"'] \arrow[lu, "g"] \arrow[u, "h"] &  
    \end{tikzcd}
    \end{eqnarray}
    We immediately notice that $T\neq0$. Similar reasoning as done in the case of \eqref{eqn:cDiagramGeneralizedKellerProof} shows that $T$ contains $S$ as a subring. In particular, we may restrict $p^\prime$ to a map $\phi\colon A^{\star n}_{\fp_k}/\fp_m \to P$, where $P$ is the product satisfying $T=\tilde S\times P$. \Cref{lem:contracting} tells us that this map is determined by partitions of $[m,k]$, because $p$ maps to $ A^{\star n}_{\fp_{k_1}}/\fp_{m}\times A^{\star n}_{\fp_{k}}/\fp_{k_1+1}$. If there are three or more blocks in these, they cannot map to $ A^{\star n}_{\fp_{k_1}}/\fp_{m}\times A^{\star n}_{\fp_{k}}/\fp_{k_1+1}$ by \Cref{lem:contracting}. Similarly, if there are two blocks or one, any partition than $\{[m,k]\}$ or $\{[m,k_1],[k_1+1,k]\}$ can not map to $S$ while mapping to all factors. But this precisely means $T=S^\prime$ or $T=S$, which proves that $p$ is a minimal factorization.
\end{proof}

Using this description of the minimal factorizations, we can now identify the meet-prime elements and as such the smashing spectrum of $A^{\star n}$.
\begin{theorem}\label{thm:smashing_spectrum_n}
Let $n \geq 1$.  Then $|\Spc^\mathrm{s}(\sfD(A^{\star n}))| = 2n+1$ and has a topology described as
    \[
        \begin{tikzcd}[style={every outer matrix/.append style={draw=black, inner xsep=6pt , inner ysep=9pt, rounded corners, very thick}},row sep=0.4cm,column sep=-0.5cm]
{[\fp_n,\fp_n]} && {[\fp_{n-1},\fp_{n-1}]} && {[\fp_{n-2},\fp_{n-2}]} &  \quad \cdots \quad & {[\fp_{1},\fp_{1}]} && {[(0),(0)]}\\
& {[\fp_{n-1},\fp_{n}]} \arrow[ru, rightsquigarrow] \arrow[lu, rightsquigarrow] && \arrow[ru, rightsquigarrow] \arrow[lu, rightsquigarrow]  [\fp_{n-2},\fp_{n-1}] &&&& \arrow[ru, rightsquigarrow] \arrow[lu, rightsquigarrow]  [(0),\fp_{1}] \end{tikzcd}
\]
where the top points are closed, the bottom points are open, and the arrows signify specialization closure.
\end{theorem}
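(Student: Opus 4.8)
The strategy is to identify the meet-prime elements of $\Sm(\sfD(A^{\star n}))$ directly from the combinatorial description of $\interc(A^{\star n})$ in \cref{lem:intervals_keller=n}, using the translation (noted in the remark after \cref{prop:meet_and_join}) that meet-primes of $\Sm(\sfD(R))$ correspond to join-primes of $\interc(R)\cup\{\varnothing\}$, together with \cref{prop:minimal_fact} which tells us exactly which covering relations appear in the Hasse diagram of $\interc(A^{\star n})$ (the subset rule adjoins a singleton interval $[i,i]$, the gluing rule merges two adjacent intervals $[m,k_1],[k_1+1,k]$ into $[m,k]$). First I would determine the join-prime chains: a chain $\cI$ is join-prime precisely when it is covered from below by at most one element, equivalently when there is exactly one way to ``shrink'' it via the dual moves --- removing a singleton or splitting an interval. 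I claim these are exactly the singleton chains $\{[\fp_{i},\fp_i]\}$ for $0\le i\le n$ (the closed points, of which there are $n+1$) and the chains $\{[\fp_{i-1},\fp_i]\}$ for $1\le i\le n$ consisting of a single length-one interval (the open points, of which there are $n$), giving the count $|\Spc^\mathrm{s}(\sfD(A^{\star n}))| = (n+1)+n = 2n+1$.

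\textbf{Verifying join-primality.} For the singleton chains $\{[\fp_i,\fp_i]\}$: the only chain below it in the Hasse diagram is $\varnothing$ (via the dual subset rule, removing the singleton $[i,i]$; no gluing applies to a single singleton). Hence it has a unique lower cover, so it is join-irreducible, and since $\interc(A^{\star n})\cup\{\varnothing\}$ is finite and distributive (being the frame of opens of a finite $T_0$ space, or directly: meets and joins are given by the concrete formulas of \cref{lem:constructing_new_chains}), join-irreducible equals join-prime. For $\{[\fp_{i-1},\fp_i]\}$: the dual moves available are the gluing split into $\{[\fp_{i-1},\fp_{i-1}],[\fp_i,\fp_i]\}$ --- but wait, one must check this split is actually a \emph{lower} cover, and that no dual-subset move (removing a singleton, impossible here as there is no singleton sub-interval to remove) or other split applies; the interval $[\fp_{i-1},\fp_i]$ has ``length one'' in the sense that it can only be split at $k_1 = i-1$, so there is a unique split, hence a unique lower cover, hence join-prime. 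Conversely, any chain with $k\ge 2$ intervals admits two independent downward moves (shrink in two different components), and a singleton-plus-interval chain like $\{[\fp_i,\fp_i]\}\cup\{[\fp_{j-1},\fp_j]\}$ with the pieces disjoint again admits two moves; so these are not join-prime. This classification should be a short but careful case-check.

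\textbf{Identifying the topology.} Having listed the points, I would compute the specialization order. In the frame of opens of a finite space, a point $x$ specializes to $y$ (i.e.\ $y \in \overline{\{x\}}$) iff every open containing $y$ contains $x$, which under Stone duality translates to: the join-prime $p_y$ is $\le$ the join-prime $p_x$ in $\interc(A^{\star n})\cup\{\varnothing\}$. So I need the order relations among the $2n+1$ join-primes. Using $[\fp_{i-1},\fp_i] \le [\fp_{i-1},\fp_{i-1}]$ and $[\fp_{i-1},\fp_i]\le [\fp_i,\fp_i]$ (contract the interval to either endpoint --- this is exactly \cref{lem:contracting}, giving factorizations, hence order relations in $\interc$ after the order-reversal convention of \cref{rem:frameordering}), and checking there are no other relations among these particular elements (a singleton $[\fp_i,\fp_i]$ and $[\fp_{j-1},\fp_j]$ are incomparable unless $j\in\{i,i+1\}$; two distinct singletons are incomparable; two distinct length-one intervals are incomparable), we obtain the ``zigzag'' poset in the statement: each open point $[\fp_{i-1},\fp_i]$ lies below exactly the two closed points $[\fp_{i-1},\fp_{i-1}]$ and $[\fp_i,\fp_i]$, and nothing else is comparable. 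Translating back: the closed points of the space are the maximal join-primes $\{[\fp_i,\fp_i]\}$ and the open points are the $\{[\fp_{i-1},\fp_i]\}$, with an arrow $[\fp_{i-1},\fp_i]\rightsquigarrow [\fp_{j},\fp_j]$ for $j\in\{i-1,i\}$, which is precisely the displayed diagram.

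\textbf{Main obstacle.} The routine parts are the endpoint contractions and the incomparability checks. The genuinely delicate step is establishing that the enumerated chains are \emph{exactly} the join-primes --- in particular confirming that $\{[\fp_{i-1},\fp_i]\}$ has only the one lower cover (one must rule out that removing or altering part of it in some other legal way in $\interc(A^{\star n})$ produces a second incomparable lower cover) and that \emph{no} chain with $k\ge 2$ or with a singleton summand is join-prime. This amounts to a clean application of \cref{prop:minimal_fact}: each such chain visibly admits two distinct minimal factorizations downward (operate on two different blocks), so it is a join of two strictly smaller elements. I would organize this as: (i) every $\cI$ with $\ge 2$ blocks is a join of the chains obtained by independently shrinking each block to $\varnothing$ on that block, hence not join-prime; (ii) a chain $\{[\fp_i,\fp_i]\}\cup\cI'$ with $\cI'\neq\varnothing$ is the join of $\{[\fp_i,\fp_i]\}$ and $\cI'$ (the meet $\cI\lor$-side vanishing on the disjoint parts by \cref{lem:constructing_new_chains}), hence not join-prime; (iii) what remains is singletons and single length-one intervals, and (i)--(ii) plus the direct Hasse analysis show these \emph{are} join-prime. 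The Fibonacci count of \cref{cor:counting} is not needed here, but serves as a useful consistency check on the Hasse diagram.
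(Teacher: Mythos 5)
Your proposal is correct and matches the paper's own argument: both identify the meet-primes of $\Sm(\sfD(A^{\star n}))$ by locating the join-prime chains in $\interc(A^{\star n})\cup\{\varnothing\}$ via the minimal-factorization characterization of \cref{prop:minimal_fact}, ruling out multi-interval chains as nontrivial joins, singling out $\{[\fp_i,\fp_i]\}$ (unique lower cover $\varnothing$ via the subset rule) and $\{[\fp_{i-1},\fp_i]\}$ (unique split via the gluing rule), and then reading off the specialization order from \cref{lem:contracting} and the order-reversing convention. The only cosmetic difference is that you spell out the join-irreducible $=$ join-prime reduction for finite distributive lattices, which the paper uses implicitly.
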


\begin{proof}
To find the meet-prime elements of $\Sm(\sfD(A^{\star n}))$, it suffices to find those $\mathcal{I} \in \interc(A^{\star n})$ which are only minimally included in one element. Evidently, $\cI$ can not contain more than one element, as otherwise $\cI$ can be written as $\cI_1\lor \cI_2$, which corresponds to a meet in $\home(A^{\star n})$, by \Cref{prop:meet_and_join}. The minimal inclusions are characterized by the subset and gluing rule, see \Cref{prop:minimal_fact}. Let $\cI=\{[\fp_m,\fp_k]\}$, for $0\le m\le k\le n$. If $m=k$, the subset rule witnesses the minimal inclusions $\varnothing\subset\{[\fp_m,\fp_m]\}$, hence, $\{[\fp_m,\fp_m]\}$ is a join-prime. In other words, its corresponding class of homological epimorphisms is a meet-prime in $\home(A^{\star n})$ (due to the order-reversing bijection mentioned in \Cref{rem:frameordering}). Furthermore, the gluing rule tells us that the general singleton $\cI=\{[\fp_m,\fp_k]\}$ with $m\neq k$ arises only from tuples $\{[m,k_1],[k_1+1,k]\}$, where $m\le k_1<k$. As $\cI$ can only admit one such minimal inclusion, we must have $m=k_1,k_1+1=k$, i.e., $\cI=\{[m,m+1]\}$, for $0\le m<n$. Hence, $|\Spc^\mathrm{s}(\sfD(A^{\star n}))| = 2n+1$.

\indent
As for the specialization closure, we will show the more general statement $\cI\rightsquigarrow\cJ$ if and only if $[f_{(\cI)} ]\le[f_{(\cJ)}]$ in $\home(A^{\star n})$, where we use the notation of the bijection in \Cref{thm:bijectionsofframes}. In conjunction with \Cref{lem:contracting}, this proves the claimed specializations. In the following, let $\sfI,\sfJ$ denote their corresponding smashing ideal in $\Sm(\sfD(A^{\star n}))$ using the bijections of \Cref{thm:bijectionsofframes}. Then $\cI\rightsquigarrow\cJ$ if and only if
\begin{eqnarray}\label{eqn:smashing_spectrum}
    \sfJ\in D(a) \mbox{ for some }a\in\Sm(\sfD(A^{\star n}))\implies \sfI\in D(a),
\end{eqnarray}
where $D(a)$ is the basic open associated to the frame element $a$. Recall that by definition $D(a)=\{p\in \Spc^\mathrm{s}(\sfD(A^{\star n}))\mid a\not\le p\}$. We see that \eqref{eqn:smashing_spectrum} holds true, as otherwise $a\le \sfI\le\sfJ$ leads to a contradiction.
\end{proof}

The only remaining thing to discuss is the comparison map $\Spc^\mathrm{s}(\mathsf{D}(A^{\star n})) \to \Spc(\mathsf{D}(A^{\star n})^\omega)^\vee$ in terms of elements of $\interc(A^{\star n})$. Recall that the compactly generated ideals are those of the form $[(0), \fp_i]$. Then by unravelling the relevant definitions, we arrive at the following:

\begin{lemma}
Let $n \geq 1$. Then the comparison map $\Spc^\mathrm{s}(\mathsf{D}(A^{\star n})) \to \Spc(\mathsf{D}(A^{\star n})^\omega)^\vee$ is defined on points at the level of chains as:
\begin{itemize}
	\item $[\fp_i, \fp_i] \mapsto [(0), \fp_i]$.
	\item $[\fp_{i-1}, \fp_i] \mapsto [(0), \fp_i]$.
\end{itemize}
\end{lemma}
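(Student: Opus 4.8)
The plan is to trace through the bijections of \Cref{thm:bijectionsofframes} together with the explicit description of the comparison map recalled in the introduction, namely that $\Spec(\mathrm{infl})$ sends a prime smashing ideal $\sfP$ to $\sfP \cap \sfD(R)^\omega$, which under the identifications with chains corresponds to sending $\cI(f)$ to $\cI(f')$ where $f'$ is the flat part of the homological epimorphism $f$. Equivalently, since $\Th(\sfD(A^{\star n})^\omega)$ corresponds to chains of the form $\{[(0),\fp]\}$ or $\varnothing$ by the last display of \Cref{thm:bijectionsofframes}, the comparison map takes a point $\cI$ of $\Spc^\mathrm{s}(\sfD(A^{\star n}))$ to the unique minimal compactly generated smashing ideal lying above it; on the level of chains (which is order-reversing into $\interc$) this means sending $\cI$ to the \emph{largest} chain of the form $\{[(0),\fp_i]\}$ contained in $\cI$ in the sense of the frame order, i.e. the image of $\cI$ under the frame map $\interc(A^{\star n}) \to \{[(0),\fp]\} \cup \{\varnothing\}$ dual to $\mathrm{infl}$.

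The key steps, in order: first, identify the two classes of meet-prime points explicitly from \Cref{thm:smashing_spectrum_n}: the points $[\fp_i,\fp_i]$ for $0 \le i \le n$ and the points $[\fp_{i-1},\fp_i]$ for $1 \le i \le n$. Second, for a point $\cI = \{[\fp_i,\fp_i]\}$, compute its image: the smashing ideal corresponding to the codomain $k(\fp_i) = A^{\star n}_{\fp_i}/\fp_i$, and we must find the smallest flat epimorphism $A^{\star n} \to S'$ admitting a factorization through $A^{\star n} \to k(\fp_i)$. By \Cref{lem:contracting} the natural map $A^{\star n}_{\fp_i}/{(0)} = A^{\star n}_{\fp_i} \to A^{\star n}_{\fp_i}/\fp_i = k(\fp_i)$ exists, and $A^{\star n} \to A^{\star n}_{\fp_i}$ is a flat epimorphism (localization), corresponding to the chain $\{[(0),\fp_i]\}$; since any compactly generated smashing ideal above $\{[\fp_i,\fp_i]\}$ corresponds to a chain $\{[(0),\fp_j]\}$ with a factorization $A^{\star n}_{\fp_j} \to k(\fp_i)$, \Cref{lem:contracting} forces $(0) \subseteq (0) \subseteq \fp_i \subseteq \fp_j$, hence $\fp_i \subseteq \fp_j$ and by minimality $j = i$. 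Thus $[\fp_i,\fp_i] \mapsto [(0),\fp_i]$. Third, for a point $\cI = \{[\fp_{i-1},\fp_i]\}$, the codomain is $A^{\star n}_{\fp_i}/\fp_{i-1}$; the relevant flat localizations above it correspond to chains $\{[(0),\fp_j]\}$ with a map $A^{\star n}_{\fp_j} \to A^{\star n}_{\fp_i}/\fp_{i-1}$, which by \Cref{lem:contracting} requires $\fp_i \subseteq \fp_j$; the minimal such is again $j = i$, giving $[\fp_{i-1},\fp_i] \mapsto [(0),\fp_i]$.

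I expect the main obstacle to be purely bookkeeping rather than conceptual: one must be careful about the order-reversal between $\home(A^{\star n})$ and $\interc(A^{\star n})$ emphasised in \Cref{rem:frameordering}, so that ``the minimal compactly generated smashing ideal above $\sfP$'' translates to ``the maximal flat chain below $\cI$'', and about the precise meaning of the comparison map $\Spec(\mathrm{infl})$ — it is the frame map $\Th \to \Sm$ inflating a thick ideal, whose Stone dual sends a prime smashing ideal $\sfP$ to $\sfP \cap \sfD(A^{\star n})^\omega$, as recalled from \cite[Theorem 5.1.2]{balchin2023big}. Once these identifications are pinned down, both cases reduce to a single application of \Cref{lem:contracting} determining which localizations $A^{\star n} \to A^{\star n}_{\fp_j}$ admit a factorization to the given residue-field-type ring, and the answer is $j = i$ in both cases because the codomain in each case has $\fp_i$ (not $\fp_{i-1}$) as the prime to which one localizes.
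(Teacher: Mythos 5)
The paper does not actually write a proof of this lemma, asserting only that it follows ``by unravelling the relevant definitions''; your explicit verification via \Cref{lem:contracting} is the kind of computation the authors are gesturing at, and it does land on the correct answer. However, your preliminary description of the comparison map is inverted in a way worth fixing. The map $\Spec(\mathrm{infl})$ sends a prime smashing ideal $\sfP$ to $\sfP \cap \sfD(A^{\star n})^\omega$, which is the \emph{largest} compactly generated smashing ideal \emph{contained in} $\sfP$ (i.e.\ the right adjoint of $\mathrm{infl}$ evaluated at $\sfP$), not the ``minimal compactly generated smashing ideal lying above it''. Translated through the order-reversing bijection with $\interc(A^{\star n})\cup\{\varnothing\}$, this is the \emph{smallest} flat chain $\{[(0),\fp_j]\}$ lying \emph{above} $\cI$ in the refinement order, not the largest flat chain contained in $\cI$; the latter would be $\varnothing$ for every $i\ge 1$, which is visibly the wrong answer.

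Fortunately your actual computation tracks the correct direction despite the framing: you look for the minimal index $j$ such that there is a factorization $A^{\star n}\to A^{\star n}_{\fp_j}\to S$, with $S$ the codomain of the homological epimorphism representing $\cI$. Via \Cref{lem:contracting} this requires $\fp_i\subseteq\fp_j$ in both cases, so the minimal such $j$ is $i$. Since minimal $j$ corresponds to the \emph{largest} flat class in $\home(A^{\star n})$ below $[A^{\star n}\to S]$, equivalently the smallest flat chain above $\cI$, this is precisely the quantity $\sfP\cap\sfD(A^{\star n})^\omega$ demands, and both bullet points come out to $[(0),\fp_i]$ as claimed. You correctly anticipated that the ``above/below'' bookkeeping through the order-reversal of \Cref{rem:frameordering} would be the main pitfall; the computation survives it, but the introductory paragraph of your argument should be corrected to say ``maximal contained in'' rather than ``minimal above'' so the two halves agree.
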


\section{Mixing idempotent and non-idempotent ideals}\label{sect:mixing}

In the previous section, we constructed a family of rings $A^{\star n}$ and discussed the tensor-triangular properties of $\sfD(A^{\star n})$. In this section, we will construct a “model” for any valuation domain with finite Zariski spectrum. By $A^{\star n}$ we denoted a valuation domain with Krull dimension $n$, such that all of its prime ideals are idempotent. One also has the other extreme, where none of the non-zero prime ideals are idempotent. The goal of this section is to study those rings with a finite, totally ordered spectrum, such that the idempotence of their prime ideals is prescribed. Determining the smashing frame of those rings, yields all possible smashing frames which can arise from valuation domains with finite Krull dimension.

\begin{proposition}\label{prop:mixedKeller}
Let $n \geq 1$, $\idem\colon \N_{\le n}\to\F_2$ be any map with $\idem(0)=1$, and $k$ any field. Then there exists a valuation domain $A_{\idem}$ (with value group $G_\idem$), such that its chain of all prime ideals 
\[
(0)=\fp_0\subset\fp_1\subset\hdots\subset \fp_n
\]
satisfies the following: $\fp_j$ is idempotent if and only if $\idem(j)=1$, non-idempotent if and only if $\idem(j)=0$, and $A_{\idem}/\fp_n\cong k$. 
\end{proposition}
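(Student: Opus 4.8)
The strategy is to realize the prescribed idempotence pattern on the group side, via \Cref{prop:idealsAndFilters}, and then invoke \Cref{prop:prescribed} to produce the ring. By \Cref{prop:idealsAndFilters}, for a valuation domain with value group $G$, the non-zero prime ideals correspond bijectively (and order-preservingly) to the prime filters of $G^+$, and this correspondence matches idempotent primes with idempotent filters. So it suffices to build a totally ordered abelian group $G_\idem$ whose positive cone $G_\idem^+$ has exactly $n$ prime filters $F_1\subsetneq F_2\subsetneq \dots \subsetneq F_n$, with $F_j$ idempotent precisely when $\idem(j)=1$; then set $A_\idem \coloneqq R_k(G_\idem)$ and read off the chain of primes $(0)=\fp_0 \subset \fp_1 \subset \dots \subset \fp_n$ with $\fp_j \leftrightarrow F_j$, which will have the desired idempotence by the restricted bijection, and residue field $k$ by \Cref{prop:prescribed}.

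First I would recall the two basic building blocks for a single "layer". From \Cref{ex:keller}, the group $\Z[\tfrac1\ell]$ has positive cone whose only proper prime filter behaviour yields an \emph{idempotent} maximal filter $(\Z[\tfrac1\ell])^+\setminus\{0\}$; this is the idempotent layer. For a \emph{non-idempotent} layer, take $\Z$: the filter $\Z^+\setminus\{0\} = \{1,2,3,\dots\}$ is principal (minimal element $1$), hence not idempotent. Now assemble $G_\idem$ as the lexicographic coproduct $\bigoplus_{j=1}^{n} H_j$ over the well-ordered index set $\{1 < 2 < \dots < n\}$ as in \Cref{def:lexicographicSupport}, where $H_j \coloneqq \Z[\tfrac1\ell]$ if $\idem(j)=1$ and $H_j \coloneqq \Z$ if $\idem(j)=0$. (The hypothesis $\idem(0)=1$ is a harmless normalization: $\fp_0=(0)$ is always idempotent, so it imposes no constraint on the $H_j$ for $j\ge 1$.)

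The key computation is then exactly the argument already run in \Cref{prop:structureofprimes}: for any upwards-closed $\varnothing \neq F \subsetneq G_\idem^+$, choosing $w \in G_\idem^+\setminus F$ of minimal support shows, when $F$ is prime, that $F = \{v \in G_\idem^+ \mid \supp(v) < \supp(w)\}$, so the prime filters are precisely $F_j \coloneqq \{v \in G_\idem^+ \mid \supp(v) < j+1\}$ for $1 \le j \le n$, nested as claimed. It remains to check that $F_j$ is idempotent iff $\idem(j)=1$. Idempotence of $F_j$ is tested on the truncation $\bigoplus_{k=1}^{j} H_k$ (using $\supp(v+w)=\min(\supp v,\supp w)$), and since a positive element there has its leading nonzero entry in some coordinate $k\le j$ with the tail in coordinates up to $j$ arbitrary, one reduces to splitting a positive element of $\bigoplus_{k=1}^{j}H_k$ as a sum of two positive elements coordinatewise: in any coordinate equal to a copy of $\Z[\tfrac1\ell]$ the splittings of \eqref{eq:thickening} work, exactly as in \Cref{prop:structureofprimes}. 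Conversely, if $\idem(j)=0$ then $H_j=\Z$, and the element $e_j \in G_\idem^+$ with a single $1$ in coordinate $j$ has support $j$, lies in $F_j$, yet cannot be written as a sum of two elements of $F_j$: such a decomposition would need both summands supported in coordinate $\ge j$ hence (being positive and summing to $e_j$) supported exactly in coordinate $j$, but $1\in\Z$ is not a sum of two positive integers. Hence $F_j$ is not idempotent, completing the correspondence.

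\textbf{Main obstacle.} The conceptual steps are all routine given \Cref{prop:idealsAndFilters}, \Cref{prop:prescribed}, and the template of \Cref{prop:structureofprimes}; the only genuine point requiring care is the "only if" direction of the idempotence dichotomy — verifying that a $\Z$-layer genuinely obstructs idempotence of the corresponding filter, and that this obstruction is not rescued by the higher-index coordinates. This is handled by the support/leading-term bookkeeping above, but it is where one must be attentive to the lexicographic order conventions of \Cref{def:lexicographicSupport}.
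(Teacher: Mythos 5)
Your proposal is correct and essentially reproduces the paper's proof: same value group $G_\idem = \bigoplus_{j=1}^n H_j$ with $H_j = \Z[\tfrac1\ell]$ or $\Z$ according to $\idem(j)$, same classification of prime filters by minimal support as in \cref{prop:structureofprimes}, and the same witness ($e_j$, the paper's $m^j$) with the $\supp(v+w)=\min(\supp v,\supp w)$ argument for the non-idempotent direction. One small imprecision in the idempotent direction: you write that the splitting can be done ``coordinatewise: in any coordinate equal to a copy of $\Z[\tfrac1\ell]$ the splittings of \eqref{eq:thickening} work,'' but this elides the case of a $\Z$-coordinate $k < j$ with $v_k$ positive but small (e.g.\ $v_k = 1$), where a positive coordinatewise split is impossible. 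The paper handles this by the ``borrow from the next coordinate'' trick ($v = (0,\dots,0,v_k,-1,0,\dots) + (0,\dots,0,1,0,\dots)$ and its variants), which works uniformly and only invokes $H_j = \Z[\tfrac1\ell]$ at the \emph{final} coordinate $j$ when $\supp(v) = j$; your appeal to \cref{prop:structureofprimes} does not quite cover this, since there all coordinates are $\Z[\tfrac1\ell]$. This is a local fix, not a gap in the overall strategy.
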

\begin{proof}
    Fix some $\ell\ge2$, and define for $1\le i\le n$ the totally ordered abelian group
    \[
    G_i\coloneqq\begin{cases}\Z, &\idem(i)=0,\\\Z\left[\frac{1}{\ell}\right], &\idem(i)=1.
    \end{cases}
    \]
    Then the value group of $A_\idem$ will be $G=G_\idem=\bigoplus\limits_{i=1}^{n}G_i$, endowed with the lexicographical product as described in \Cref{def:lexicographicSupport}. To prove the statement, we will classify the prime filters $F_j$ in $G^+$, which gives us the desired ring $A_{\idem}=R_k(G_\idem)$ by \Cref{prop:idealsAndFilters} and \Cref{prop:prescribed}. This is proven exactly as in the proof of \Cref{prop:structureofprimes}. Analogously as for the idempotence of these prime filters, we reduce to the following statement: 
\[
\mbox{Every }(v_1,\hdots,v_j)\in\left(\bigoplus\limits_{k=1}^jG_i\right)_{>0}\mbox{ can be written as the sum of two elements in }\left(\bigoplus\limits_{k=1}^jG_i\right)_{>0}.
\]
Assume $\supp(v)=k<j$, i.e., $v=(0,\hdots,0,v_k,v_{k+1},\hdots,v_j)$, and $v_k>0$. If there exists some $k<k^\prime\le j$ such that $v_{k^\prime}>0$, write
\[
v=(0,\hdots,0,v_k,v_{k+1},\hdots,v_{k^\prime-1},0,\hdots,0)+(0,\hdots,0,v_{k^\prime},\hdots,v_j).
\]
If there exists some $k<k^\prime\le j$ such that $v_{k^\prime}<0$, write
\[
v=(0,\hdots,0,v_k,v_{k+1},\hdots,v_{k^\prime-1},2v_{k^\prime},0,\hdots,0)+(0,\hdots,0,-v_{k^\prime},\hdots,v_j).
\]
If $v_k$ is the only non-zero entry, we may write $v=(0,\hdots,0,v_k,-1,0\hdots,0)+(0,\hdots,0,1,0\hdots,0)$. Finally, the case $\supp(v)=j$ follows from the fact that $G_j=\Z\left[\frac{1}{\ell}\right]$, and our calculation in \Cref{ex:keller=2}, see \eqref{eq:thickening}. 

We are left to show why the prime filters $F_j$ are not idempotent for all $j$ with $\idem(j)=0$. To this end, we will exhibit an element in $F_j$ which cannot be written as the sum of two elements in $F_j$. For $j\le n$ consider the element $m^j=(m_1^j,\hdots)\in G$, which is defined as
\[
m^j_i=\begin{cases}1,&\mbox{if}\,\, i=j\\0,&\mbox{else}.\end{cases}
\]
This element cannot be written as the sum of two elements in $F_j$ for all $j$ with $\idem(j)=0$ due to the discreteness of the summand $\Z$ at index $j$, and since any $v,w\in F_j\subset G^+$ satisfy $\supp(v+w)=\min(\supp(v),\supp(w))$, see \Cref{def:lexicographicSupport}. Thus, $F_j$ is non-idempotent.
\end{proof}
\begin{remark}
    The analogous statement of \Cref{prop:mixedKeller} holds for $n=\infty, \idem\colon\N_{\geq 1}\cup\{\infty\}\to\F_2$, where $\idem(0)=\idem(\infty)=1$. Indeed, the maximal ideal $\fp_\infty$ will always be idempotent, as the union of a strictly ascending chain of prime ideals is always an idempotent prime ideal.
\end{remark}

We see that \Cref{ex:keller} (resp., \Cref{ex:keller=2}) corresponds to the case $n=1$ (resp., $n=2$) and the map $\idem\colon\N_{\le n}\to \F_2$ being constantly $1$. We will discuss a non-constant example thoroughly in \Cref{ex:mixed}.

A key ingredient for determining the smashing spectrum of $\sfD(A^{\star n})$ were the minimal factorizations between homological epimorphisms with domain $A^{\star n}$. Hence, to generalise the former, we will also generalise the latter. The case $\idem\equiv 1$ in the following result recovers the subset and gluing rule as proven in \Cref{prop:minimal_fact}.
\begin{proposition}\label{prop:minimal_fact_mixed}
Let $n\in\N_{\ge1}$ and let $\idem\colon \N_{\le n}\to\F_2$ be any map with $\idem(0)=1$, $k$ any field. Let $f\colon A_\idem\to S$, $g\colon A_\idem\to S^\prime$ be homological epimorphisms. If $[f]\neq[g]$, then there is a minimal factorization $[g]\le[f]$ if and only if there is solely one interval $[m,k]\in\cI(g)\setminus\cI(f)$ and a number $m\le k_1\le k$, such that the chain $I$ given by
        \[
        I=\begin{cases}
            \varnothing, &\mbox{ if }m=k,\\
            \{[m,k-1]\}, &\mbox{ if }k-1=k_1, [m,k-1]\in\cI(f), [k,k]\not\in\cI(f),\\
            \{[m,k_1],[k_1+1,k]\}, &\mbox{ if }m\le k_1<k, \mbox{ and } I\subseteq\cI(f),
        \end{cases}
        \]
        exists, and satisfies $\cI(g)=\{[m,k]\}\cup(\cI(f)\setminus I)$.
\end{proposition}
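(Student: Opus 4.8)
The plan is to imitate the proof of \Cref{prop:minimal_fact} almost verbatim, tracking where idempotence was silently used and replacing it with the hypothesis $\idem(i)=1$ (equivalently, $\fp_i$ idempotent) where needed. The underlying combinatorial engine remains \Cref{lem:contracting}: a natural map $A_\idem{}_{\fp_1}/\fraki_1\to A_\idem{}_{\fp_2}/\fraki_2$ exists iff $\fraki_1\subseteq\fraki_2\subseteq\fp_2\subseteq\fp_1$, and \Cref{constr:interc} still presents $S,S'$ as finite products of rings of the form $A_\idem{}_{\fp_j}/\fp_i$ with the maps being products of these canonical projections. What changes is that $\inter(A_\idem)$ no longer contains every interval $[\fp_i,\fp_j]$ with $i\le j$: an admissible interval $[\fraki,\fp]$ requires $\fraki^2=\fraki$, so the left endpoint must be an idempotent prime, i.e.\ $\idem(i)=1$ or $i=0$. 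This is exactly why the case distinction in the statement is more delicate than the clean subset/gluing dichotomy.

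The structure of the argument I would give: first reduce, as in \Cref{prop:minimal_fact}, to $S=\tilde S\times Q$, $S'=\tilde S\times P$ where $\tilde S$ collects the common factors indexed by $\cI(f)\cap\cI(g)$, and $p=(\mathrm{id}_{\tilde S},\phi)$ with $\phi\colon P\to Q$ (absorbing the $\tilde S$-component of $\phi$ by the same universal-property manipulation). So everything is controlled by $\phi$ and, via \Cref{lem:contracting}, by how the single ``new'' factor(s) of $P$ partition into chains of admissible intervals mapping onto the factors of $Q$. The key structural claim — mirroring the $P$-has-one-factor, $Q$-has-at-most-two-factors analysis — is: $P$ must be a single factor $A_\idem{}_{\fp_k}/\fp_m$ with $[\fp_m,\fp_k]\in\cI(g)\setminus\cI(f)$ (if $P$ had two factors, splitting off one of them gives a strictly finer factorization, contradicting minimality, exactly as before; this part uses nothing about idempotence), and $Q$ consists of the factors indexed by $\cI(f)\setminus\cI(g)$, which must all be ``squeezed inside'' $[\fp_m,\fp_k]$ by \Cref{lem:contracting}. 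Minimality then forces $Q$ to have at most two factors by the three-factor obstruction of \Cref{fig:generalKellerProofGlueingTwice} (whose maps still exist here, since the intermediate intervals appearing there have left endpoints among $\{m_1,m_2,m_3\}$ which are themselves left endpoints of intervals in $\cI(f)$, hence idempotent or $0$).

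The genuinely new point — and the main obstacle — is the \emph{case analysis on $|Q|$} when we cannot freely insert intermediate idempotent primes. If $Q$ has two factors indexed by $[\fp_{m_1},\fp_{k_1}]$ and $[\fp_{m_2},\fp_{k_2}]$ with $m\le m_1\le k_1<m_2\le k_2\le k$, then in the idempotent case one argued $m=m_1$, $k_2=k$, and $m_2=k_1+1$ using that every intermediate value gives a splitting. Now a splitting at an intermediate index $j$ requires $\fp_j$ to be admissible as a left endpoint, i.e.\ $\idem(j)=1$; so we can only conclude that there is \emph{no idempotent prime strictly between}. But $[\fp_{m_1},\fp_{k_1}],[\fp_{m_2},\fp_{k_2}]\in\cI(f)$ and $\cI(f)$ is a chain in $\inter(A_\idem)$, meaning the ``gap'' $[\fp_{k_1+1},\fp_{m_2-1}]$ (if nonempty) must itself be coverable, which via (C1)–(C3) and the chain condition forces $m_2=k_1+1$ regardless; similarly $m=m_1$ and $k=k_2$ hold because $[\fp_m,\fp_k]$ being an admissible interval means $\fp_m$ is idempotent, and any strictly larger choice of $m$ would not be the left endpoint of something in $\cI(g)$. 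If $Q$ has a single factor $[\fp_{m_1},\fp_{k_1}]$ with $[\fp_m,\fp_k]\ne[\fp_{m_1},\fp_{k_1}]$, then either $m<m_1$ or $k_1<k$ (or both). When $m<m_1$ with $\fp_m$ idempotent one can still factor $\phi$ through $k(\fp_m)\times Q$, contradicting minimality — \emph{unless} $m_1=m$ is forced — while the asymmetric possibility $k_1<k$ with $\fp_{m}$ the only admissible left endpoint available (so $m=m_1=k_1$ is impossible but $m = m_1$, $k_1 = k-1$, $\fp_k$ \emph{not} idempotent) produces exactly the middle branch $I=\{[m,k-1]\}$: here one cannot split $A_\idem{}_{\fp_k}/\fp_m\to A_\idem{}_{\fp_{k-1}}/\fp_m\times\cdots$ further because there is no idempotent prime available to be the left endpoint of the complementary piece $[\cdot,\fp_k]$, so the minimal factorization genuinely ``forgets'' only the top prime $\fp_k$. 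Finally, one checks the converse — that each of the three listed configurations does yield a minimal factorization — by rerunning the $\eqref{eqn:cDiagramGeneralizedKellerProof3}$-style argument: given $[g]\le[h]\le[f]$ one shows $T\in\{S,S'\}$ because any intermediate $T$ would correspond to a partition of $[\fp_m,\fp_k]$ into admissible intervals strictly between the trivial partition and $I$, and no such partition exists precisely by the constraints encoded in the three cases.
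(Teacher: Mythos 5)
The overall architecture of your argument matches the paper's: reduce to $S=\tilde S\times Q$, $S'=\tilde S\times P$ with $p=(\mathrm{id}_{\tilde S},\phi)$, argue $P$ is a single factor and $Q$ has at most two, then case-split and run the $[g]\le[h]\le[f]$ converse check. That is precisely what the paper does (reducing to the proof of \cref{prop:minimal_fact} and adding the idempotence bookkeeping). You also correctly localise where the extra care is needed, and your identification of the middle branch as the ``$\fp_k$ not idempotent'' case is, if anything, more precise than the stated hypothesis $[k,k]\notin\cI(f)$ (the latter is automatic given the setup, whereas the proof of \cref{thm:smashing_spectrum_mixed} uses $\idem(k)=0$, which is the real constraint; in $A^{\star 2}$ with $\cI(f)=\{[(0),\fp_1]\}$, $\cI(g)=\{[(0),\fm]\}$, the hypothesis $[k,k]\notin\cI(f)$ holds yet $[g]\le[f]$ is not minimal, as \cref{fig:n=2} shows).

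However, there is a genuine gap in your argument for the two-factor case. You assert that ``a splitting at an intermediate index $j$ requires $\fp_j$ to be admissible as a left endpoint, i.e.\ $\idem(j)=1$'', and then try to repair this by claiming that (C1)--(C3) force the gap $[\fp_{k_1+1},\fp_{m_2-1}]$ to be empty. Both steps are wrong. First, the interposing object one actually uses is obtained by \emph{extending a right endpoint}, e.g.\ replacing $[m_1,k_1]$ with $[m_1,k_1+1]$ to form the chain $\cI(h)=\{[m_1,k_1+1],[m_2,k_2]\}\cup(\cI(f)\cap\cI(g))$ whenever $k_1+1<m_2$. The only left endpoints involved are $\fp_{m_1},\fp_{m_2}$, which are already known to be idempotent since $[m_1,k_1],[m_2,k_2]\in\cI(f)$; no idempotence of $\fp_j$ for intermediate $j$ is required. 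This is exactly the mechanism behind \cref{fig:generalKellerProofGlueingTwice} and it carries over to $A_\idem$ unchanged, which is why the paper's proof simply refers back to the gluing rule. Second, (C1)--(C3) emphatically do \emph{not} force $m_2=k_1+1$: admissible chains may have gaps (compare $\cI=\{[(0),(0)],[\fp_4,\fp_4]\}$ in \cref{ex:mixed}), so (C3) only guarantees consecutive \emph{elements of the chain}, not contiguity of the underlying intervals in $\Spec(R)$. The correct conclusion $m_2=k_1+1$, $m=m_1$, $k_2=k$ therefore comes from minimality and the extension trick, not from the chain axioms; as written, your two-factor step would not survive scrutiny.
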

\begin{proof}
    Firstly, notice that \Cref{lem:contracting} always yields a factorization $[g]\le[f]$, whenever the chain $I$ exists with the claimed properties of the statement.
    Now, without loss of generality, we may assume that $S$ and $S^\prime$ are products as constructed in \Cref{constr:interc}. Assume there is a factorization $[g]\le[f]$ witnessed by $p\colon S^\prime\to S$ and let the ring $\tilde S$ correspond $\cI(f)\cap\cI(g)$, $P$ to $\cI(g)\setminus \cI(f)$, and $Q$ to $\cI(f)\setminus \cI(g)$. Then the following diagram commutes:
       \begin{eqnarray}\label{eqn:cDiagramMixedKeller}
    \begin{tikzcd}
        S^\prime=\tilde S\times P \arrow[rr, "p"] &                                              &  S=\tilde     S\times Q \\
                                          & A_{\idem}. \arrow[lu, "g"] \arrow[ru, "f"'] &                   
    \end{tikzcd}
    \end{eqnarray}
    Moreover, $P$ or $Q$ is non-zero, and $P\neq Q$. We have two cases, in each of which we will show that $p$ is minimal if and only if the chain $I$ exists as claimed.
    \begin{enumerate}
        \item $Q=0$. Then $\cI(f)\subsetneq\cI(g)$, and $P\neq0$. We now proceed just as in the proof of the subset rule (\Cref{prop:minimal_fact}). So $p$ is minimal if and only if $m=k$, which covers the first case of $I=\varnothing$.
        \item $Q\neq0$. Since $Q\neq P$, we must have $\cI(f)\not\subseteq\cI(g)$. Similarly, we can not have $\cI(f)\supseteq\cI(g)$, as such inclusion would contradict to our findings in the previous case. In particular, $P\neq0$. As in the proof of the gluing rule in \Cref{prop:minimal_fact}, $P$ has at most one factor, and $Q$ at most two. Moreover, we may assume $p$ can be written as $(\mathrm{id}_{\tilde S},\phi)$, where $\phi\colon P\to Q$ is the canonical map. Since $P,Q\neq0$, \Cref{lem:contracting} allows us to write 
         \[
    P=(A_{\idem})_{\fp_{k}}/\fp_{m}, Q=(A_{\idem})_{\fp_{k_1}}/\fp_{m}\times \tilde Q,\enspace m\le k_1\le k\le n, m\neq k.
    \]
    Here $\tilde Q$ might be $0$, or a single non-zero factor. We claim that the factorization witnessed by $p$ is minimal if and only if
    \[
    \tilde Q=\begin{cases}
        0, &\mbox{ if }k-1=k_1,[k,k]\not\in\cI(f),\\
        (A_{\idem})_{\fp_k}/\fp_{k_1+1}, &\mbox{ if }m\le k_1<k, [k_1+1,k]\in\cI(f). 
        
    \end{cases}
    \]
    The case $k-1=k_1,[k,k]\not\in\cI(f)$ is immediate, as there is only one factorization $p$ by \Cref{lem:contracting}, and $\tilde Q=0$. Hence, the case where $I=\{[m,k-1]\}$. On the other hand, if $m\le k_1<k, [m,k_1],[k_1+1,k]\in\cI(f)$, the statement follows from a similar argument as in the proof of the gluing rule in \Cref{prop:minimal_fact}. In particular, $I=\{[m,k_1],[k_1+1,k]\}$.\qedhere\end{enumerate}
\end{proof}
With the description of minimal inclusions in $\home(A_\idem)$ by \Cref{prop:minimal_fact_mixed}, we will now generalise \Cref{thm:smashing_spectrum_n} and classify the smashing spectrum of $\sfD(A_\idem)$. The following definitions will provide us with the language that we will require to do this.
\begin{definition}\label{def:next}
    Let $R$ be a valuation domain of finite Krull dimension $n$ with chain of prime ideals $(0)\subsetneq\fp_1\subsetneq\hdots \subsetneq\fp_n$. Define the map $\Next \colon \N_{\le n} \to \N_{\le n}$ as
    \[
    \Next(i)\coloneqq\begin{cases}
    k, &\mbox{ if }\fp_i\subsetneq\fp_k, \idem(k)=1,\mbox{ and }\idem(j)=0\mbox{ for all }i<j<k,\\
    n, &\mbox{ otherwise}.
    \end{cases}     
    \]
    We denote the $k$-fold iteration of this self-map as $\Next^k$ and let $\Next^k_0\coloneqq\Next^k(0)\in\N$ with the convention that $\Next^0_0=0$. Finally we define
    \begin{align*}
        M\coloneqq&\max\{i\mid \idem(i)=1\}\ge0, \\
        d\coloneqq&\max\{n-M,\vert \Next_0^{k+1}-\Next_0^k\vert\colon 0\le k<n\}.
    \end{align*}
\end{definition}

Informally speaking, $\Next(i)$ returns the minimal index above $i$, such that its corresponding prime ideal is idempotent, unless there is none, in which case the map returns $n$. The value $M$ is the maximal index among all idempotent prime ideals, and $d$ is the maximal amount of subsequent non-idempotent prime ideals.

\begin{theorem}\label{thm:smashing_spectrum_mixed}
    Let $R$ be a valuation domain with finite Krull dimension $n$. In notation of \Cref{def:next}, let $[m,k]$ denote the interval $[\fp_m,\fp_k]$. Then $\Spc^\mathrm{s}(\sfD(R))$ can be described as 
    \begin{eqnarray*}
\begin{tikzcd}[style={every outer matrix/.append style={draw=black, inner xsep=6pt , inner ysep=9pt, rounded corners, very thick}},row sep=0.43cm,column sep=0.3cm]
{[M,M]}& {[\Next_0^2,\Next_0^2]} &&& {[\Next_0^1,\Next_0^1]} &&  {[0,0]} \\
\hdots\arrow[u, rightsquigarrow]  \arrow[ru, rightsquigarrow] & {[\Next_0^1,\Next_0^2]}\arrow[u, rightsquigarrow]\arrow[r, rightsquigarrow]&  \hdots\arrow[r, rightsquigarrow]  & {[\Next_0^1,\Next_0^1+1]}\arrow[ru, rightsquigarrow]& {[0,\Next_0^1]}\arrow[u, rightsquigarrow]\arrow[r, rightsquigarrow]
& \hdots\arrow[r, rightsquigarrow] & {[0,1]}\arrow[u, rightsquigarrow] \\
{[M,M+1]}\arrow[uu, rightsquigarrow, bend left=35]&\hdots\arrow[l, rightsquigarrow]&{[M,n]}\arrow[l, rightsquigarrow]&&&&
\stepcounter{equation}\addtocounter{equation}{-1}{(\theequation)}\addtocounter{equation}{-1}\refstepcounter{equation}\label{eqn:smashing_spectrum_mixed}
\end{tikzcd}
    \end{eqnarray*} 
where the arrows signify
specialization closure.
In particular, the dimension of $\Spc^\mathrm{s}(\sfD(R))$ is $d+1$, and 
\[
\vert \Spc^\mathrm{s}(\sfD(R))\vert = \sum_0^n\idem(i)+\sum_{\idem(i)=1}\left(\Next_0^{i+1}-\Next_0^i\right)+n-M.
\]
\end{theorem}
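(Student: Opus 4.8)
The plan is to follow the same two-step strategy that proved \Cref{thm:smashing_spectrum_n}: first identify the meet-prime elements of $\Sm(\sfD(R))$ — equivalently, by \Cref{rem:frameordering}, the join-prime elements of $\interc(R)\cup\{\varnothing\}$ — and then determine the specialization order among them. By \Cref{prop:meet_and_join}, a chain $\cI$ is join-prime only if it cannot be written as $\cI_1\lor\cI_2$ nontrivially, which forces $\cI$ to be a singleton $\{[m,k]\}$ (or we must separately rule out $\varnothing$, which is the bottom of the frame and hence not join-prime under our conventions). So the real work is to decide, for which pairs $0\le m\le k\le n$, the singleton $\{[\fp_m,\fp_k]\}$ is minimally included in exactly one element of $\interc(R)\cup\{\varnothing\}$. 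This is governed entirely by \Cref{prop:minimal_fact_mixed}: a minimal factorization $[g]\le[f_{(\{[m,k]\})}]$ corresponds to a chain $I$ of the three prescribed shapes. The case $m=k$ always gives precisely one minimal inclusion (the one with $I=\varnothing$, coming from $\varnothing$), provided $[\fp_m,\fp_m]$ is a legitimate admissible interval, i.e. $\fp_m=\fp_m^2$, i.e. $\idem(m)=1$; this yields the top row points $[M,M],[\Next_0^2,\Next_0^2],\dots,[0,0]$, one for each idempotent index. For $m<k$, the middle case of \Cref{prop:minimal_fact_mixed} forces the partition $\{[m,k_1],[k_1+1,k]\}\subseteq\cI(f)$, and requiring that there be a \emph{unique} such $k_1$ together with admissibility constraints ($\fp_m=\fp_m^2$, $\fp_{k_1+1}=\fp_{k_1+1}^2$) plus the second (``$k-1=k_1$'') case pins down exactly the intervals appearing in rows two and three of \eqref{eqn:smashing_spectrum_mixed}: intervals of the form $[\Next_0^i,\Next_0^i+1]$ up through $[\Next_0^i,\Next_0^{i+1}]$, and the ``long'' intervals $[M,M+1],\dots,[M,n]$ hanging off $[M,M]$.

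Concretely, I would argue as follows. Fix $m<k$ and suppose $\{[\fp_m,\fp_k]\}$ is join-prime. By the gluing part of \Cref{prop:minimal_fact_mixed}, any minimal inclusion arises from a $k_1$ with $m\le k_1\le k$ subject to the admissibility of the resulting partition $I$; since we need exactly one such inclusion, there can be only one admissible choice of $k_1$. If $\idem(m)=1$ and the only idempotent index in $[m,k]$ strictly above $m$ is reached by a single ``jump'', then $m=\Next_0^i$ for some $i$ and $k$ ranges over $\Next_0^i+1,\dots,\Next_0^{i+1}$; here the uniqueness of $k_1$ comes from the definition of $\Next$, since the only place one may legitimately cut $[\Next_0^i,k]$ into two admissible subintervals is at $k_1=\Next_0^i$ when $k<\Next_0^{i+1}$ (forcing $I=\{[\Next_0^i,\Next_0^i+\cdots]\}$... — more precisely the ``$k-1=k_1$'' case covers $k=\Next_0^i+1$ and the middle case with $k_1=\Next_0^i$ covers the rest up to $k=\Next_0^{i+1}$). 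For $m=M$ there are no idempotent indices above $M$, so the long intervals $[M,M+1],\dots,[M,n]$ each have a \emph{unique} admissible cut as well, producing the bottom-left tail; this is exactly where the quantity $n-M$ enters. Collecting: the join-primes are the $\sum_{i=0}^n\idem(i)$ singletons $\{[\fp_j,\fp_j]\}$ with $\idem(j)=1$, the $\sum_{\idem(i)=1}(\Next_0^{i+1}-\Next_0^i)$ singletons of the form $\{[\Next_0^i,\ell]\}$ with $\Next_0^i<\ell\le\Next_0^{i+1}$, and the $n-M$ long intervals off $[M,M]$; summing gives the stated cardinality.

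For the topology, I would repeat verbatim the argument from the proof of \Cref{thm:smashing_spectrum_n}: it suffices to show $\cI\rightsquigarrow\cJ$ if and only if $[f_{(\cI)}]\le[f_{(\cJ)}]$ in $\home(A_\idem)$, which follows from the definition of the basic opens $D(a)=\{p\mid a\not\le p\}$ exactly as before (if $\sfI\not\rightsquigarrow\sfJ$ there is an $a$ with $\sfJ\in D(a)$, $\sfI\notin D(a)$, so $a\le\sfI\le\sfJ$, contradiction). The order $[f_{(\cI)}]\le[f_{(\cJ)}]$ is then read off from \Cref{lem:contracting}: $[f_{(\{[m,k]\})}]\le[f_{(\{[m',k']\})}]$ iff $m\le m'\le k'\le k$ (after accounting for the order-reversal between $\home$ and $\interc$), and one checks this reproduces precisely the arrows drawn in \eqref{eqn:smashing_spectrum_mixed}: each middle-row interval $[\Next_0^i,\ell]$ specializes up to the closed point $[\Next_0^{i+1},\Next_0^{i+1}]$ on its left and (via the chain of inclusions $[\Next_0^i,\ell]\rightsquigarrow[\Next_0^i,\ell-1]$) the right-hand neighbor, and the long intervals $[M,n]\rightsquigarrow\cdots\rightsquigarrow[M,M+1]\rightsquigarrow[M,M]$ all specialize into the single closed point $[M,M]$, giving the bent arrow in the picture. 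Finally the dimension: a maximal specialization chain of open points has length equal to the longest run $[m,k]\rightsquigarrow[m,k-1]\rightsquigarrow\cdots\rightsquigarrow[m,m+1]$, i.e. $k-m$, maximized at $\Next_0^{i+1}-\Next_0^i$ over idempotent $i$ and at $n-M$ for the long tail; adding one for the terminal closed point gives $d+1$.

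\textbf{Main obstacle.} The delicate point is the uniqueness bookkeeping: showing that for the claimed list of intervals the minimal inclusion is genuinely unique, and — conversely — that no \emph{other} singleton $\{[\fp_m,\fp_k]\}$ is join-prime, because some such singletons admit two distinct admissible cuts $k_1$ and hence two minimal inclusions. This requires carefully juggling the three cases of \Cref{prop:minimal_fact_mixed} against the admissibility constraint $\fraki^2=\fraki$ on the left endpoints of the partition pieces, i.e. against the pattern of $\idem$; the function $\Next$ is precisely engineered to make this clean, but verifying that it does is the crux. Everything else — the frame-theoretic translation and the specialization-order computation — is formal and parallels the $\idem\equiv1$ case already treated.
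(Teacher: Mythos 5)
Your proposal follows the paper's strategy very closely: reduce to $A_\idem$, identify the join-prime elements of $\interc(A_\idem)\cup\{\varnothing\}$ as the singletons $\{[m,k]\}$ with exactly one minimal inclusion (using \Cref{prop:minimal_fact_mixed}), and then read off the specialization order from the basic opens exactly as in the proof of \Cref{thm:smashing_spectrum_n}. This is also what the paper does, so the overall route and the conclusions (which singletons survive, the cardinality count, the dimension bound) match.

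There is, however, a concrete bookkeeping error in your middle paragraph. You write that for $m=\Next_0^i$ ``the only place one may legitimately cut $[\Next_0^i,k]$ into two admissible subintervals is at $k_1=\Next_0^i$'' and that ``the `$k-1=k_1$' case covers $k=\Next_0^i+1$ and the middle case with $k_1=\Next_0^i$ covers the rest up to $k=\Next_0^{i+1}$.'' This is inverted. Splitting $[m,k]$ at $k_1$ produces $[m,k_1]$ and $[k_1+1,k]$, and the right-hand piece is admissible only if $\idem(k_1+1)=1$. With $m=\Next_0^i$, the sole idempotent index in $(\Next_0^i,\Next_0^{i+1}]$ is $\Next_0^{i+1}$, so the only potential gluing cut is at $k_1=\Next_0^{i+1}-1$, and it is available only when $k=\Next_0^{i+1}$. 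For $\Next_0^i<k<\Next_0^{i+1}$ one has $\idem(k)=0$ and there is \emph{no} admissible cut at all; the unique minimal inclusion comes from the truncation (second) case with $\cI(f)=\{[m,k-1]\}$. Setting $k_1=\Next_0^i$ would require $\idem(\Next_0^i+1)=1$, i.e.\ $\Next_0^{i+1}=\Next_0^i+1$, in which case there is no ``rest up to $\Next_0^{i+1}$'' anyway. With this correction the case analysis and the uniqueness claim do go through and the remainder of your outline matches the paper. You should also say explicitly, when ruling out other singletons, that a non-join-prime $\{[m,k]\}$ fails because it admits both the truncation inclusion (when $\idem(k)=0$) and a gluing inclusion at some interior idempotent (or two distinct gluing inclusions when $\idem(k)=1$); this is the ``converse'' direction you flag as the crux but do not carry out.
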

\begin{proof}
    Without loss of generality, we replace $R$ by $A_\idem$, for a suitable map $\idem\colon \N_{\le n}\to\F_2$. The proof of the specialization closures is identical to its proof in \Cref{thm:smashing_spectrum_n}. Hence, $\cI\rightsquigarrow\cJ$ if and only if $[f_{(\cI)} ]\le[f_{(\cJ)}]$ in $\home(A_\idem)$. Likewise, the meet-prime elements in $\Sm(\sfD(A_\idem))$ correspond to singletons $\cI\in\interc(A_\idem)$, which are minimally included in one element. For $0\le m\le k\le n$ and $\idem(m)=1$, let $\cI=\{[m,k]\}$. Then \Cref{prop:minimal_fact_mixed} classifies all minimal inclusions. 
    
    \indent
    If  $m=k$, the subset rule, i.e., the first case in \Cref{prop:minimal_fact_mixed}, witnesses $\cI$ corresponding to a meet-prime in $\Sm(\sfD(A_\idem))$. This explains the points of the form $[m,m]$, for which $\idem(m)=1$. 
    Now let $m\le k_1\le k$, $m\neq k$. If $k_1=k-1$, there is a minimal inclusion $\cI\ge\{[m,k-1]\}$ in $\interc(A_\idem)\cup\{\varnothing\}$ if and only if $\idem(k)=0$. Thus, by the order-reversing of the bijection in \Cref{thm:bijectionsofframes}, we see that $\{[m,k]\}$ corresponds to a meet-prime, whenever $m<k,\idem(k)=0$. Finally, in the case $m\le k_1<k$, there is a minimal inclusion $\cI\ge\{[m,k_1],[k_1+1,k]\}$. Hence, $\cI$ is a corresponds to a meet-prime if and only if there is one such possible choice for $m\le k_1<k$. In general, there are $k-m$ choices, but each non-idempotent prime in between $m+1\le k$ reduces the amount of choices by one. Therefore, the singletons of the form $\{[\Next_0^\ell,\Next_0^{\ell+1}]\}$ correspond to meet-primes in $\Sm(\sfD(A_\idem)$. This concludes the proof of the meet-primes of $\Sm(\sfD(A_\idem))$, and the arrows follow from \Cref{lem:contracting} using $\cI\rightsquigarrow\cJ$ if and only if $[f_{(\cI)} ]\le[f_{(\cJ)}]$ in $\home(A_\idem)$.

    \indent
     The claimed dimension and formula for the number of points in $\Spc^\mathrm{s}(\sfD(R))$ then follow from a simple count using the chains.
\end{proof}
\addtocounter{thm}{1}
\begin{remark}
    Similar to \cref{cor:counting}, it is possible to write down a formula for $|\Sm(\sfD(A_{\idem}))|$. Once again, this can be achieved by considering those elements of $\interc(A_{\idem})$ of a given size. For example, the number of elements of size two is given by
    \[
    \sum_{i_1 \mid \idem(i_1)=1} \, \sum_{\substack{i_2 \mid \idem(i_2)=1\\i_1 < i_2}} (i_2-i_1)(n+1-i_2).
    \]
    However, we have been unable to obtain a simple closed formula for $|\Sm(\sfD(A_{\idem}))|$.
\end{remark}

\begin{example}\label{ex:mixed}
    Consider the map $\idem\colon\N_{\le 5}\to \F_2$, which only maps $0$ and $4$ to $1$. By \Cref{prop:mixedKeller} we obtain a valuation domain $A=A_{\idem}$ with 6 prime ideals, of which only $(0)$ and $\fp_4$ are idempotent in the chain of all prime ideals of $R$: $(0)\subset \fp_1\subset \fp_2\subset \fp_3\subset \fp_4\subset\fm$.
    By \Cref{thm:smashing_spectrum_mixed} the smashing spectrum is given as follows:
        \[
\begin{tikzcd}[style={every outer matrix/.append style={draw=black, inner xsep=6pt , inner ysep=9pt, rounded corners, very thick}},row sep=0.4cm,column sep=0.3cm]
 & \{[\fp_4,\fp_4]\} &&&&&  \{[(0),(0)]\} \\
 \{[\fp_4,\fm]\} \arrow[ru, rightsquigarrow] &&  \arrow[lu, rightsquigarrow]  \{[(0),\fp_4]\} \arrow[r, rightsquigarrow]  & \{[(0),\fp_3]\}\arrow[r, rightsquigarrow]& \{[(0),\fp_2]\}\arrow[r, rightsquigarrow]
& \{[(0),\fp_1]\}\arrow[ru, rightsquigarrow] & \end{tikzcd}
\]

Using \Cref{thm:bijectionsofframes} and applying \Cref{constr:interc} to $A$, we can identify all elements in $\interc(A)$ and $\home(A)$ in \Cref{tab:mixedExample}. Adopting the notation used in \Cref{ex:keller=2} for \Cref{fig:n=2}, we end up with the smashing frame $\Sm(\sfD(A))$ as depicted in \Cref{fig:mixedExample}.
\begin{figure}[H]\TopFloatBoxes
\begin{floatrow}
\centering
\ffigbox{
\begin{tikzcd}[style={every outer matrix/.append style={draw=black, inner xsep=6pt , inner ysep=9pt, rounded corners, very thick}},row sep=0.38cm,column sep=0.15cm,ampersand replacement=\&]
\Circled{0}        \&                                                                 \&                                            \\
\Circled[inner color=purple]{Q} \arrow[u]        \&  {\color{purple}k(\fp_4)} \arrow[lu]            \&                                            \\
\Circled[inner color=purple]{A_{\fp_1}} \arrow[u]\& Q\times k(\fp_4) \arrow[lu]\arrow[u]         \& {\color{purple}A/\fp_4} \arrow[lu]           \\
\Circled[inner color=purple]{A_{\fp_2}} \arrow[u]\& A_{\fp_1}\times k(\fp_4) \arrow[lu]\arrow[u]  \& Q\times A/\fp_4 \arrow[lu]\arrow[u]         \\
\Circled[inner color=purple]{A_{\fp_3}} \arrow[u]                    \& A_{\fp_2}\times k(\fp_4) \arrow[lu]\arrow[u]  \& A_{\fp_1}\times A/\fp_4 \arrow[lu]\arrow[u]  \\
                                                 \& A_{\fp_3}\times k(\fp_4)  \arrow[lu]\arrow[u]  \& A_{\fp_2}\times A/\fp_4 \arrow[lu]\arrow[u]  \\
                                                 \& \Circled[inner color=purple]{A_{\fp_4}}\arrow[u]                                   \& A_{\fp_3}\times A/\fp_4 \arrow[lu]\arrow[u] \\
                                                 \&                                                                 \& \Circled{A}  \arrow[lu]\arrow[u]                                        
\end{tikzcd}}
{%
  \caption{$\Sm(\sfD(A))$ with the notation of \Cref{fig:n=2}.}\label{fig:mixedExample}
}
\capbtabbox{%
\begin{tabular}{||P{.45\textwidth}||}
\hline
\begin{tabular}{P{.19\textwidth}|P{.26\textwidth}}
$\cI$ &Repr.\ in $\home(A)$
\end{tabular}
\\
\hline
{\renewcommand{\arraystretch}{1.3}\begin{tabular}{P{.19\textwidth}|P{.26\textwidth}}
$\varnothing$ & $A\to 0$\\
$\{[(0),(0)]\}$ & $A\to Q$\\
$\{[(0),\fp_1]\}$ & $A\to A_{\fp_1}$\\
$\{[(0),\fp_2]\}$ & $A\to A_{\fp_2}$\\
$\{[(0),\fp_3]\}$ & $A\to A_{\fp_3}$\\
$\{[(0),\fp_4]\}$ & $A\to A_{\fp_4}$\\
$\{[(0),\fm]\}$ & $A\to A$\\
$\{[\fp_4,\fp_4]\}$ & $A\to k(\fp_4)$\\
$\{[\fp_4,\fm]\}$ & $A\to A/\fp_4$\\
\end{tabular}}
\\
\hline
{\renewcommand{\arraystretch}{1.3}\begin{tabular}{P{.19\textwidth}|P{.26\textwidth}}
 $\{[(0),(0)],[\fp_4,\fp_4]\}$ &  $A\to Q\times k(\fp_4)$\\
 $\{[(0),(0)],[\fp_4,\fm]\}$ &  $A\to Q\times A/\fp_4$\\
 $\{[(0),\fp_1],[\fp_4,\fp_4]\}$ &  $A\to A_{\fp_1}\times k(\fp_4)$\\
 $\{[(0),\fp_1],[\fp_4,\fm]\}$ &  $A\to A_{\fp_1}\times A/\fp_4$\\
  $\{[(0),\fp_2],[\fp_4,\fp_4]\}$ &  $A\to A_{\fp_2}\times k(\fp_4)$\\
 $\{[(0),\fp_2],[\fp_4,\fm]\}$ &  $A\to A_{\fp_2}\times A/\fp_4$\\
  $\{[(0),\fp_3],[\fp_4,\fp_4]\}$ &  $A\to A_{\fp_3}\times k(\fp_4)$\\
 $\{[(0),\fp_3],[\fp_4,\fm]\}$ &  $A\to A_{\fp_3}\times A/\fp_4$
\end{tabular}}
\\
\hline  
\end{tabular}
}{
  \caption{The empty chain along with the 16 elements of $\interc(A)$, and their representative in $\home(A)$.}\label{tab:mixedExample}
}
\end{floatrow}
\end{figure}

\end{example}
Since $A_\idem$ generalises the family of examples given by the rings $A^{\star n}$, one might ask about further relations between them and their smashing frames. We will conclude this paper with a comparison between the frames $\Sm(\sfD(A_\idem))$ and $\Sm(\sfD(A^{\star n}))$. 
\begin{theorem}\label{thm:mixed_subframe}
    Let $n\in\N_{\ge1}$, $\idem\colon \N_{\le n}\to\F_2$ be any map with $\idem(0)=1$, and $k$ any field. Let $G$ denote the value group of $A^{\star n}$, i.e., the $n$-fold lexicographic product of $\Z\left[\frac{1}{\ell}\right]$. Then the group $G_\idem$ constructed in \Cref{prop:mixedKeller} yields a canonical inclusion of $G_\idem\hookrightarrow G$, which induces a ring extension $\varphi_\idem\colon A_\idem\hookrightarrow A^{\star n}$ using  \Cref{lemma:functorialityVDconstruction}, allowing us to identify $A_\idem\subseteq A^{\star n}$.
    Moreover, the induced map on the Zariski spectra is bijective and order-preserving. 

Under this identification, $\Sm(\sfD(A_\idem))$ (resp., $\home(A_\idem)$) is a subframe of $\Sm(\sfD(A^{\star n}))$ (resp., $\home(A^{\star n})$).
\end{theorem}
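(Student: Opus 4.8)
The statement has two halves: first, that the inclusion of value groups $G_\idem \hookrightarrow G$ induces a ring extension $\varphi_\idem \colon A_\idem \hookrightarrow A^{\star n}$ realizing $A_\idem$ as a subring, with the induced spectral map a bijection; and second, that this identification makes $\Sm(\sfD(A_\idem))$ a subframe of $\Sm(\sfD(A^{\star n}))$. The first half I would handle directly via the machinery already set up: $G_\idem = \bigoplus_{i=1}^n G_i$ with $G_i \in \{\Z, \Z[\tfrac1\ell]\}$, and $G = \bigoplus_{i=1}^n \Z[\tfrac1\ell]$, so the identity-on-each-summand maps $\Z \hookrightarrow \Z[\tfrac1\ell]$ assemble into an order-preserving injection $G_\idem \hookrightarrow G$ (order-preservation is immediate from the lexicographic description in \Cref{def:lexicographicSupport}, since the support and the sign of the leading entry are unchanged). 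Applying the functor $R\colon \Ab_\le \to \CRings$ of \Cref{lemma:functorialityVDconstruction} gives $\varphi_\idem \colon A_\idem \to A^{\star n}$, which is injective because it is a restriction of the field map $Q(f)$, itself injective as a map of fields. For the spectral map: by \Cref{prop:idealsAndFilters} the primes of each ring correspond to prime filters in the respective positivity domain, and by the classification in the proofs of \Cref{prop:structureofprimes} and \Cref{prop:mixedKeller} both rings have exactly the prime filters $F_j = \{v \mid \supp(v) < j+1\}$ for $0 \le j \le n$; one checks that contracting $F_j \subset G^+$ along $G_\idem \hookrightarrow G$ returns the filter $\{v \in G_\idem^+ \mid \supp(v) < j+1\}$, giving an order-preserving bijection on Zariski spectra. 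I expect this first half to be routine bookkeeping.

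**The subframe claim.** This is the substantive part. I would argue it at the level of $\home(-)$ via the bijections of \Cref{thm:bijectionsofframes}, $\Sm(\sfD(R)) \cong \home(R) \cong \interc(R)\cup\{\varnothing\}$. Because both rings have the same prime ideal poset (a chain of length $n$) under the identification of the first half, the poset $\inter(R)$ of admissible intervals is \emph{not} the same for the two rings: an interval $[\fraki,\fp]$ is admissible only when $\fraki = \fraki^2$, so $\inter(A_\idem)$ consists precisely of those intervals $[\fp_i,\fp_j]$ with $\idem(i) = 1$, whereas $\inter(A^{\star n})$ allows all $[\fp_i,\fp_j]$ with $i \le j$. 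Thus there is an evident order-embedding $\inter(A_\idem) \hookrightarrow \inter(A^{\star n})$, and it induces an injection $\interc(A_\idem) \hookrightarrow \interc(A^{\star n})$ on chains-satisfying-(C1)--(C3): a chain of intervals each of whose left endpoint is idempotent in $A_\idem$ is in particular such a chain for $A^{\star n}$ (conditions (C1)--(C3) are phrased purely in terms of the interval poset and are inherited). The key point to verify is that this injection is a \emph{frame} map, i.e., compatible with meets and joins — equivalently, using \Cref{prop:meet_and_join}, that the operations $\cI \vee \cJ$ (merging overlapping intervals) and $\cI \wedge \cJ$ (pairwise intersections) of \Cref{lem:constructing_new_chains}, computed inside $\interc(A^{\star n})$, land back in the image of $\interc(A_\idem)$ whenever $\cI, \cJ$ do. For the meet $\cI \wedge \cJ$: intersecting $[\fp_a,\fp_b] \cap [\fp_c,\fp_d]$ keeps the \emph{larger} left endpoint, which is still idempotent, so this is clear. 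For the join $\cI \vee \cJ$: merging overlapping intervals $[\fp_a,\fp_b] \cup [\fp_c,\fp_d]$ (with $\fp_a \subseteq \fp_c \subseteq \fp_b$, say) produces $[\fp_a,\fp_d]$, whose left endpoint $\fp_a$ is idempotent — again fine. Hence the image is closed under both operations, and the map $\interc(A_\idem)\cup\{\varnothing\} \hookrightarrow \interc(A^{\star n})\cup\{\varnothing\}$ is a subframe inclusion; transporting through \Cref{thm:bijectionsofframes} gives the subframe inclusions $\home(A_\idem) \hookrightarrow \home(A^{\star n})$ and $\Sm(\sfD(A_\idem)) \hookrightarrow \Sm(\sfD(A^{\star n}))$ asserted.

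**The main obstacle.** The delicate point is not the set-theoretic closure under $\vee$ and $\wedge$ — that is the easy computation above — but rather checking that the injection $\interc(A_\idem) \hookrightarrow \interc(A^{\star n})$ genuinely \emph{realizes} the frame order and operations transported from $\Sm$, as opposed to merely being a poset inclusion. Concretely, one must confirm that the homological epimorphism $f_{(\cI)}\colon A_\idem \to (A_\idem)_{(\cI)}$ attached to a chain $\cI$ is compatible, under $\varphi_\idem$, with $f_{(\cI)}\colon A^{\star n} \to (A^{\star n})_{(\cI)}$ — i.e., that base change along $\varphi_\idem$ matches up the codomain rings $\prod_{[\fj,\fq]\in\cI} (A_\idem)_{\fq}/\fj$ and $\prod_{[\fj,\fq]\in\cI}(A^{\star n})_{\fq}/\fj$ compatibly with the direct-limit construction of \Cref{constr:interc}, and that minimal factorizations correspond. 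The cleanest route is to observe that the localizations and quotients are controlled by the prime-filter combinatorics of \Cref{prop:idealsAndFilters}, which the first half shows is preserved by the inclusion; alternatively one can invoke the explicit structural description in \Cref{prop:minimal_fact} versus \Cref{prop:minimal_fact_mixed}, whose "subset rule" and "gluing rule" for $A_\idem$ are literally the restriction of those for $A^{\star n}$ to intervals with idempotent left endpoint. Either way, once compatibility of the $f_{(\cI)}$'s is in hand, the Hasse diagram of $\home(A_\idem)$ is a full sub-Hasse-diagram of that of $\home(A^{\star n})$, and the subframe statement follows.
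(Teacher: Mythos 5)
Your proposal is essentially the paper's proof: the first half (constructing $\varphi_\idem$ via the functor of \Cref{lemma:functorialityVDconstruction}, injectivity from the field map, and the spectral bijection via the prime-filter classification) is identical, and the second half (order-embedding $\inter(A_\idem) \hookrightarrow \inter(A^{\star n})$, induced injection on $\interc(-)\cup\{\varnothing\}$, closure under the $\vee$ and $\wedge$ of \Cref{lem:constructing_new_chains}, then transport through the bijections of \Cref{thm:bijectionsofframes}) is the argument the paper gives. The ``main obstacle'' you flag is already dispensed with by \Cref{prop:meet_and_join}, which identifies the frame operations on $\interc(R)\cup\{\varnothing\}$ with the combinatorial ones of \Cref{lem:constructing_new_chains}; once $\iota$ is an injection preserving those operations (and the top and bottom, which it does since $(0)$ and $\fm$ are shared), no separate compatibility check on the epimorphisms $f_{(\cI)}$ or on minimal factorizations is needed.
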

\begin{proof}
    The constructed morphism $\varphi_\idem$ of \Cref{lemma:functorialityVDconstruction} is evidently injective, as it is the restriction of a morphism between fields. Moreover, using the explicit characterization of prime filters in $G_\idem,G$ and naturality mentioned in \Cref{lemma:functorialityVDconstruction}, it is straightforward to deduce that $\Spec(\varphi_\idem)$ sends any prime ideal $\fp_k\subset A^{\star n}$ with $\mathrm{ht}(\fp_k)=k$ to the prime ideal with height $k$ in $A_\idem$. Hence, $\Spec(\varphi_\idem)$ is an order-preserving bijection. Thus, if we denote intervals $[\fp_m,\fp_k]$ by $[m,k]$, the inverse of $\Spec(\varphi_\idem)$ induces the injection
    \[
\begin{tikzcd}[row sep=0.1cm,column sep=0.5cm]
\inter(A_\idem) \arrow[rr]  &  & \inter(A^{\star n}) \\
{[m,k]} \arrow[rr, maps to] &  & {[m,k]},            
\end{tikzcd}
    \]
which in turn induces an injection $\iota\colon\interc(A_\idem)\cup\{\varnothing\} \hookrightarrow \interc(A^{\star n})\cup\{\varnothing\}$. Moreover, one sees that the constructions $\cI\wedge \cJ,\cI\lor\cJ$ of \Cref{lem:constructing_new_chains} yield operations in $\interc(A_\idem)\cup\{\varnothing\}$. 
Therefore, the injection $\iota$ preserves meets and joins. Hence, $\iota$ also preserves the order, since $\cI\le\cJ$ if and only if $\cI\wedge\cJ=\cI$. Conclusively, $\iota$ is a monomorphism of frames, stemming from the identification of $\varphi_\idem$. As such, the order-reversing and order-preserving bijections of \Cref{thm:bijectionsofframes} yield the final claims.
\end{proof}

\bibliography{bibliography}\bibliographystyle{alpha}
\end{document}